\def\z{\mathfrak{z}}
\def\u{\mathfrak{u}}
\def\g{\mathfrak{g}}
\def\h{\mathfrak{h}}
\def\n{\mathfrak{n}}
\def\v{\mathfrak{v}}
\def\C{\mathbb{C}}
\def\R{\mathbb{R}}
\def\ad{\operatorname{ad}}
\def\dim{\operatorname{dim}}
\def\alt{\raise1pt\hbox{$\bigwedge$}}
\def\pint{\langle \cdotp,\cdotp \rangle }
\def\la{\langle}
\def\ra{\rangle}
\newcommand\ggo{{\mathfrak g}}
\newcommand{\iprod}{\mathbin{\lrcorner}}
\def\Im{\operatorname{Im}}
\theoremstyle{plain}
\newtheorem{teo}{\bf Theorem}[section]
\newtheorem{cor}[teo]{\bf Corollary}
\newtheorem{prop}[teo]{\bf Proposition}
\newtheorem{lema}[teo]{\bf Lemma}
\theoremstyle{definition}
\newtheorem{ejemplo}[teo]{\bf Example}
\theoremstyle{remark}
\newtheorem{rem}[teo]{\bf Remark}
\newcommand\aff{\mathfrak{aff}}
\title{A survey on invariant conformal killing forms on Lie groups}
\author{A. Herrera}
\address{FCEFyN, Universidad Nacional de C\'{o}rdoba, Ciudad Universitaria, 5000 C\'{o}rdoba, Argentina}
\email{cecilia.herrera@unc.edu.ar}
\author{M. Origlia}
\address{FaMAF-CIEM, Universidad Nacional de C\'{o}rdoba, Ciudad Universitaria, 5000 C\'{o}rdoba, Argentina}
\email{marcos.origlia@unc.edu.ar}
\date{}
\begin{document}

\begin{abstract}
In this survey we review recent results on left-invariant conformal Killing $p$-forms on Lie groups endowed with a left-invariant metric. We also mention interesting open questions that could lead into future research.
\end{abstract}

\maketitle
\tableofcontents

\section{Introduction}

Killing forms appeared as a generalization of Killing vector fields; they were introduced by  Yano in 1951 (see \cite{Yano1}). Yano considered a $p$-form  defined on a Riemannian manifold $(M,g)$ and extended the notion of Killing vector field to this case, that is, a $p$-form  $\eta$  is Killing  if it satisfies the following equation
\begin{equation}\label{yanoequationoriginal}
\nabla\eta(X_1,X_2,\dots,X_{p+1})+\nabla \eta(X_2,X_1,\dots,X_{p+1})=0, 
\end{equation}
for all vector fields $X_i$, \color{black}where $\nabla$ denotes the Levi-Civita connection associated to the metric $g$.  
In 1968, Tachibana renamed \eqref{yanoequationoriginal} in  \cite{Tachibana1} as the Killing-Yano equation. One year later in \cite{Tachibana},  the same author extended the concept of Killing $2$-forms to conformal Killing $2$-forms. In the same year, Kashiwada continued this generalization and defined conformal Killing (or conformal Killing-Yano) $p$-forms for  $p\geq 2$ (\cite{Kashiwada}). A $p$-form $\eta$  is called conformal Killing  if it exists a $(p-1)$-form $\theta$ such that the following equation is satisfied
\begin{align}\nonumber\label{Tachibanaequationoriginal}
		\nabla \eta(X_1,X_2,\dots,X_{p+1})+\nabla& \eta(X_2,X_1,\dots,X_{p+1})=2g(X_1, X_2) \theta\left(X_3 , \dots , X_{p+1}\right)\\
		&  -\displaystyle{\sum_{i=3}^{p+1}}(-1)^i g\left(X_1,X_i\right)\theta\left(X_1, X_3 ,\dots , \widehat{X_i} ,\dots, X _{p+1} \right) \\
		& -\displaystyle{\sum_{i=3}^{p+1}}(-1)^i g\left(X_2,X_i\right)\theta\left(X_2, X_3 ,\dots, \widehat{X_i}, \dots , X _{p+1}\right) \nonumber
\end{align} 
for $X_1,X_2,\dots,X_{p+1}$ vector fields and where $\widehat{X_i}$ means that $X_i$ is omitted. We will denote conformal Killing or conformal Killing-Yano forms by CKY for short, similarly KY will mean Killing or Killing-Yano. We will call \textit{strict} CKY to CKY forms which are not KY.

An intuitive example of Killing $2$-forms are given by nearly K\"ahler manifolds $(M,g,J)$ whose fundamental $2$-form $\omega$ given by $\omega(X,Y)=g(JX,Y)$ is Killing. Similarly, 
the canonical $2$-form of a Sasakian manifold is a strict CKY $2$-form (\cite{Semmelmann}).

The matematical development of these forms has  taken interest in the last 25 years, because they are considered a powerful tool in the general relativity and supersymmetric quantum field theory. We recommend to see \cite{O.Santillan} for more details.

In 2001, S. E. Stepanov motivated by the relationship between the Maxwell equations of relativistic electrodynamics and conformal Killing 2-forms, studied the geometry of the space of conformal Killing  $p$-forms in \cite{Stepanov}. Then in 2003, U. Semmelman in \cite{Semmelmann} introduced a different point of
view, he described a conformal Killing $p$-form as a form in the kernel of  a first order elliptic differential operator. 
Equivalently, a $p$-form $\eta$ is  conformal Killing  on  a $n$-dimensional Riemannian manifold $(M,g)$ if it satisfies 
for any vector field $X$ the following equation
\begin{equation}\label{ckyManifolds}
	\nabla_X  \eta=\frac{1}{p+1}X \lrcorner\mathrm{d}\eta-\frac{1}{n-p+1}X^*\wedge \mathrm{d}^*\eta,
\end{equation}
where $X^*$ is the dual 1-form of $X$,  $\mathrm{d}^*$ is the co-differential operator 
and $\lrcorner$ is the contraction. 
If $\eta$ is co-closed, i.e. $\mathrm{d}^*\eta=0$, then \eqref{ckyManifolds} is equivalent to \eqref{yanoequationoriginal} and $\eta$ is a Killing-Yano $p$-form. If instead $\eta$ is
closed, i.e. $\mathrm{d}\eta=0$, then Semmelmann called such $\eta$ a $*$-Killing $p$-form.
Note that the Hodge $*$-operator 
applied to a conformal Killing $p$-form determines a  conformal Killing $(n-p)$-form on $M$. In particular, the Hodge $*$-operator interchanges closed and co-closed conformal Killing forms on $M$. 

Semmelmann proved that a conformal $p$-form ($p  \neq  3, 4$) on a compact manifold with holonomy $G_ 2$  is parallel. He also showed that the vector space of CKY $p$-forms on a $n$-dimensional connected Riemannian manifold has dimension at most ${n+2}\choose{p+1}$; and there are no conformal Killing forms on compact
manifolds of negative constant sectional curvature.

For a compact simply connected symmetric space $M$,  it was shown in  \cite{BMS} that $M$ admits a non-parallel Killing $p$-form, $p \geq 2$, if and only if it is isometric to a Riemannian product $ S^ k \times N$, where $S^ k$ is a round sphere and $k > p$. Another important result states that every Killing-Yano $p$-form on a compact quaternion K\"ahler manifold is parallel for any $p \geq 2 $, see \cite{Moroianu-semmelmann}. In \cite{Moroianu-semmelmann-08} a description of conformal Killing $p$-forms on a compact Riemannian product was given, proving that such a form is a sum of forms of the following types: parallel forms, pull-back of Killing-Yano forms on the factors, and their Hodge duals.

In order to look for new examples of Riemannian manifolds $(M,g)$ carrying conformal Killing $p$-forms, a good tool is to consider the case when the Riemannian manifold is a Lie group endowed with a left-invariant metric. If we focus on left-invariant forms, then this invariant requirement makes \eqref{yanoequationoriginal} (or \eqref{ckyManifolds}) look more tractable.

Most of the work done about invariant CKY forms is focused on $2$-forms on $2$-step nilpotent Lie groups, flat Lie groups, and Lie groups with bi-invariant metrics.
We briefly explain the main works by chronological order.   

In 2012, Barberis, Dotti and Santill\'an began to study left-invariant Killing 2-forms on Lie groups with left-invariant metric in \cite{BDS}. 
In 2015, Andrada, Barberis and Dotti (\cite{ABD}) worked on conformal Killing 2-forms and obtained some results on Lie groups with bi-invariant metric, $2$-step nilpotent Lie groups and also gave a clasification of 3-dimensional metric Lie algebras admitting a conformal Killing 2-form. In the years 2018-2019,
Andrada and Dotti continued with invariant CKY 2-forms on Lie groups in  \cite{AD}. They gave a  new construction of metric Lie algebras carrying strict conformal Killing 2-forms (CKY not KY). They fully analyzed the case of almost abelian Lie algebras admitting CKY $2$-forms. 
Then in \cite{AD20}, they studied  Killing 2-forms on $2$-step nilpotent Lie groups, proving that non-degenerate Killing $2$-forms appear only on complex $2$-step nilpotent Lie groups with a left-invariant metric.

In 2019 del Barco and Moroianu described Killing 2- and 3-forms on $2$-step nilpotent Lie groups in \cite{dBM19}. They used the de Rham decomposition and reduced  the problem to analyzing only the irreducible components. Their work generalizes some results in \cite{AD20}. In particular, they showed that Killing 2-forms are in correspondence with bi-invariant orthogonal complex structures. In that work they also studied Killing 3-forms, and they proved that left-invariant Killing $3$-forms on irreducible $2$-step nilpotent metric Lie algebras only exist when the corresponding Lie group is naturally reductive. They also looked into the vector space of invariant Killing 2-forms and 3-forms, and classified $2$-step nilpotent Lie groups carrying a left-invariant Riemannian
metric with non-zero Killing 2-forms (up to dimension 8) or Killing 3-forms (up to
dimension 6). 

In 2020, the same authors classified $2$-step nilpotent Lie groups endowed with left-invariant Killing forms of arbitrary degree when the
center of the group is at most $2$-dimensional, see \cite{dBM20}.
Then, in \cite{dBM21}, they proved that for $2$-step nilpotent Riemannian Lie groups with dimension of the center greater than or equal to 4, every conformal Killing 2- or 3-form is Killing. 
They also showed that the only $2$-step nilpotent Lie groups with center of dimension at most 3 admitting left-invariant strict CKY 2- and 3-forms are 
the Heisenberg Lie groups and their trivial
1-dimensional extensions endowed with any left-invariant metric and the simply connected Lie group corresponding to the free $2$-step nilpotent Lie algebra on 3 generators, with a particular 1-parameter family of
metrics.

Finally, in \cite{HO} we presented genuine examples of 5-dimensional Lie groups carrying invariant conformal Killing 2-forms which are not a linear combination of a KY $2$-form and the Hodge dual of KY forms. We also showed a classification of 5-dimensional metric Lie algebras that admits conformal Killing 2-forms when the dual of the $1$-form $\theta$ in \eqref{Tachibanaequationoriginal} is in the center or when it is orthogonal to the center.

Before closing the introduction, let us to introduce the notation for CKY forms in this invariant setting.

\subsection{Left-invariant CKY forms on Lie groups}

Let $G$ be a Lie group and $\ggo$ its Lie algebra of left-invariant vector fields. It is known that there is a linear isomorphism between $\ggo$ and $T_eG$ where $e$ is the identity in $G$. 
Consider a left-invariant Riemannian metric $g$ on $G$, that is $L_a^*g=g$, where $L_a$ denotes the left translation by $a\in G$. There is a well-known correspondence between left-invariant metrics on $G$ and inner products on $\ggo \cong T_eG$ defined as $\la \cdot,\cdot\ra:=g_e(\cdot,\cdot)$. 

Every left-invariant metric $g$  defines a unique Levi-Civita connection $\nabla$ on $G$, which is left-invariant and  for every $x,y,z\in \ggo$ it has the following simple expression:
\begin{equation}\label{koszul}
	2\la \nabla_xy,z\ra=\la [x,y],z\ra-\la [y,z],x\ra+\la [z,x],y\ra.
\end{equation}
In particular, it is easy to see that for any $x\in\ggo$, the endomorphism $\nabla_x:\ggo\to \ggo$ is skew-symmetric with respect to $\la \cdot, \cdot \ra$.

A  $p$-form $\omega$ on $G$ is left-invariant if $L_a^* \omega=\omega$ for all $a\in G$, and every left-invariant $p$-form $\omega$ can be identified with an element in $\Lambda^p\g^*$.
Conversely, every  element in $\Lambda^p\g^*$ defines a left-invariant $p$-form on $G$. 
Since the differential operator $\mathrm{d}$ and the co-differential operator
$\mathrm{d}^*$ preserve left-invariance, they define linear operators on $\Lambda^*\g^*$, which we denote with the same symbols for
simplicity. In particular, the linear operator $\mathrm{d} : \Lambda^p\g^*\to\Lambda^{p+1}\g^*$ is the Lie algebra differential, and $\mathrm{d}^*$ is the 
metric adjoint of $\mathrm{d}$ as soon as $\g$ is unimodular. Therefore, an element $\omega\in\Lambda^p\g^*$ corresponds to a left-invariant conformal
Killing form on $(G, g)$ if and only if
\begin{equation}\label{cky_lie_algebras}
	\nabla_x  \omega=\frac{1}{p+1}x\iprod\mathrm{d}\omega-\frac{1}{n-p+1}x^*\wedge \mathrm{d}^*\omega,
\end{equation}
for all $x\in\g$, where $x\iprod$ denotes the contraction with the vector $x$, and $x^*$ denotes the $1$-form dual to $x$.
We have that a Killing $p$-form is a co-closed $p$-form satisfying \eqref{cky_lie_algebras}. Similarly, a $*$-Killing $p$-form is a closed $p$-form satisfying \eqref{cky_lie_algebras}. Moreover, a strict CKY $p$-form is a CKY $p$-form which is not Killing. The space of solutions of the CKY equation \eqref{cky_lie_algebras} is denoted by $\mathcal{CK}^p(\g,\pint)$, similarly we have  $\mathcal{K}^p(\g,\pint)$ and  $\mathcal{*K}^p(\g,\pint)$ the space of Killing $p$-forms and the space of $*$-Killing $p$-forms respectively. 

\medskip

\section{CKY 2-forms in metric Lie algebras}
\label{xisection}
In this section we will recall the most relevant results about CKY $2$-forms contained in \cite{BDS,ABD,AD}.
Consider a left-invariant CKY $2$-form $\omega$ on a Lie group $G$ endowed with a left-invariant metric $g$, it induces an endomorphism $T$ on $G$ defined by $\omega(X,Y)=g(TX,Y)$ for $X,Y$ vector fields, then $T$ is also left-invariant. In particular, this induces a skew-symmetric endomorphism on $\g$, which we still denote by $T$.
Thus, the   study of left-invariant CKY 2-form is reduced at the Lie algebra level $\left(\mathfrak{g},\langle \cdot,\cdot \rangle\right)$, where $\langle \cdot,\cdot \rangle$ is the inner product induced by $g$. 

Moreover, CKY equation \eqref{cky_lie_algebras} is equivalent to the following expression at the Lie algebra level. 
\begin{equation}
	\label{ckyequationliealgebra}
	\la \left(\nabla_x T\right)y,z\ra+\la \left(\nabla_yT\right)x,z\ra=2\la x,y\ra \theta(z)-\la y,z\ra \theta(x)-\la x,z\ra \theta(y),
\end{equation} 
for some $\theta\in\ggo^*$.   A skew-symmetric endomorphism $T$ that satisfies \eqref{ckyequationliealgebra}  will be called a conformal Killing-Yano (CKY) tensor on $\g$.  
We denote by $\xi$ the unique element of $\ggo $ such that 
\begin{equation}\label{xi}
	\theta(x)=\la \xi,x\ra 
\end{equation}
for all $x\in \ggo$, and we will refer to $\xi$ as the vector associated to $\theta$. Moreover, if $\g$ is unimodular, then $\xi\in\g'=[\g,\g]$ as a consequence of \cite[Lemma 2.3]{ABD}.
Note that a strict CKY tensor is equivalent to requiring that $\theta\neq 0$.

In order to find strict examples of 2-forms CKY, the authors in \cite{AD} studied some properties of those metric Lie algebras carrying such 2-forms. 
In \cite[Proposition 4.1]{AD} they proved that the composition $\theta \circ T=0$. 
As a consequence they obtained that $T\xi=0$, see \cite[Corollary 4.2]{AD}, and showed a strong
algebraic restriction to the existence of a strict CKY tensor.

\begin{teo}\cite[Theorem 4.3]{AD} Let $T$ be a conformal Killing Yano tensor on the metric Lie algebra $\left(\mathfrak{g},\langle \cdot,\cdot\rangle\right)$ If $\theta \neq 0$ then $\dim \mathfrak{g}$ is odd and $T|_{\xi^\perp}:\xi^\perp\to \xi^\perp$ is a linear isomorphism. Moreover $\xi^\perp$ is stable by the operator $\ad_{\xi}$.
\end{teo}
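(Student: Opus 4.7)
The plan is to combine the two known facts $\theta\circ T = 0$ and $T\xi=0$ with two well-chosen specializations of the CKY identity \eqref{ckyequationliealgebra}. Skew-symmetry of $T$ together with $T\xi=0$ already gives $\mathrm{Im}(T)\subseteq \xi^{\perp}$, so $T$ restricts to an endomorphism of $\xi^{\perp}$, and it suffices to prove $T|_{\xi^{\perp}}$ is \emph{injective} for the isomorphism claim. The odd-dimension statement will then be automatic: a skew-symmetric invertible endomorphism of a real inner product space forces the dimension of that space to be even, so $\dim\xi^{\perp}=\dim\g-1$ is even.

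For the injectivity, I would set $y=x$ in \eqref{ckyequationliealgebra}, which collapses to
\begin{equation*}
\langle(\nabla_x T)x,z\rangle=|x|^2\,\theta(z)-\langle x,z\rangle\,\theta(x)
\end{equation*}
for all $x,z\in\g$. Restricting to $x\in\xi^{\perp}$ kills the second term (since $\theta(x)=\langle\xi,x\rangle=0$), so using \eqref{xi} we obtain the clean identity $(\nabla_x T)x=|x|^2\,\xi$. If in addition $Tx=0$, then $(\nabla_x T)x=\nabla_x(Tx)-T(\nabla_x x)=-T(\nabla_x x)$, whose left-hand side lies in $\mathrm{Im}(T)\subseteq \xi^{\perp}$. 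But the right-hand side $|x|^2\,\xi$ is collinear with $\xi$, and $\xi\neq 0$ since $\theta\neq 0$; therefore $|x|^2=0$, i.e.\ $x=0$. This is the entire geometric content of the theorem and is the step I expect to be the main obstacle.

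With injectivity in hand, for the stability of $\xi^{\perp}$ under $\ad_{\xi}$ I would specialize \eqref{ckyequationliealgebra} a second time, now at $x=y=\xi$. Using $\theta(\xi)=|\xi|^2$, the right-hand side vanishes, giving $(\nabla_{\xi}T)\xi=0$; combined with $T\xi=0$ this reduces to $T(\nabla_{\xi}\xi)=0$. The skew-symmetry of $\nabla_{\xi}$ forces $\nabla_{\xi}\xi\in\xi^{\perp}$, so the just-proven injectivity of $T|_{\xi^{\perp}}$ yields $\nabla_{\xi}\xi=0$. Finally, the Koszul formula \eqref{koszul} evaluated at $x=y=\xi$, $z=v$ gives $2\langle\nabla_{\xi}\xi,v\rangle=-2\langle[\xi,v],\xi\rangle$, so $\nabla_{\xi}\xi=0$ is equivalent to $\langle[\xi,v],\xi\rangle=0$ for every $v\in\g$. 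This says $\ad_{\xi}(\g)\subseteq\xi^{\perp}$, from which $\ad_{\xi}$-stability of $\xi^{\perp}$ follows immediately.
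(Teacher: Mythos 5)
Your proof is correct and complete; the survey only cites this result from \cite{AD} without reproducing its proof, and your argument follows essentially the same route as the original one: specialize the CKY identity \eqref{ckyequationliealgebra} on the diagonal to get $(\nabla_xT)x=|x|^2\xi$ for $x\in\xi^\perp$, then use $T\xi=0$ to see that $Tx=0$ forces $(\nabla_xT)x=-T(\nabla_xx)\in\Im(T)\subseteq\xi^\perp$, whence $x=0$. The parity argument for $\dim\mathfrak{g}$ and the derivation of $\nabla_\xi\xi=0$ (hence $\ad_\xi(\mathfrak g)\subseteq\xi^\perp$ via the Koszul formula) are likewise the standard steps, so there is nothing to correct.
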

It follows from the above theorem that if the dimension of $\mathfrak{g}$ is even, then $T$ is KY, see \cite[Corollary 4.4]{AD}. In particular, in the 4-dimensional case, any  CKY tensor is parallel, according to \cite[Corollary 4.5]{AD}. This last fact has been proved in a general way in \cite{ABM}, where it is showed that every 4-dimensional Riemannian manifold  with a CKY 2-form of constant norm must be parallel.

The next theorem, which is one of the main results in \cite{AD}, allows us to construct examples of $(n+1)$-dimensional metric Lie algebras admitting CKY tensors starting with a KY tensor with some properties in a $n$-dimensional metric Lie algebra.
\begin{teo}\cite[Theorem 4.6 and Theorem 4.8]{AD}
	\label{teorema dotti-andrada}
	Let $S$ be an invertible KY tensor on the metric Lie algebra $(\mathfrak{h},[\cdot,\cdot],\langle \cdot,\cdot \rangle)$ such that the  $2$-form $\mu(x,y)=-2\la S^{-1}x,y\ra$ is closed. Set $\mathfrak{g}:=\mathfrak{h}\oplus_{\mu} \mathbb{R} \xi $ the central extension of $\mathfrak{h}$ by the $2$-form $\mu$, that is the vector space $\mathfrak{h}\oplus \mathbb{R} \xi $ equipped with the Lie bracket $[\cdot,\cdot]_{\mu}$ defined by
	\begin{equation}\label{extensioncon mu}
		[\mathfrak{h},\xi]_{\mu}=0, \ \ [x,y]_{\mu}= [x,y]+\mu(x,y)\xi,   \  x,y \in \mathfrak{h};
	\end{equation}
	the inner product on $\ggo$ is defined by extending the one on $\mathfrak{h}$ by $\langle \mathfrak{h},\xi\rangle =0$, with $|\xi|>0$ arbitrary. Then, the endomorphism $T$ of $\ggo$ given by  $T|_{\mathfrak{h}}=S$ and  $T\xi=0$ is a strict CKY tensor on $\mathfrak{g}$.
	
	Conversely, any metric Lie algebra  $(\mathfrak{g},\langle \cdot,\cdot \rangle)$  admitting a strict CKY tensor $T$ with associated vector $\xi$ in the center is obtained  in this way, where $\mathfrak{h}=\xi^\perp$, $S:=T|_\mathfrak{h}$, and the Lie bracket on $\mathfrak{h}$ is the $\mathfrak{h}$-component of the Lie bracket on $\ggo$.
	Moreover, the center of $\ggo$ is generated by $\xi$. 
\end{teo}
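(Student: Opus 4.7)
The plan is to tackle the two implications separately. In both directions, the core reduction is to translate the CKY equation \eqref{ckyequationliealgebra} on $\g$ into conditions on $\h$ via Koszul's formula \eqref{koszul} applied to the (actual or prospective) central extension; the algebraic identity $\la S^{-1}x, Sy\ra = -\la x, y\ra$, which follows at once from the skew-symmetry of $S$, will appear repeatedly.

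For the forward implication, I would first verify that $[\cdot,\cdot]_\mu$ satisfies the Jacobi identity; this is precisely the Chevalley--Eilenberg $2$-cocycle condition on $\mu$, which is equivalent to the hypothesis that $\mu$ is closed on $\h$. Next I would compute the Levi-Civita connection $\nabla$ on $\g$ using \eqref{koszul}. Since $\xi$ is central and orthogonal to $\h$, a direct calculation yields, for $x, y \in \h$,
\begin{equation*}
\nabla_x y = \nabla^\h_x y + \tfrac{1}{2}\mu(x,y)\,\xi, \qquad \nabla_x \xi = \nabla_\xi x = |\xi|^2\, S^{-1}x, \qquad \nabla_\xi \xi = 0,
\end{equation*}
where $\nabla^\h$ is the Levi-Civita connection of $(\h,\pint|_\h)$. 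I would then check \eqref{ckyequationliealgebra} for $T$ with $\theta(\cdot) = \la\xi,\cdot\ra$ case by case, depending on whether each of $x,y,z$ lies in $\h$ or equals $\xi$. The purely $\h$-case reduces to the KY equation for $S$ on $\h$, while the remaining cases collapse using $T\xi = 0$ together with the identity $\la S^{-1}x, Sy\ra = -\la x, y\ra$. Since $\theta \neq 0$, the resulting CKY tensor is strict.

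For the converse, set $\h := \xi^\perp$ and $S := T|_\h$. Corollary~4.2 of \cite{AD} yields $T\xi = 0$, and the preceding theorem guarantees that $S$ is a skew-symmetric linear isomorphism of $\h$. Writing $[x,y]_\g = [x,y]_\h + f(x,y)\,\xi$ for $x, y \in \h$, where $[x,y]_\h$ denotes the $\h$-component, I would (i) show that $S$ is KY on $(\h,[\cdot,\cdot]_\h)$, (ii) identify $f$ with $\mu$, i.e., $f(x,y) = -2\la S^{-1}x, y\ra$, and (iii) derive Jacobi on $\h$ together with $d\mu = 0$ from Jacobi on $\g$. A Koszul calculation, exploiting the centrality of $\xi$, shows that the $\h$-component of $\nabla_x y$ for $x, y \in \h$ agrees with the Koszul expression built from $[\cdot,\cdot]_\h$, and that $\la \nabla_x\xi, z\ra = -\tfrac{1}{2}|\xi|^2 f(x,z)$ for $z \in \h$. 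Specializing \eqref{ckyequationliealgebra} to $x,y,z\in\h$ produces the KY equation for $S$ on $\h$; specializing to $z = \xi$ yields $f(x,Sy) + f(y,Sx) = 4\la x, y\ra$; and specializing to $y = \xi$, $z \in \h$ yields a second, independent linear relation between $f(x,Sy)$ and $f(y,Sx)$. Solving the resulting $2\times 2$ linear system forces $f(x,y) = -2\la S^{-1}x, y\ra$. Jacobi on $\g$ then splits cleanly into Jacobi on $\h$ and $d\mu = 0$, and (i)--(iii) follow. Finally, if $v = v_0 + c\xi$ with $v_0 \in \h$ is central in $\g$, then $f(v_0,\cdot)$ vanishes on $\h$; invertibility of $S$ then forces $v_0 = 0$, so that the center of $\g$ is exactly $\R\xi$.

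The main obstacle I anticipate is the identification $f = \mu$ in the converse: any single specialization of \eqref{ckyequationliealgebra} only constrains a particular combination of $f(x,Sy)$ and $f(y,Sx)$ and is therefore insufficient to pin $f$ down uniquely. The identification requires combining two independent specializations into a small linear system whose unique solution is $\mu$. Everything else is routine Koszul bookkeeping together with the recurring use of $\la S^{-1}x, Sy\ra = -\la x, y\ra$.
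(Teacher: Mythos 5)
Your proposal is correct: the Koszul computation of $\nabla$ on the central extension, the case-by-case verification of \eqref{ckyequationliealgebra} with $\theta=\la\xi,\cdot\ra$, and in the converse the $2\times 2$ system $f(x,Sz)+f(z,Sx)=4\la x,z\ra$, $2f(x,Sz)+f(Sx,z)=2\la x,z\ra$ forcing $f=\mu$ all check out. The survey states this result without proof (citing \cite{AD}), and your argument follows essentially the same route as the original proof there, so there is nothing further to compare.
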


The first natural examples can be constructed using 4-dimensional metric Lie algebras admitting a invertible Killing (thus parallel) tensor $S$. 
In \cite[Section 3]{Herrera} a full classification of all non-abelian $4$-dimensional metric Lie algebras $(\mathfrak h,\la \cdot, \cdot \ra)$ that carry parallel skew-symmetric endomorphisms was made. 
Moreover, this classification is up to isometric isomorphisms and for each fixed metric Lie algebra all parallel skew-symmetric tensors are given up to equivalence, where,
two parallel skew-symmetric tensors $H_1$ and $H_2$ are said to be \textit{equivalent} if there exists an isometric isomorphism of Lie algebras such that
\begin{equation}\label{equiv}
	\varphi: \ggo\to\ggo \; \text{such that} \; \varphi H_1=H_2 \varphi.	
\end{equation}

Notice that an invertible parallel tensor induces a 2-form $\mu$ as in Theorem \ref{teorema dotti-andrada}, which is always closed, see \cite[Corollary 2.2]{AD}. Using  the classification of invertible parallel tensors in dimension $4$ and Theorem \ref{teorema dotti-andrada},   we constructed 5-dimensional metric Lie algebras carrying a CKY tensor in \cite[Theorem 3.2]{HO}. Moreover, a classification of these $5$-dimensional metric Lie algebras was obtained. In Section \ref{ckylowdimension} we exhibit this classification.

\medskip

\subsection{Bi-invariant metrics}

We recall that a bi-invariant metric on $(G,g)$ is a metric which is invariant under left and right translations. It was proved in \cite{Mi} that a left-invariant metric on a connected Lie group is bi-invariant if and only if the linear transformation $\ad_x$ is skew-symmetric for all $x\in \ggo$. It is also known that a compact Lie group always admits a bi-invariant metric.
We summarize next the most relevant results of CKY tensors on Lie groups admitting a bi-invariant metric.

\begin{lema}\cite[Lemma 4.6]{BDS} Let $(G,g)$  be a Lie group with a  bi-invariant metric. If $T$ is a skew-symmetric tensor on $\g$, then $T$ is  a KY tensor if and only if $T|_{[\mathfrak{g},\mathfrak{g}]}=0$. In particular, a compact semisimple Lie group has no non trivial KY tensor.
\end{lema}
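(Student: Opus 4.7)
The plan is to translate the Killing--Yano equation into an algebraic identity on $\g$ via Koszul's formula. Since $\ad_x$ is skew-symmetric for every $x\in\g$ when $\la\cdot,\cdot\ra$ is bi-invariant, equation \eqref{koszul} reduces to $\nabla_x y = \tfrac{1}{2}[x,y]$, so $(\nabla_x T)y = \tfrac{1}{2}\bigl([x,Ty]-T[x,y]\bigr)$. The KY equation $(\nabla_x T)y+(\nabla_y T)x=0$ then collapses, after using antisymmetry of the bracket, to $[x,Ty]+[y,Tx]=0$, equivalently
\begin{equation*}
[Tx,y]=[x,Ty], \qquad T\ad_x = -\ad_x T, \qquad T[x,y]=-[Tx,y].
\end{equation*}

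For the direction KY $\Rightarrow T|_{[\g,\g]}=0$, I would iterate the anticommutation. Applying $T\ad_x=-\ad_x T$ twice gives $T\ad_x\ad_y=\ad_x\ad_y T$, so $T$ commutes with the commutator $[\ad_x,\ad_y]=\ad_{[x,y]}$. But the same anticommutation applied directly with $w=[x,y]$ yields $T\ad_{[x,y]}=-\ad_{[x,y]}T$. Comparing the two expressions for $T\ad_{[x,y]}$ forces $\ad_{[x,y]}T=0$, i.e., $[[x,y],Tz]=0$ for all $x,y,z$. Using the KY relation once more with $a=[x,y]$ gives $[T[x,y],z]=[[x,y],Tz]=0$, so $T[x,y]\in Z(\g)$. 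On the other hand, $T[x,y]=-[Tx,y]\in[\g,\g]$. Since bi-invariance implies $\la[a,b],z\ra=\la a,[b,z]\ra=0$ for $z\in Z(\g)$, we have $[\g,\g]^\perp=Z(\g)$, hence $[\g,\g]\cap Z(\g)=0$, and therefore $T[x,y]=0$.

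Conversely, assuming $T|_{[\g,\g]}=0$, skew-symmetry gives $\la Tu,[x,y]\ra=-\la u,T[x,y]\ra=0$, so $\Im T\subseteq[\g,\g]^\perp=Z(\g)$. Then $[Tu,v]=0=[u,Tv]$ for all $u,v$, and $(\nabla_u T)v=\tfrac{1}{2}([u,Tv]-T[u,v])$ vanishes identically, giving $\nabla T=0$ and in particular the KY condition. The ``in particular'' statement follows at once: if $\g$ is compact semisimple, then $[\g,\g]=\g$ and $Z(\g)=0$, so the condition $T|_{[\g,\g]}=0$ forces $T=0$.

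The main obstacle to anticipate is recognizing that the two ways $T$ interacts with $\ad_{[x,y]}$ must coincide: directly via the anticommutation, and indirectly via the commutator of $\ad_x$ and $\ad_y$. Once the relation $\ad_{[x,y]}T=0$ is isolated, the proof reduces to the orthogonal decomposition $\g=Z(\g)\oplus[\g,\g]$ that characterizes bi-invariant metric Lie algebras.
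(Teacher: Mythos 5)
Your proof is correct. The survey does not reproduce an argument for this lemma (it only cites [BDS]), but your route --- reducing Koszul's formula to $\nabla_xy=\tfrac12[x,y]$, rewriting the Killing condition as $[Tx,y]=[x,Ty]$, and exploiting the orthogonal splitting $\g=Z(\g)\oplus[\g,\g]$ forced by bi-invariance --- is the standard one, and every step checks out. The one place where you assert more than you justify is the chain of ``equivalently'': passing from $[Tx,y]=[x,Ty]$, i.e. $\ad_x\circ T=\ad_{Tx}$, to the anticommutation $T\ad_x=-\ad_xT$ is not a formal rewriting; it uses that $\ad_{Tx}$ is skew-symmetric, so taking adjoints of $\ad_xT=\ad_{Tx}$ and using $T^*=-T$, $\ad_x^*=-\ad_x$ gives $T\ad_x=(\ad_xT)^*=-\ad_{Tx}=-\ad_xT$. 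Since the entire forward direction (the clash between $T$ commuting and anticommuting with $\ad_{[x,y]}$) rests on that identity, this one line should be written out; note that without skew-symmetry of $T$ the implication is false (e.g. $T=\operatorname{Id}$ satisfies $[Tx,y]=[x,Ty]$). With that line added, the rest --- $\ad_{[x,y]}T=0$, hence $T[x,y]\in Z(\g)\cap[\g,\g]=0$, the converse via $\operatorname{Im}T\subseteq Z(\g)$ (which in fact shows $T$ is parallel), and the semisimple case --- is complete.
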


In the particular case of $SU(2)$, there is a stronger condition. Indeed, there are no non-trivial KY tensors for any left-invariant metric
on $SU(2)$, not just the bi-invariant one.

\begin{teo}\cite[Theorem 4.7]{BDS}
For any left-invariant metric $g$ on $SU(2)$, all Killing–Yano tensors are trivial.
\end{teo}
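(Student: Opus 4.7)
The plan is to reduce the statement to the simplicity (in particular, perfectness) of $\mathfrak{su}(2)$ via Hodge duality, rather than wrestling with the Koszul formula in a three-parameter family of metrics. Let $T$ be a Killing--Yano tensor on $(\mathfrak{su}(2), \langle \cdot,\cdot\rangle)$ and let $\omega \in \Lambda^2\mathfrak{su}(2)^*$ be the associated $2$-form, $\omega(x,y) = \langle Tx, y\rangle$. By the definition recalled in the excerpt, $T$ being KY means that $\omega$ is co-closed: $d^*\omega = 0$.

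The key observation is that $\dim SU(2) = 3$, so the Hodge star sends $\omega$ to a $1$-form $\alpha := *\omega \in \mathfrak{su}(2)^*$, and $\omega = 0$ if and only if $\alpha = 0$. On an oriented Riemannian $3$-manifold, the codifferential on $2$-forms equals $\pm *d*$; since $\mathfrak{su}(2)$ is unimodular this identity descends to the Chevalley--Eilenberg complex $\Lambda^{\bullet}\mathfrak{su}(2)^*$ (as the excerpt notes, $d^*$ coincides with the metric adjoint of $d$ in the unimodular case). Therefore $d^*\omega = 0$ yields $*d\alpha = 0$, and since $*$ is an isomorphism we conclude $d\alpha = 0$.

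Now I would invoke that on any Lie algebra $\mathfrak{g}$ the Chevalley--Eilenberg differential of a $1$-form is $d\alpha(x,y) = -\alpha([x,y])$, so closedness of $\alpha$ is equivalent to $\alpha$ vanishing on $[\mathfrak{g},\mathfrak{g}]$. Since $\mathfrak{su}(2)$ is simple, it is perfect: $[\mathfrak{su}(2),\mathfrak{su}(2)] = \mathfrak{su}(2)$, and hence $\alpha = 0$. Thus $\omega = 0$ and $T = 0$, for every left-invariant metric (the metric entered only through the Hodge star, which makes no quantitative difference for the argument).

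I do not anticipate a serious obstacle: the whole proof pivots on a structural fact about $\mathfrak{su}(2)$ (perfectness) rather than on the detailed shape of $\nabla$. The only subtlety worth checking is the sign convention in $d^* = \pm *d*$ and the fact that this identity, valid on an oriented Riemannian $3$-manifold, restricts cleanly to left-invariant forms---which is automatic because left translations are orientation-preserving isometries and $\mathfrak{su}(2)$ is unimodular. A direct Koszul-formula computation in an orthonormal basis $\{e_1,e_2,e_3\}$ (with $[e_i,e_j] = \lambda_k e_k$ cyclically) and $T$ a generic $3\times 3$ skew-symmetric matrix would also work, but would turn the argument into a case analysis and obscure the conceptual reason---namely that the result is really a manifestation of the simplicity of $\mathfrak{su}(2)$ and not a small-dimensional coincidence.
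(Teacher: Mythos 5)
Your argument is correct, and it is genuinely different from (and slicker than) the proof in the cited reference. The proof of \cite[Theorem 4.7]{BDS} proceeds by direct computation: one chooses a Milnor frame, i.e.\ an orthonormal basis $\{e_1,e_2,e_3\}$ of $(\mathfrak{su}(2),\la\cdot,\cdot\ra)$ with $[e_1,e_2]=\lambda_3e_3$, $[e_2,e_3]=\lambda_1e_1$, $[e_3,e_1]=\lambda_2e_2$ and $\lambda_i>0$, computes the Levi-Civita connection from \eqref{koszul}, writes $T$ as a generic skew-symmetric $3\times3$ matrix, and checks that the resulting linear system forced by the Yano equation has only the trivial solution. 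Your route bypasses all of this: you use only the \emph{necessary} condition $\mathrm{d}^*\omega=0$, the pointwise identity $\mathrm{d}^*=\pm*\mathrm{d}*$ in dimension $3$, and the perfectness of $\mathfrak{su}(2)$, concluding that $\mathrm{d}^*\colon\Lambda^2\mathfrak{su}(2)^*\to\mathfrak{su}(2)^*$ is injective so that there are no nonzero co-closed invariant $2$-forms at all, for any left-invariant metric. This is strictly stronger than the statement (the Yano equation is never engaged beyond co-closedness), it applies verbatim to any $3$-dimensional perfect metric Lie algebra (hence also to $\mathfrak{sl}(2,\R)$), and it is consistent with Table \ref{tabla_dim3}, where the $2$-forms on $\mathfrak{su}(2)$ and $\mathfrak{sl}(2,\R)$ are strict CKY (closed, not co-closed). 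One small imprecision: the identity $\mathrm{d}^*=\pm*\mathrm{d}*$ restricts to left-invariant forms on \emph{any} Lie group with a left-invariant metric and orientation, since it is a pointwise formula and both $*$ and the de Rham $\mathrm{d}$ preserve invariance; unimodularity is only needed to identify $\mathrm{d}^*$ with the metric adjoint of the Chevalley--Eilenberg differential, which your argument does not actually use. As $\mathfrak{su}(2)$ is unimodular this is harmless, but the remark matters if one wants the full generality your proof actually delivers.
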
 

In \cite{ABD} the authors also consider compact Lie groups with a bi-invariant metric and they prove a very strong restriction for the existence of a strict CKY tensor.
\begin{teo}\cite[Theorem 3.1]{ABD}\label{bi inv dim 3}
Let $G$ be a $n$-dimensional compact Lie group equipped with a bi-invariant metric $g$. If there exists a CKY tensor which is not KY then $n=3$ and $\ggo$ is isomorphic to $\mathfrak{su}(2)$.
\end{teo}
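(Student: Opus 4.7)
The plan is to exploit the very simple form of the Levi-Civita connection for a bi-invariant metric, $\nabla_x y = \tfrac12[x,y]$, together with the structural facts already recalled, namely $T\xi=0$, $\theta\circ T=0$, and that $T|_{\xi^\perp}:\xi^\perp\to\xi^\perp$ is a linear isomorphism (Theorem 4.3 of \cite{AD}). I would first compute $(\nabla_x T)y = \nabla_x(Ty) - T(\nabla_x y) = \tfrac12[x,Ty] - \tfrac12 T[x,y]$ and substitute into the Lie-algebraic CKY identity \eqref{ckyequationliealgebra}. Symmetrizing in $x$ and $y$ makes the $T[x,y]+T[y,x]$ terms cancel, leaving the clean equation
$$\la [x,Ty] + [y,Tx], z\ra = 4\la x,y\ra\theta(z) - 2\la y,z\ra\theta(x) - 2\la x,z\ra\theta(y)$$
for all $x,y,z\in\ggo$.

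Next I would specialize to $y=\xi$. Using $T\xi=0$, $\la\xi,\cdot\ra=\theta(\cdot)$ and $\theta(\xi)=|\xi|^2$, the equation collapses to the vector identity
$$[\xi, Tx] = 2\theta(x)\xi - 2|\xi|^2 x, \qquad x\in\ggo.$$
On $\xi^\perp$ the term $2\theta(x)\xi$ vanishes, so $\ad_\xi\circ T|_{\xi^\perp} = -2|\xi|^2\,\I_{\xi^\perp}$. Since $T|_{\xi^\perp}$ is bijective and $\ad_\xi$ is skew-symmetric (hence stabilizes $\xi^\perp$), this forces $\ad_\xi|_{\xi^\perp}$ to be an automorphism of $\xi^\perp$. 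In particular the centralizer of $\xi$ in $\ggo$ is the one-dimensional subspace $\R\xi$.

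The conclusion is then forced by the structure of compact Lie algebras. Writing $\ggo=\z(\ggo)\oplus[\ggo,\ggo]$ with $[\ggo,\ggo]$ compact semisimple, unimodularity together with \cite[Lemma 2.3]{ABD} gives $\xi\in[\ggo,\ggo]$; since also $\z(\ggo)\subseteq\ker\ad_\xi=\R\xi$ and $\z(\ggo)\cap[\ggo,\ggo]=0$, we conclude $\z(\ggo)=0$, so $\ggo$ is semisimple. Any maximal torus containing $\xi$ is abelian and hence sits inside $\ker\ad_\xi=\R\xi$, so $\ggo$ has rank one; the only compact semisimple Lie algebra of rank one is $\mathfrak{su}(2)$, giving $n=\dim\ggo=3$. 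The main obstacle is the algebraic step of extracting $\ad_\xi\circ T|_{\xi^\perp}=-2|\xi|^2\,\I$ via the specialization $y=\xi$; once this rigidity of the centralizer of $\xi$ is established, the rank-one classification closes the proof.
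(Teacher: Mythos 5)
Your argument is correct: the reduction $(\nabla_xT)y=\tfrac12\bigl([x,Ty]-T[x,y]\bigr)$ for a bi-invariant metric, the cancellation of the $T[\cdot,\cdot]$ terms upon symmetrization, the specialization $y=\xi$ yielding $[\xi,Tx]=2\theta(x)\xi-2|\xi|^2x$ and hence $\ad_\xi\circ T|_{\xi^\perp}=-2|\xi|^2\,\I_{\xi^\perp}$, and the conclusion that $\ker\ad_\xi=\R\xi$ forces $\ggo$ to be compact semisimple of rank one, all check out. The survey states this theorem without reproducing the proof from \cite{ABD}, but your derivation is exactly the expected route (invertibility of $\ad_\xi$ on $\xi^\perp$ plus the centralizer/rank argument), so there is nothing to correct.
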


\medskip

\subsection{Flat Lie groups}

Given a Lie group $G$, an inner product $\langle\cdot,\cdot\rangle$ on its Lie algebra $\g$ induces a left-invariant flat metric if and only if:  
 \begin{enumerate}
	\item there exists 
	a  decomposition $\mathfrak{g}=\mathfrak{a}\oplus \mathfrak{u}$, where $\mathfrak{u}$ is an abelian ideal of $\g$ and its orthogonal complement $\mathfrak{a}$ is an abelian subalgebra.
	\item  the endomorphisms $\ad_x$ are skew-symmetric for all $x\in \mathfrak{a}$.
\end{enumerate}
In particular, $\g$ is unimodular. Moreover, $\nabla_u=0$ for all $u\in\u$, and $\dim\g'\geq 2$ if $\g$ is not abelian.

In \cite{AD}, the authors considered  a CKY 2-forms on non-abelian Lie groups with flat left-invariant metric, and they proved that every CKY tensor is parallel.

\begin{lema}\cite[Lemma 6.1]{AD}
	Let $G$ be a non-abelian Lie group with a flat left-invariant metric. If T is a left-invariant CKY tensor, then T is parallel.
\end{lema}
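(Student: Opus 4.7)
The plan is to split the argument into two parts: first, rule out the strict case (show $\theta=0$); then, show that a left-invariant KY tensor on a non-abelian flat metric Lie algebra is automatically parallel.

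For the first step, I would exploit that in the flat case $\nabla_u=0$ for every $u\in\u$, which in particular forces $\nabla_u T=0$. Since $\g$ is unimodular and non-abelian, the associated vector satisfies $\xi\in\g'\subseteq\u$, and moreover $\dim\u\geq\dim\g'\geq 2$. Substituting $x=y=u\in\u$ into the CKY equation \eqref{ckyequationliealgebra}, the left-hand side vanishes, leaving
$$|u|^{2}\,\theta(z)=\la u,z\ra\,\theta(u),\qquad z\in\g,\ u\in\u.$$
This says that $\theta$ is a scalar multiple of the dual $1$-form $u^{*}$ for every non-zero $u\in\u$. Choosing two linearly independent vectors in $\u$ then forces $\theta=0$, so $T$ is KY.

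For the second step, I would use that when $T$ is KY the trilinear form $\Phi(x,y,z):=\la(\nabla_x T)y,z\ra$ is totally skew-symmetric: it is skew in $(y,z)$ because $\nabla_x T$ is a skew-symmetric endomorphism, and skew in $(x,y)$ by the KY equation itself. Since $\nabla_u T=0$ for $u\in\u$, $\Phi$ vanishes whenever any one of its arguments lies in $\u$; by total antisymmetry, the only remaining case is $x,y,z\in\mathfrak{a}$. A short Koszul computation using $[\mathfrak{a},\mathfrak{a}]=0$ and the skew-symmetry of $\ad_x$ for $x\in\mathfrak{a}$ shows $\nabla_x\mathfrak{a}=0$ and $\nabla_x|_{\u}=\ad_x|_{\u}$. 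Consequently, for $x,y\in\mathfrak{a}$,
$$(\nabla_x T)y=\nabla_x(Ty)-T(\nabla_x y)=\ad_x\bigl((Ty)_{\u}\bigr)\in\u,$$
which forces $\Phi(x,y,z)=0$ also for $z\in\mathfrak{a}$. Combined with the previous vanishing, $\Phi\equiv 0$, i.e.\ $\nabla T=0$.

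The only genuinely computational step — and the mildest of obstacles — is the Koszul-level verification that $\nabla_x$ kills $\mathfrak{a}$ and preserves $\u$ for $x\in\mathfrak{a}$, which guarantees that $(\nabla_x T)y$ lands in $\u$ in the purely $\mathfrak{a}\times\mathfrak{a}$ case. Everything else is a direct consequence of $\nabla_u=0$ and the total antisymmetry of $\Phi$ in the KY regime.
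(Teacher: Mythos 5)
Your argument is correct and follows essentially the same route as the paper's proof: plugging $x=y=u\in\u$ into \eqref{ckyequationliealgebra} together with $\nabla_u=0$ and $\dim\g'\geq 2$ forces $\theta=0$, and the parallelism then reduces to the $\mathfrak a\times\mathfrak a$ case, which you settle by the Koszul computation $\nabla_x\mathfrak a=0$, $\nabla_x|_{\u}=\ad_x|_{\u}$. Your use of the total antisymmetry of $\Phi(x,y,z)=\la(\nabla_xT)y,z\ra$ in the Killing regime is exactly the mechanism behind the paper's "it reduces to check $(\nabla_aT)b=0$ for $a,b\in\mathfrak a$", so the two proofs coincide in substance.
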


\begin{proof}[Sketch of proof]
	The idea of the proof is to consider $\xi$ as in \eqref{xi}, then $\xi\in\g'\subset\u$.
	If $\xi\neq0$, using the CKY condition \eqref{ckyequationliealgebra} and the fact that $\nabla_u=0$, it is easy to see that $\u=\R\xi$. This is not possible since $\dim\g'\geq 2$, and therefore $\xi=0$ and $T$ is a KY tensor. To show that $T$ is in fact parallel, it reduces to check that $(\nabla_a)b=0$ for all $a,b\in\mathfrak a$, which is a direct computation.
\end{proof}

In \cite{BDS}, the authors had characterized invariant KY tensors on a flat Lie group, which are in fact parallel, as a consequence of the last lemma. Indeed, they proved in \cite[Theorem 4.1]{BDS} that on a Lie group endowed with a flat left-invariant metric, a skew-symmetric endomorphism $T$ on its metric Lie algebra $(\ggo,\langle\cdot,\cdot\rangle)$ is KY (thus parallel) if and only if $[Tx,z]=[\ad_x,T]=0$ for all $x,z\in \mathfrak{a}$.
In particular, any KY tensor on a $(\ggo,\langle\cdot,\cdot\rangle)$  given by
$$T=\left(\begin{array}{cc}T_1&0\\0&T_2
\end{array}\right)$$
where $T_1$ is a skew-symmetric endomorphism in $\mathfrak{a}$ and $T_2$ is a skew-symmetric endomorphism in $\mathfrak{u}$ such that conmutes with $\ad_y$ for all $y\in \mathfrak{a}$, will produce an example of a parallel tensor on $(\ggo,\langle\cdot,\cdot\rangle)$.

As an example, in \cite[Proposition 4.5]{BDS} the authors considered the Lie algebra $\mathfrak{e}(2)\times \mathfrak{e}(2)$\footnote{$\mathfrak{e}(2)$ is the Lie algebra of the isometry group of the Euclidean plane (see Table \ref{tabla_dim3}).}, where  with the orthonormal basis $\{e_1,e_2,e_3,e_4,e_5,e_6\}$ such that
\begin{align*}
	[e_5,e_1]&=e_2, & [e_5,e_2]&=-e_1, & [e_6,e_3]&=e_4, & [e_6,e_4]=-e_3
\end{align*}
In this case a parallel tensor can be written as
$$T=\left( \begin{array}{cccccc}0&-a&&&&\\a&0&&&&\\
	&&0&-b&&\\&&b&0&&\\&&&&0&-c\\&&&&c&0\end{array}\right),\, a,b,c\in \R.$$

\

\subsection{Almost abelian Lie algebras}

We recall that a Lie group $G$ is said to be {\em almost abelian} if its Lie algebra $\g$ has a 
codimension one abelian ideal. Such a Lie algebra will be called almost abelian, and it can be 
written as $\g= \R f_1 \ltimes_{\ad_{f_1}} \mathfrak{u}$, where $\mathfrak u$ is an abelian ideal of 
$\g$, and $\R$ is generated by $f_1$. Accordingly, the Lie group $G$ is a semidirect product 
$G=\R\ltimes_\phi \R^d$ for some $d\in\mathbb N$, where the action is given by 
$\phi(t)=e^{t\ad_{f_1}}$. We point out that an almost abelian Lie algebra is nilpotent if and only 
if the operator $\ad_{f_1}|_{\mathfrak u}$ is nilpotent. 

Regarding the isomorphism classes of almost abelian Lie algebras, one can show that (see \cite{Freibert}):
\begin{lema}\label{ad-conjugated}
	Two almost abelian Lie algebras $\g_1=\R f_1\ltimes_{\ad_{f_1}} \u_1$ and 
	$\g_2=\R f_2\ltimes_{\ad_{f_2}}\u_2$ are isomorphic if and only if there exists $c\neq 0$ such that
	$\ad_{f_1}$ and $c\ad_{f_2}$ are conjugate. 
\end{lema}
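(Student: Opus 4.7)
The plan is to prove both implications separately. The converse is a routine construction, while the main implication is delicate because a given almost abelian Lie algebra can carry more than one codimension one abelian ideal (for instance the Heisenberg algebra), so an arbitrary isomorphism need not send $\u_1$ onto $\u_2$.

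For the ``$\Leftarrow$'' direction, I will take a linear isomorphism $P\colon \u_2\to\u_1$ and a scalar $c\neq 0$ satisfying $\ad_{f_1}\circ P=c\,P\circ\ad_{f_2}$, and define $\varphi\colon\g_2\to\g_1$ by $\varphi(f_2)=c^{-1}f_1$ and $\varphi|_{\u_2}=P$. Because $\u_1$ and $\u_2$ are abelian, the bracket inside $\u_2$ is trivially preserved, and the mixed bracket condition reduces to $[\varphi(f_2),\varphi(u)]=c^{-1}\ad_{f_1}(Pu)=P(\ad_{f_2}u)=\varphi([f_2,u])$, which is exactly the intertwining hypothesis.

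For the ``$\Rightarrow$'' direction, I will start with an isomorphism $\varphi\colon\g_1\to\g_2$ and split into two cases. In the generic case $\varphi(\u_1)=\u_2$, I set $P:=\varphi|_{\u_1}$, write $\varphi(f_1)=\lambda f_2+v$ with $v\in\u_2$, note that $\lambda\neq 0$ by bijectivity (otherwise $\varphi(f_1)\in\varphi(\u_1)$, a contradiction), and then the homomorphism condition applied to $[f_1,u]$ for $u\in\u_1$ yields $P\circ\ad_{f_1}=\lambda\,\ad_{f_2}\circ P$, giving the desired conjugacy with $c=\lambda$.

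The main obstacle is the remaining case $\u':=\varphi(\u_1)\neq\u_2$, in which $\g_2$ carries two distinct codimension one abelian ideals. My approach will be to first show, using that $\u_2$ and $\u'$ are both abelian ideals, that every bracket in $\g_2$ must lie in $\u_2\cap\u'$, and by choosing $e\in\u_2\setminus\u'$ and $e'\in\u'\setminus\u_2$ observe that the only surviving commutator up to scalars is $[e,e']$, whence $\dim[\g_2,\g_2]\le 1$. Consequently both $\ad_{f_1}|_{\u_1}$ and $\ad_{f_2}|_{\u_2}$ have rank at most one. A short linear algebra argument then produces the required scalar and intertwiner: trivially when both vanish, by the uniqueness of the rank-one nilpotent Jordan form in the nilpotent sub-case, and by adjusting the unique non-zero eigenvalue via the scalar $c$ in the semisimple sub-case (the ranks and the nilpotent/semisimple dichotomy agree on the two sides since they are preserved by the isomorphism).
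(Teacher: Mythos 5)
Your proof is correct. Note that the survey does not actually prove this lemma --- it is stated as a known result with a pointer to \cite{Freibert} --- so there is no in-paper argument to compare against; your write-up supplies a complete, self-contained proof. Both implications check out: the ``$\Leftarrow$'' construction is the standard one, and in ``$\Rightarrow$'' you correctly identify the genuine difficulty, namely that an isomorphism need not carry $\u_1$ onto $\u_2$. Your treatment of that exceptional case is sound: if $\g_2$ contains two distinct codimension-one abelian ideals $\u_2$ and $\u'$, then $[\g_2,\g_2]\subseteq\u_2\cap\u'$, and since all brackets inside $\u_2$ and inside $\u'$ vanish, the commutator is spanned by the single element $[e,e']$ with $e\in\u_2\setminus\u'$, $e'\in\u'\setminus\u_2$; hence both $\ad_{f_1}|_{\u_1}$ and $\ad_{f_2}|_{\u_2}$ have rank at most one, and the rank (equal to $\dim\g_i'$) and the nilpotent/semisimple dichotomy (equivalent, for almost abelian algebras, to nilpotency of $\g_i$, as recalled in the survey) are isomorphism invariants, which is exactly what your final linear-algebra step needs. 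One small remark: in this exceptional case the commutator $[e,e']$ lies in $\u_2\cap\u'$, which your computation shows is central, so $\g_2$ is forced to be abelian or isomorphic to $\h_3\oplus\R^{n-3}$; consequently the rank-one semisimple sub-case you cover can never actually occur there. Including it does no harm, but you could have disposed of the whole case by observing that the algebra is then $2$-step nilpotent, so both adjoint operators are rank-$\le 1$ nilpotents and hence conjugate outright.
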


In \cite{AD} the authors studied CKY $2$-forms on almost abelian Lie groups, and they proved that only in dimension $3$ it is possible to find examples of strict CKY $2$-forms.
Moreover, if it is a KY $2$-form, then it is parallel.
\begin{teo}\label{almost-abelian-2-forms}\cite[Theorem 6.2]{AD} Let $\mathfrak{g}$ be an almost abelian Lie algebra equipped with an inner product and a CKY tensor $T$.
	\begin{enumerate}
		\item If $\theta  \neq 0$ then $\dim \mathfrak{g}=3$, then $\ggo$ is isomorphic to $\mathfrak{h}_3$  or to $\aff(\R)\times \R$, where $\mathfrak{h}_3$ is the $3$-dimensional Heisenberg Lie algebra and $\aff(\R)$ is $2$-dimensional non abelian Lie algebra (see Table \ref{tabla_dim3}).
		\item If $\theta  =0$ then $T$ is parallel.
	\end{enumerate}	
\end{teo}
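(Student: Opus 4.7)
The plan is to work directly on the algebra $\g = \R f_1 \ltimes_B \u$, where $B := \ad_{f_1}|_\u$, fixing an orthonormal basis so that $|f_1| = 1$ and $f_1 \perp \u$. I decompose $B = B_s + B_a$ into its symmetric and skew-symmetric parts and apply Koszul's formula \eqref{koszul} to obtain the Levi-Civita connection in closed form: $\nabla_{f_1}f_1 = 0$, $\nabla_{f_1}|_\u = B_a$, $\nabla_u f_1 = -B_s u$, and $\nabla_u v = \la B_s u, v\ra f_1$ for $u, v \in \u$. Since $T$ is skew, it is determined by a vector $v_0 := T f_1 \in \u$ and a skew-symmetric endomorphism $T_0$ of $\u$ via $Tu = -\la u, v_0\ra f_1 + T_0 u$. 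Expanding $(\nabla_x T)y$ block by block in this decomposition, the CKY equation \eqref{ckyequationliealgebra} reduces to a finite list of algebraic identities among $B_s$, $B_a$, $v_0$, $T_0$ and $\theta$.

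For part (2), I set $\theta = 0$ and substitute $x, y \in \{f_1\} \cup \u$ into the Killing equation $(\nabla_x T)y + (\nabla_y T)x = 0$. This yields: $B_a v_0 = 0$ (from $x = y = f_1$); the mixed identity $[B_a, T_0] + T_0 B_s = 0$ (from $x = f_1$, $y \in \u$); and, from $x, y \in \u$, both $[B_s, T_0] = 0$ and the $\u$-valued identity $\la v, v_0\ra B_s u + \la u, v_0\ra B_s v = 2 \la B_s u, v\ra v_0$ for all $u, v \in \u$. Specializing the latter to $u = v = v_0$ and then to $v = v_0$ forces $B_s v_0 = \lambda v_0$ for some scalar $\lambda$, and that the image of $B_s$ is contained in $\R v_0$. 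Applying the mixed identity to $v_0$ and using $B_a v_0 = 0$ gives $B_a (T_0 v_0) = -\lambda T_0 v_0$; since $B_a$ is skew-symmetric and therefore has no nonzero real eigenvalue, one concludes $\lambda = 0$ and $B_s v_0 = 0$. The relation $B_s T_0 = 0$ then follows from $[B_s, T_0] = 0$ and the rank-one structure of $B_s$, after which each block of $(\nabla_x T)y$ computed from the Levi-Civita formulas above vanishes, proving $\nabla T = 0$.

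For part (1), I invoke the theorem of \cite{AD} recalled just above: $\dim \g$ is odd, $T|_{\xi^\perp}$ is a linear isomorphism, and $\xi^\perp$ is $\ad_\xi$-stable. Writing $\xi = \xi_1 f_1 + \xi_0$ with $\xi_0 \in \u$, the condition $T\xi = 0$ gives $\la \xi_0, v_0\ra = 0$ and $\xi_1 v_0 + T_0 \xi_0 = 0$. Plugging into the full CKY equation with $x = y$ running over $\xi^\perp$, so that the right-hand side collapses to $|x|^2 \theta(z)$, I extract algebraic relations expressing the $B_s$- and $B_a$-contributions to $\nabla T$ in terms of $\theta$, $v_0$ and $T_0$; combining these with the invertibility of $T|_{\xi^\perp}$ on the even-dimensional space $\xi^\perp$ forces $\dim \xi^\perp = 2$, hence $\dim \u = 2$ and $\dim \g = 3$. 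The isomorphism class of $\g$ is then pinned down using Lemma \ref{ad-conjugated}: among the conjugacy classes of $2\times 2$ matrices, only the nilpotent case (yielding $\h_3$) and the rank-one diagonal case (yielding $\aff(\R)\times\R$) are compatible with the CKY data above. The main obstacle is precisely this dimension-reduction step: unlike part (2), the strict CKY case does not kill the connection, so one must organize the interaction among $\xi_0$, $v_0$, $B_s$ and $B_a$ carefully enough to exclude any $\xi^\perp$ of dimension $\geq 4$ while preserving the two surviving isomorphism classes rather than eliminating them.
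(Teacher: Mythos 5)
Your setup is the standard and correct one for almost abelian metric Lie algebras: take $f_1$ a unit vector orthogonal to the abelian ideal $\u$, compute $\nabla$ from Koszul's formula, and split $B=\ad_{f_1}|_{\u}$ and $T$ into blocks. All of your connection formulas and the algebraic identities you list for the Killing case check out. But part (2) has a gap at $v_0=Tf_1=0$: every step you use to reach $T_0B_s=0$ (specializing to $u=v=v_0$, the conclusion $\Im B_s\subseteq\R v_0$, the rank-one structure of $B_s$) becomes vacuous there, and you are left with only $[B_s,T_0]=0$ and $[B_a,T_0]=-T_0B_s$, from which $T_0B_s=0$ does not follow formally. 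It is still true, but needs a separate idea, e.g.\ compress to each eigenspace $E_\mu$ of $B_s$ (preserved by $T_0$) to get $[P_\mu B_aP_\mu,\,T_0|_{E_\mu}]=-\mu\, T_0|_{E_\mu}$ and then use that the adjoint action of a skew-symmetric operator has no nonzero real eigenvalue; this kills $\mu\,T_0|_{E_\mu}$ for every $\mu$. Relatedly, when $v_0\neq 0$ your ``one concludes $\lambda=0$'' is too strong: if $T_0v_0=0$ the eigenvalue argument gives nothing about $\lambda$, although $T_0B_s=0$ (which is all you actually need) still holds in that subcase.

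The more serious problem is part (1), which carries essentially all the content of the theorem. The implication $\theta\neq 0\Rightarrow\dim\g=3$ is not proved: you say that the relations extracted from the CKY equation, ``combined with the invertibility of $T|_{\xi^\perp}$ on the even-dimensional space $\xi^\perp$,'' force $\dim\xi^\perp=2$, but invertible skew-symmetric endomorphisms exist on even-dimensional spaces of every dimension, the relations in question are never written down, and no mechanism is given that bounds the rank of $B$, of $\ad_\xi$, or of $T_0$ so as to exclude $\dim\u\geq 4$. You yourself flag this as ``the main obstacle,'' which is an accurate self-assessment: as written it is a statement of intent, not an argument. The final sorting of conjugacy classes of $\ad_{f_1}$ into $\h_3$ versus $\aff(\R)\times\R$ (in particular, ruling out invertible $\ad_{f_1}$, e.g.\ $\mathfrak{e}(2)$ or $\mathfrak{r}_{3,\lambda}$) is likewise asserted rather than derived. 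So the proposal proves part (2) except when $Tf_1=0$, and only outlines part (1). (Note that this survey does not reproduce a proof of this theorem --- it is quoted from \cite{AD} --- so the comparison here is with what a complete argument would require rather than with an in-paper proof.)
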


\medskip

\section{CKY 2-forms on low dimension}\label{ckylowdimension}

In this section we exhibit the classifications of CKY 2-forms on non-abelian metric Lie algebras in dimensions $3$, $4$, and $5$. 

\medskip

\subsection{Dimension 3}
We summarize the classification obtained in \cite[Section 5]{ABD} in the Table \ref{tabla_dim3}. In the last column of Table \ref{tabla_dim3}, the letter P means ``parallel''. Note also that CKY $2$-forms in a $3$-dimensional Lie algebra are $*$-dual of CKY $1$-forms, which are equivalent to Killing vectors.

{\scriptsize
\begin{table}[h!]
		\begin{tabular}{|c|c|c|c|c|}
			\hline
			\hline
			Lie algebra  & Lie bracket & metric & $\omega$ & {\scriptsize CKY, KY or P} \\ \hline \hline
$\mathfrak{e}(2)$ & $[f_3,f_1]=f_2$, $[f_3,f_2]=-f_1$& $g_t=\left(\begin{array}{ccc} t&0&0\\0&t&0\\0&0&t\end{array}\right)$, $t>0$& $f^1\wedge f^2$& P\\ \hline
			
$\mathfrak{su}(2)$& 
$ [f_1,f_2]=f_3 ,\
 [f_2,f_3]=f_1,\ [f_3,f_1]=f_2 $&
$g_t=\left(\begin{array}{ccc} 1&0&0\\0&1&0\\0&0&t\end{array}\right)$, $t>0$		&  $f^1\wedge f^2$ & CKY \\
\hline
$\mathfrak{sl}(2,\R)$& 
$ [f_1,f_2]=-f_3 ,\
 [f_2,f_3]=f_1,\ [f_3,f_1]=f_2 $

		&
 $g_t=\left(\begin{array}{ccc} 1&0&0\\0&1&0\\0&0&t\end{array}\right)$, $t>0$		&  $f^1\wedge f^2$ & CKY \\

 \hline
$\mathfrak{h}_3$& $[f_1,f_2]=f_3$&  $g_q=\left(\begin{array}{ccc} 1&0&0\\0&1&0\\0&0&q^2\end{array}\right)$, $q>0$& $f^1\wedge f^2$&CKY 
		\\ \hline
		
\multirow{3}{*}{$\mathfrak{aff}(\mathbb{R})\times\R	$} &  \multirow{3}{*}{$[f_1,f_2]=f_2 $}&	 \multirow{3}{*}{$g_{1,t}=\left(\begin{array}{ccc} 1&0&0\\0&1+t^2&t\\0&t&1\end{array}\right)$, $t\geq 0$} &  \multirow{3}{*}{$f^1\wedge f^2$} & \multirow{3}{*}{ P if $t=0$}\\ 
&&&&\\
& & & &   CKY  if $t \neq 0$\\ 
\hline			
		\end{tabular}
	\caption{CKY tensors on non abelian 3-dimensional Lie algebras}
	\label{tabla_dim3}
\end{table} }

\begin{rem}
According to \cite{ABD} all these 2-forms are closed and in the case where $\omega$ is strict CKY, their Hodge dual are contact forms. 
\end{rem}

\medskip

\subsection{Dimension 4}
In \cite[Corollary 4.5]{AD}, the authors concluded that any CKY tensor in a 4-dimensional metric Lie algebra is parallel. This is also a consequence of a more general result,  \cite[Lemma 3.7]{ABM} shows that every CKY 2-form of constant norm  on a 4-dimensional Riemannian manifold is always parallel.  
On the other hand, a classification of parallel tensors on 4-dimensional metric Lie algebras has been done in \cite{Herrera}.  We exhibit this classification in the  Table \ref{tabla_dim4}. The basis $\{e_1,f_1,e_2,f_2,\xi\}$ is orthogonal and $|\xi|=1$. 

{\scriptsize
\begin{table}[h!]
\begin{tabular}{|c|l|c|c|}
\hline\hline

Lie algebra & Lie bracket&metric & parallel $2$-form $\omega$  \\
\hline \hline
  \multirow{2}{*}{$ \R\times \mathfrak{e}(2)$} &\multirow{2}{*}{$\begin{matrix}[e_1 , e_2 ] = -f_2 ,
[e_1 , f_2 ] = e_2\end{matrix}$ }& \multirow{2}{*}{$|e_i|^2=|f_i|^2=t$ }
  &  \multirow{2}{*}{ $a_1e^{1}\wedge f^{1}+a_2e^{2}\wedge f^2$ } \\
  &&&\\
  &&$t>0$&$a_1,a_2\geq 0$\\
 \hline 
\multirow{2}{*}{$\R^2\times \aff(\R)$} &\multirow{2}{*}{$[e_2 , f_2 ] = f_2$}& \multirow{2}{*}{$|e_i|^2=|f_i|^2=t$  }
   & \multirow{2}{*}{$a_1e^{1}\wedge f^{1}+a_2e^{2}\wedge f^2$}\\
   &&&\\
   &&$t>0$& $a_1,a_2\geq 0$  \\ 
 \hline
\multirow{4}{*}{$ \mathfrak{r}'_{4,\lambda,0}$ \;\text{with} \; $\lambda>0$ } & \multirow{2}{*}{$[e_1 , f_1 ] = \lambda f_1, \; [e_1 , f_2 ] = e_2$} & \multirow{2}{*}{$|e_i|^2=|f_i|^2=t$} & \multirow{2}{*}{$a_1e^{1}\wedge f^{1}+a_2e^{2}\wedge f^2$}\\
&&&\\
&$[e_1 , e_2 ] = -f_2$&$t>0$&$a_1,a_2\geq 0$ \\
&&&\\
 \hline
 
 \multirow{4}{*}{$\aff(\R)\times \aff(\R)$}&\multirow{4}{*}{$[e_1 , f_1 ] = f_1, \; [e_2 , f _2 ] = f_2$} & \multirow{2}{*}{$|e_i|^2=t, \,|f_i|^2=ts$} & \multirow{2}{*}{$a_1e^{1}\wedge f^{1}+a_2e^{2}\wedge f^2$}\\
&&\multirow{3}{*}{$s,t>0, \, s\leq 1$} & \multirow{2}{*}{$a_1,a_2\geq 0\, \mbox{if } s<1$}\\
&&&\multirow{2}{*}{$a_1\geq a_2\geq 0\, \mbox{if } s=1$}\\
&&&\\
 
\hline
\multirow{4}{*}{$\mathfrak{d}_{4,\frac{1}{2}}$} & \multirow{2}{*}{$[e_1,f_1]=e_2, \;  [f_2,e_1]=\frac{1}{2}e_1$}&\multirow{2}{*}{$|e_i|^2=|f_i|^2=t$}&\multirow{2}{*}{$c(e^1\wedge f^1-e^2\wedge f^2)$}\\
&&&\\
&$[f_2,f_1]=\frac{1}{2}f_1, \; [f_2,e_2]=e_2$ &  $t>0$ &  $c>0$\\
&&&\\
\hline
 
\multirow{4}{*}{$\mathfrak{d}_{4,2}$}  &\multirow{2}{*}{$[e_1,f_1]=e_2, \; [f_2,e_1]=e_1$} & \multirow{2}{*}{$|e_i|^2=|f_i|^2=t$}&\multirow{2}{*}{$c(e^1\wedge f^2+f^1\wedge e^2)$}\\
&&&\\
&$[f_2,f_1]=-\frac{1}{2}f_1, \;[f_2,e_2]=\frac{1}{2}e_2$ & $t>0$ &  $c>0$\\ 
&&&\\
\hline 

\multirow{4}{*}{$\mathfrak{d}'_{4,\frac{\delta}{2}}$\; \text{with} \; $\delta>0$} &\multirow{2}{*}{$[e_1,f_1]=e_2\; [f_2,e_1]=\frac{1}{2}e_1-\frac{1}{\delta}f_1$}&\multirow{2}{*}{$|e_i|^2=|f_i|^2=t$}&\multirow{2}{*}{$c\left(e^1\wedge f^1-e^2\wedge f^2\right)$}\\
&&&\\
&$[f_2,f_1]=\frac{1}{\delta}e_1+\frac{1}{2}f_1, \; [f_2,e_2]=e_2$ & $t>0$&
$c\neq0$\\
&&&\\
\hline
\end{tabular}
\caption{Parallel $2$-forms on non-abelian 4-dimensional Lie algebras. 
The $2$-form $\omega$	is given in the metric dual basis of $1$-forms		$\{e^1,f^1,e^2,f^2\}$}
\label{tabla_dim4}
\end{table}
}

\medskip

\subsection{Dimension 5}
We studied in  \cite[Theorem 3.2]{HO}  5-dimensional metric Lie algebras admitting CKY 2-form in two cases, namely when $\xi\in \mathfrak{z}$ (see \eqref{xi}) and thus, the dimension of the center $\z$ is one, or when $\dim\z>1$.

In the first case we used Theorem \ref{teorema dotti-andrada} and the classification of parallel skew-symmetric tensors made in Table \ref{tabla_dim4} to construct  5-dimensional metric Lie algebras with  $1$-dimensional center admitting a CKY 2-form whose co-differential lies in the
dual of the center, that is, $\xi\in \mathfrak{z}$.

\begin{teo}\cite[Theorem 3.2]{HO}\label{dim5-extension}
	Let $(\g,\pint)$ be a 5-dimensional metric Lie algebra. If $(\g,\pint)$ admits a strict CKY tensor T with associated vector $\xi\in\z$, then $(\g,\pint)$ is isometrically isomorphic to one and only one of the metric Lie algebras in Table \ref{NotacionNueva}.
	Moreover, the CKY tensor T is uniquely determined, up to scaling, by the corresponding metric Lie algebra and it is given in the last column of Table \ref{NotacionNueva} (as a $2$-form). 
\end{teo}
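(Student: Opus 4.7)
The plan is to apply the converse direction of Theorem \ref{teorema dotti-andrada} to reduce the classification problem to the 4-dimensional setting, where everything is already under control thanks to Table \ref{tabla_dim4}. Concretely, since $\xi\in\z\subset\g'$ (because $\g$ is necessarily unimodular in the strict CKY case, compare the discussion after \eqref{xi}), and by Theorem \ref{teorema dotti-andrada} the center of $\g$ is then spanned by $\xi$, we may write $\g=\h\oplus_\mu\R\xi$ with $\h=\xi^\perp$ a 4-dimensional metric Lie algebra, $S=T|_\h$ an invertible KY tensor on $\h$, and $\mu(x,y)=-2\la S^{-1}x,y\ra$ the (automatically closed, by \cite[Corollary 2.2]{AD}) defining cocycle.

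First I would invoke Corollary 4.5 of \cite{AD} (or the more general \cite[Lemma 3.7]{ABM}): in dimension 4, every CKY tensor is parallel. Hence $S$ is an invertible parallel skew-symmetric endomorphism of a 4-dimensional metric Lie algebra $\h$. Such pairs $(\h,\la\cdot,\cdot\ra,S)$ are already classified up to equivalence in \cite[Section 3]{Herrera}, summarized in Table \ref{tabla_dim4}, so I would go through that list case by case. The abelian $\h=\R^4$ case (not listed in Table \ref{tabla_dim4}) must be treated separately: invertibility of $S$ forces $\h$ to be even-dimensional, which it is, and the resulting central extension then yields the Heisenberg-type models that belong in Table \ref{NotacionNueva}.

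Next, for each row of Table \ref{tabla_dim4}, I would restrict to the invertible representatives (i.e.\ impose $a_1,a_2\neq 0$, respectively $c\neq 0$), compute $S^{-1}$ in the given orthonormal basis, then read off $\mu$ and form the bracket of $\g=\h\oplus_\mu\R\xi$ using \eqref{extensioncon mu}. This is a mechanical but careful computation, producing a one- or two-parameter family of 5-dimensional brackets for each row. Finally I would bring each family to a normalized form by an isometric isomorphism rescaling the parameters (using, e.g., the freedom to dilate $\xi$ and to rescale the parameters $a_i$, $s$, $t$, $c$), thereby identifying the entries of Table \ref{NotacionNueva}; and verify that the resulting normalized metric Lie algebras are pairwise non-isomorphic, typically by inspecting invariants such as the isomorphism type of $\h=\xi^\perp$ (which is intrinsic since $\xi$ generates $\z$), the dimension of the derived algebra, and the spectrum of $\ad_{f_1}|_\u$ when an almost-abelian structure is present.

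The main obstacle will be the uniqueness-up-to-isometric-isomorphism part: equivalent parallel tensors $S$ on $\h$ give isometrically isomorphic extensions, but one must also rule out hidden identifications coming from isometric isomorphisms of $\g$ that do \emph{not} preserve the splitting $\h\oplus\R\xi$. Since $\xi$ spans the center of $\g$ and $\h=\xi^\perp$ is metrically determined, any isometric isomorphism of $\g$ must preserve this splitting (up to sign of $\xi$), which reduces the ambiguity to the equivalence relation \eqref{equiv} on $(\h,S)$ already catalogued in Table \ref{tabla_dim4}. The uniqueness of $T$ up to scaling then follows because $T\xi=0$ and $T|_\h=S$ is, in every row of Table \ref{tabla_dim4}, determined up to scaling once $\h$ and the inner product are fixed (in the rows where two parameters $a_1,a_2$ appear, the condition of invertibility plus the equivalence relation fixes the ratio).
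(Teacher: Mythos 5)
Your proposal is correct and follows essentially the same route as the paper: apply the converse of Theorem \ref{teorema dotti-andrada} to reduce to an invertible KY (hence, by the 4-dimensional rigidity result, parallel) tensor on $\h=\xi^\perp$, then run through the classification of Table \ref{tabla_dim4} together with the abelian case $\h=\R^4$, and normalize the resulting central extensions up to isometric isomorphism. The only slip is the parenthetical claim $\z\subset\g'$, which is neither true in general nor needed, since the hypothesis $\xi\in\z$ already suffices to invoke the converse direction of Theorem \ref{teorema dotti-andrada}.
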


{\scriptsize
	\begin{table}[h!]
		\begin{tabular}{|c| c| c|}
			\hline \hline
			{\scriptsize Lie algebra }  &{\scriptsize Metric  }&  {\scriptsize strict CKY $2$-form $\omega$} \\ \hline \hline 
			
			$\begin{matrix} (\mathfrak{g}_3,\pint_{t,a_1,a_2})\\ 
				&\\
				[E_1,E_2]=-E_3\\
				[E_1,E_3]=E_2\\
				[E_1,E_4]=-E_5\\
				[E_2,E_3]=-E_5\\
				a_1,a_2,t>0\end{matrix}$
			&
			
			$\begin{pmatrix} 
				t & 0 & 0 & 0 & 0 \\
				0&t& 0&0 & 0\\
				0&0&t&0&0 \\
				0&0& 0& \frac{a_1^2t}{a_2^2}&0 \\
				0&0& 0& 0&\frac{4t^2}{a_2^2} \\
			\end{pmatrix}$
			
			& $\frac{a_1^2 t}{a_2}e^{14}+ a_2te^{23}$ \\
			\hline 
			
			$\begin{matrix} (\mathfrak{g}_2,\pint_{t,a_1,a_2})\\ 
				&\\
				[E_1,E_2]=E_2-E_5\\
				[E_3,E_4]=-E_5\\
				a_1,a_2,t>0\end{matrix}$
			&
			
			$\begin{pmatrix} 
				t & 0 & 0 & 0 & 0 \\
				0&t& 0&0 \\
				0&0&a_1^2t&0&0 \\
				0&0& 0& \frac{t}{a_2^2}&0 \\
				0&0& 0& 0&\frac{4t^2}{a_2^2} \\
			\end{pmatrix}$
			& $ta_2e^{12}+ \frac{ta_1^2}{a_2}e^{34}$ \\
			\hline

			$\begin{matrix} (\mathfrak{g}_8^{\frac1\lambda},\pint_{t,a_1,a_2})\\ 
				&\\
				[E_1,E_4]=-E_1-E_5\\
				[E_2,E_3]=-E_5\\
				[E_2,E_4]=\frac1\lambda E_3\\
				[E_3,E_4]=-\frac1\lambda E_2\\
				\lambda,a_1,a_2,t>0\end{matrix}$
			&
			
			$\begin{pmatrix} 
				(\frac{a_1\lambda }{a_2})^2t & 0 & 0 & 0 & 0 \\
				0&t& 0&0 \\
				0&0&t&0&0 \\
				0&0& 0& \frac{t}{\lambda^2}&0 \\
				0&0& 0& 0&\frac{4t^2}{a_2^2} \\
			\end{pmatrix}$
			& $-\frac{a_1^2 t}{a_2}e^{14}- a_2te^{23}$ \\
			\hline

			$\begin{matrix} (\mathfrak{g}_4,\pint_{s,t,a_1,a_2}) \\ 
				&\\
				[E_1,E_2]=E_2-E_5\\
				[E_3,E_4]=E_4-E_5\\
				t,s>0, s \leq 1 \\
				a_1,a_2> 0  \mbox{ if } s<1, \mbox{ or } \\
				a_1\geq a_2> 0  \mbox{ if } s=1\end{matrix}$
			&
			
			$\begin{pmatrix} 
				t & 0 & 0 & 0 & 0 \\
				0&ta_1^2& 0&0&0 \\
				0&0&ts&0&0 \\
				0&0& 0& \frac{ta_2^2}{s}&0 \\
				0&0& 0& 0& 4t^2 \\
			\end{pmatrix}$
			& $ta_1^2e^{12}+ ta_2^2e^{34}$ \\
			\hline

			$\begin{matrix} (\mathfrak{g}_5,\pint_{t,c})\\ 
				&\\
				[E_1,E_2]=E_3-E_5\\
				[E_1,E_4]=-\frac{1}{2}E_1\\
				[E_2,E_4]=-\frac{1}{2}E_2\\
				[E_3,E_4]=-E_3+E_5\\
				t,c>0\end{matrix}$
			&
			
			$\begin{pmatrix} 
				t & 0 & 0 & 0 & 0 \\
				0&t& 0&0&0 \\
				0&0&t&0&0 \\
				0&0& 0& t&0 \\
				0&0& 0& 0&\frac{4t^2}{c^2} \\
			\end{pmatrix}$
			& $tc(e^{12}-e^{34})$ \\
			\hline

			$\begin{matrix} (\mathfrak{g}_6,\pint_{t,c}) \\ 
				&\\
				[E_1,E_2]=E_3\\
				[E_1,E_4]=-2E_1\\
				[E_2,E_3]=-E5\\
				[E_2,E_4]=E_2\\
				[E_3,E_4]=-E_3\\
				t,c>0\end{matrix}$
			&
			
			$\begin{pmatrix} 
				t+\frac{4t^2}{c^2} & 0 & 0 & 0 & \frac{4t^2}{c^2} \\
				0&t& 0&0 &0\\
				0&0&t&0&0 \\
				0&0& 0& 4t&0 \\
				\frac{4t^2}{c^2}&0& 0& 0&\frac{4t^2}{c^2} \\
			\end{pmatrix}$
			& $tc(2e^{14}+e^{23})$ \\
			\hline

			$\begin{matrix} (\mathfrak{g}_7^{\delta},\pint_{t,c})\\ 
				&\\
				[E_1,E_2]=E_3-E_5\\
				[E_1,E_4]=-\frac\delta2 E_1+E_2\\
				[E_2,E_4]=-E_1-\frac\delta2 E_2 \\
				[E_3,E_4]=-\delta E_3+\delta E_5\\
				\delta,t>0,c\neq 0\end{matrix}$
			&
			
			$\begin{pmatrix} 
				t & 0 & 0 & 0 & 0 \\
				0&t& 0&0&0 \\
				0&0&t&0&0 \\
				0&0& 0& \delta^2t &0 \\
				0&0& 0& 0&\frac{4t^2}{c^2} \\
			\end{pmatrix}$
			& $tc(e^{12}-\delta e^{34})$ \\
			\hline

			$\begin{matrix} (\mathfrak{h}_5,\pint_{a_1,a_2})\\ 
				&\\
				[E_1,E_2]=E_5\\
				[E_3,E_4]=E_5 \\
				a_1,a_2>0\end{matrix}$
			&
			
			$\begin{pmatrix} 
				a_1^2 & 0 & 0 & 0 & 0 \\
				0&\frac{1}{a_2^2}& 0&0&0 \\
				0&0&1&0&0 \\
				0&0& 0& 1&0 \\
				0&0& 0& 0&\frac{4}{a_2^2} \\
			\end{pmatrix}$
			& $\frac{a_1^2}{a_2}e^{12}+a_2e^{34} $ \\
			\hline
			
		\end{tabular}
		\caption{ {\scriptsize The metrics are given in the basis $\{E_1,E_2,E_3,E_4,E_5\}$ and the strict CKY $2$-forms are given in the metric dual basis $\{e^1,e^2,e^3,e^4,e^5\}$ where $e^{ij}=e^i\wedge e^j$. }}
		\label{NotacionNueva}
\end{table}}

Note that all these Lie algebras in Table \ref{NotacionNueva} admit a Sasakian structure for some choice of the parameters.  Indeed, these
Sasakian structures are obtained when the parallel skew-symmetric tensors on $\h=\xi^\perp$ are
in addition complex structures (for example the Sasakian structure on $\g_3$ is obtained with $a_1 = a_2 = 1$ in the first row of Table \ref{NotacionNueva}). Therefore, Theorem \ref{dim5-extension} recovers the classification of Sasakian five-dimensional Lie algebras made in \cite[Section 3.1]{AFV}.  

In \cite{AFV}, the authors also
showed that the simply connected Lie group associated to the unimodular Lie algebra
$\g_3$ admits lattices, that is, co-compact discrete subgroups. The other unimodular Lie
algebra in Table \ref{NotacionNueva} is $\h_5$, and it is known that the Heisenberg Lie group
$H_5$ with Lie algebra $\h_5$ also admits lattices.

We also studied the space of solutions of CKY $2$-forms $\mathcal{CK}^2(\g,\pint)$ for those metric Lie algebras.
From Theorem \ref{dim5-extension} we have that there is only one strict CKY 2-form,
up to scaling. Indeed, any non-zero multiple of a strict CKY tensor is again a strict CKY tensor.
In the next result, we showed that each metric Lie algebra in Table \ref{NotacionNueva} does not admit
any other CKY 2-forms, including KY 2-forms and strict CKY 2-forms whose co-differential is not necessary in the dual of the center.

\begin{teo}\cite[Theorem 6.2]{HO}
Let $(\g,\pint)$ be a 5-dimensional metric Lie algebra admitting a
strict CKY 2-form, such that its co-differential lies in the dual of the center, then
$\mathcal{K}^2(\g,\pint)= 0$ and $\mathcal{*K}^2(\g,\pint)=\mathcal{CK}^2(\g,\pint)$ is 1-dimensional. In particular, $(\g,\pint)$ does not admit non-zero parallel tensors.
\end{teo}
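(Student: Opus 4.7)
The plan is to proceed by case analysis through the eight metric Lie algebras listed in Table \ref{NotacionNueva}. For each such $(\g,\pint)$, I parametrize an arbitrary skew-symmetric endomorphism $T$ of $\g$ (equivalently a $2$-form $\omega$) together with an arbitrary $\theta \in \g^*$, compute the Levi-Civita operators $\nabla_{E_i}$ once and for all from the Koszul formula \eqref{koszul} using the structure constants given in the table, and reduce the CKY equation \eqref{ckyequationliealgebra} to an explicit linear system in the fifteen unknowns (ten entries of $T$ plus five components of $\theta$).

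Before brute-forcing, I apply the structural constraints recalled in Section \ref{xisection} to trim the parameter space. By \cite[Proposition 4.1, Corollary 4.2]{AD} and Theorem 2.1, for a strict CKY tensor one has $\theta \circ T = 0$, $T\xi = 0$, $T|_{\xi^\perp}$ invertible, and $\xi^\perp$ stable under $\ad_\xi$. In each row of Table \ref{NotacionNueva} the center $\z$ is one-dimensional and explicit; for the case $\xi \in \z$ the desired uniqueness follows from the converse part of Theorem \ref{teorema dotti-andrada}: the bracket restricted to $\h = \xi^\perp$ is already fixed, so the central-extension $2$-form $\mu(x,y) = -2\la S^{-1}x,y\ra$ is pinned down by the central component of $[\cdot,\cdot]|_\h$, forcing $S$ (hence $T$) to be determined up to scaling. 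To rule out $\xi \notin \z$, I check directly in each case that the given bracket relations, combined with $T\xi = 0$, invertibility of $T|_{\xi^\perp}$, and $\ad_\xi$-stability of $\xi^\perp$, leave no admissible $\xi$ outside $\R E_5$. Specializing the system to $\theta = 0$ collapses it to the KY equation, which in each of the eight Lie algebras forces $T = 0$, proving $\mathcal{K}^2(\g,\pint) = 0$.

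It remains to identify $\mathcal{CK}^2$ with $\mathcal{*K}^2$. For each of the eight explicit $\omega$ appearing in the last column of Table \ref{NotacionNueva}, a direct computation via $\mathrm{d}e^k(E_i,E_j) = -e^k([E_i,E_j])$ shows $\mathrm{d}\omega = 0$; thus $\omega \in \mathcal{*K}^2(\g,\pint)$, and combined with the $1$-dimensionality established above this yields $\mathcal{CK}^2(\g,\pint) = \mathcal{*K}^2(\g,\pint) = \R\omega$. The final assertion on parallel tensors is then immediate, since any parallel $2$-form is automatically Killing, so $\mathcal{K}^2 = 0$ forbids non-zero parallel tensors.

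The main obstacle is computational rather than conceptual: even after the structural reductions, one must solve eight linear systems whose coefficients depend rationally on the free parameters of the metric ($t$, $s$, $a_1$, $a_2$, $c$, $\delta$, $\lambda$) and verify that the $1$-dimensionality of the solution space is uniform in these parameters. The subtlest point is ruling out CKY tensors with $\xi$ outside the center, where the converse of Theorem \ref{teorema dotti-andrada} does not directly apply and one must exploit, case by case, the tension between $T|_{\xi^\perp}$ being invertible and $\xi^\perp$ being $\ad_\xi$-stable inside the explicit bracket of each row.
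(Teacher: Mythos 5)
Your proposal is correct and follows essentially the same route as the paper's: reduce via Theorem \ref{dim5-extension} to the eight families of Table \ref{NotacionNueva}, obtain uniqueness of the strict CKY tensor with $\xi\in\z$ from the converse part of Theorem \ref{teorema dotti-andrada}, rule out case by case both KY tensors and strict CKY tensors with $\xi\notin\z$, and check that each tabulated $\omega$ is closed so that $\mathcal{*K}^2(\g,\pint)=\mathcal{CK}^2(\g,\pint)$, with the parallel statement following since parallel forms are Killing. The only caveat is that the eight parameter-dependent linear systems on which the claims $\mathcal{K}^2(\g,\pint)=0$ and the exclusion of $\xi\notin\z$ rest are asserted rather than actually solved, but the strategy is sound and matches the original argument.
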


For the second case, that is $\dim\z>1$,
we classified in  \cite[Theorem 3.2]{HO} the $5$-dimensional metric Lie algebras with
center of dimension greater than one admitting strict CKY tensors. In addition, we
determined all possible CKY tensors on these metric Lie algebras. In particular, we
exhibited the first examples of CKY 2-forms on metric Lie algebras which do not admit
any Sasakian structure.

It is easy to see that if a metric Lie algebra admits a strict CKY tensor and $\dim\z>1$, then $\xi\perp\z$  \cite[Lemma 4.1]{HO}. Moreover, if $\g$ is 5-dimensional, then $\dim\z\leq3$. We proved in \cite[Lemma 5.1]{HO}
that a 5-dimensional metric Lie algebra $(\g,\pint)$ with $\dim \z = 3$ does not
admit any strict CKY tensor. Therefore, the only interesting case is $\dim\z=2$. In this case we showed that:

\begin{teo}\cite[Theorem 5.4]{HO}\label{dim5-center2}
	Let $(\g,\pint)$ be a $5$-dimensional metric Lie algebra with $\dim\z>1$. If $(\g,\pint)$ admits a strict CKY tensor, then $(\g,\pint)$ is isometrically isomorphic to one and only one of the metric Lie algebras in Table \ref{tabla_dim5_xi_no_central}.
	Moreover, the CKY tensor is uniquely determined by the metric, up to scaling.
\end{teo}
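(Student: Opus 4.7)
The plan is to combine the structural constraints already obtained in \cite{HO} with a careful case analysis driven by the action of $\ad_\xi$ on $\xi^\perp$. First, by \cite[Lemma 4.1]{HO} and \cite[Lemma 5.1]{HO}, the hypotheses $\dim \z > 1$ together with the existence of a strict CKY tensor already force $\dim \z = 2$ and $\xi \in \g' \cap \z^\perp$. This yields an orthogonal decomposition $\g = \R\xi \oplus \z \oplus V$ with $V = \xi^\perp \cap \z^\perp$ two-dimensional, and we may work in an orthogonal basis $\{\xi, z_1, z_2, v_1, v_2\}$ adapted to it.

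The next step is to exploit the algebraic constraints on $T$. The result of \cite[Theorem 4.3]{AD} recalled above provides $T\xi = 0$, the $\ad_\xi$-stability of $\xi^\perp$, and the invertibility of $T|_{\xi^\perp}$. A direct examination of \eqref{ckyequationliealgebra} with $y \in \z$ (so that $[y, \cdot] = 0$ and Koszul's formula \eqref{koszul} simplifies drastically) shows that $T$ preserves $\z$, hence also $V$ by skew-symmetry. Since $T|_\z$ and $T|_V$ are invertible skew-symmetric endomorphisms of two-dimensional subspaces, each is a rotation of angle $\pm \pi/2$ up to a positive scalar, and the bases $\{z_1,z_2\}$, $\{v_1,v_2\}$ can be chosen so that $T$ acquires a diagonal block normal form determined by two positive scalars.

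The heart of the proof is to translate \eqref{ckyequationliealgebra} into polynomial relations among the structure constants. Because $\z$ is central, all non-trivial brackets are among $\xi, v_1, v_2$; write
\[
\ad_\xi v_i = A_i z_1 + B_i z_2 + C_i v_1 + D_i v_2, \qquad [v_1,v_2] = \alpha z_1 + \beta z_2 + \gamma \xi + \varepsilon_1 v_1 + \varepsilon_2 v_2.
\]
Evaluating \eqref{ckyequationliealgebra} on the various triples from $\{\xi, z_1, z_2, v_1, v_2\}$ via Koszul's formula produces a system relating these constants, the eigenvalues of $T$, the norms of the basis vectors, and the scalar $\theta(\xi) = |\xi|^2$ coming from \eqref{xi}. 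The constraint $\xi \in \g'$ forces a nonzero contribution in these brackets (typically $\gamma \neq 0$), and combined with the invertibility of $T|_{\xi^\perp}$ this rigidifies sharply the admissible configurations.

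The main obstacle is the systematic resolution of this polynomial system and its reduction to a finite list of inequivalent normal forms. I would split the analysis according to the Jordan type of $\ad_\xi|_V$ --- zero, nilpotent of rank one, real semisimple, or complex semisimple --- intersected with the type of $[v_1,v_2]$ modulo $\z$. In each branch the residual freedom consists of the orthogonal rescalings in $\z$ and in $V$ compatible with the normal form of $T$, and this is precisely the freedom needed to match the structure constants with exactly one row of Table \ref{tabla_dim5_xi_no_central}. Uniqueness of $T$ up to scaling follows because, in each canonical form, the combined conditions $T\xi = 0$, $T(\z)\subset \z$, $T(V)\subset V$ together with \eqref{ckyequationliealgebra} leave only a one-parameter family of admissible CKY tensors.
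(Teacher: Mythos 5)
Your overall strategy---reduce to $\dim\z=2$ with $\xi\perp\z$, fix an adapted orthogonal basis, and turn \eqref{ckyequationliealgebra} into a polynomial system in the structure constants and the entries of $T$---is essentially the route taken in \cite{HO}. However, there is a genuine error at the step where you claim that evaluating \eqref{ckyequationliealgebra} with $y\in\z$ forces $T(\z)\subset\z$ (and hence $T(V)\subset V$ with $V=\xi^\perp\cap\z^\perp$). This is false, and it is contradicted by every entry of Table \ref{tabla_dim5_xi_no_central}: each of the strict CKY $2$-forms listed there contains non-zero mixed terms of the form $z^i\wedge x^*$ or $z^i\wedge y^*$ (for instance the term $-2r(z^1\wedge x^* + z^2\wedge y^*)$ with $r>0$ in the $L_{5,9}$ row), which says precisely that $Tz_i$ has a non-zero component in $V$. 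The computation does not yield what you claim because, although $[z,\cdot]=0$ for $z\in\z$, the covariant derivative in the $\z$-directions does not vanish: by \eqref{koszul} one has $2\la \nabla_x z, w\ra = \la [w,x], z\ra$, which is generically non-zero, so the terms $T\nabla_x z$ and $\nabla_z (Tx)$ survive in $(\nabla_xT)z+(\nabla_zT)x$ and no $T$-invariance of $\z$ can be extracted. Consequently your normal form for $T$ (two block rotations, i.e.\ only two parameters) is far too restrictive; the correct ansatz, and the one used in \cite{HO}, is a full skew-symmetric endomorphism of $\xi^\perp$ with six free entries, since the only a priori information is $T\xi=0$ from \cite[Corollary 4.2]{AD}. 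With your restricted ansatz the resulting polynomial system would exclude all the actual solutions and the classification would come out empty.

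A secondary issue: you assert $\xi\in\g'$ at the outset, but that conclusion rests on \cite[Lemma 2.3]{ABD}, which requires $\g$ to be unimodular; unimodularity is not a hypothesis here and is only verified a posteriori in \cite{HO} for the family $\g_{r,s}$ that solves the system, so you cannot use $\gamma\neq 0$ as an input to rigidify the brackets. The remainder of your plan (case analysis on $\ad_\xi|_{\xi^\perp}$, reduction to the three families according to the sign of $s-r$, and uniqueness of $T$ up to scaling) is in the right spirit, but it has to be carried out with the full six-parameter $T$ and without assuming block-diagonality.
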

\begin{tiny}
\begin{table}[h!]
	\begin{tabular}{|c | c | c| }
		\hline \hline
		{\scriptsize   Lie algebra }   &{\scriptsize  Metric } & {\scriptsize strict CKY $2$-form} \\ \hline \hline 
		
			$\begin{matrix} (L_{5,9},\pint_r):\\ 
			&\\ 
				[X,E]=Z_1, &
				[Y,E]=Z_2, \\
				[X,Y]=E,
				&
				r>0\end{matrix}$
	
& 
$\begin{pmatrix}
	r^2&0&0&0&0\\
	0&r^4&0&0&0\\
	0&0&r^4&0&0\\
	0&0&0&1&0\\
	0&0&0&0&1
\end{pmatrix}$
&

			 $\begin{matrix}6r^3z^1\wedge z^2 \\- 2r(z^1\wedge x^* + z^2\wedge y^*) \\
			+ \frac{4}{r}x^*\wedge y^*\end{matrix}$
		\\ \hline
						
			$\begin{matrix}
				\left(\R^2 \times\mathfrak{su}(2),\pint_{r,s}\right): \\ & \\
				[E,X]=Y, \\
				[X,Y]=E\\ 
				[Y,E]=X, \\ a_4=\frac{s^2-r^2}{s},\,s>r>0\end{matrix}$
	
		&
		$\begin{array}{c}\begin{pmatrix}
	\frac{1}{a_4^2}&0&0&0&0\\
	0&1&0&\frac{r}{\sqrt{a_4^3s}}&0\\
	0&0&1&0&-\frac{r}{\sqrt{a_4^3s}}\\
	0&\frac{r}{\sqrt{a_4^3s}}&0&\frac{r^2+a_4^2}{a_4^3s}&0\\
	0&0&-\frac{r}{\sqrt{a_4^3s}}&0&\frac{r^2+a_4^2}{a_4^3s}
\end{pmatrix}\end{array}$
		&

			 $\begin{matrix}\frac{2(s^2+2r^2)}{r^2s} z^1\wedge z^2 - \\ \frac{6r}{(s^2-r^2)^{3/2}}(z^1\wedge y^* + z^2\wedge x^*)  \\ -\frac{2(s^2+2r^2)}{s(s^2-r^2)^3}x^*\wedge y^*\end{matrix}$		
		
	\\ \hline

			$\begin{matrix}
				\left(\R^2 \times\mathfrak{sl}(2,\R),\pint_{r,s}\right):\\&\\
				[E,X]=Y\\
				[X,Y]=-E\\ 
				[Y,E]=X\\
a_4=\frac{s^2-r^2}{s}, \, 
		r>s>0\end{matrix}$

		& $\begin{array}{c}
		\begin{pmatrix}
	\frac{1}{a_4^2}&0&0&0&0\\
	0&1&0&\frac{r}{\sqrt{|a_4|^3s}}&0\\
	0&0&1&0&\frac{r}{\sqrt{|a_4|^3s}}\\
	0&\frac{r}{\sqrt{|a_4|^3s}}&0&\frac{r^2+a_4^2}{|a_4|^3 s}&0\\
	0&0&\frac{r}{\sqrt{|a_4|^3s}}&0&\frac{r^2+a_4^2}{|a_4|^3s}
\end{pmatrix}\end{array}$
		&
			 $\begin{matrix}\frac{2(s^2+2r^2)}{r^2s} z^1\wedge z^2 +  \\
			 \frac{6r}{(r^2-s^2)^{3/2}}(z^1\wedge y^* - z^2\wedge x^*)  \\ -\frac{2(s^2+2r^2)}{s(s^2-r^2)^3}x^*\wedge y^*\end{matrix}$
						
		 \\
	 \hline
	\end{tabular}
	\caption{Notation: $\{e,z^1,z^2,x^*,y^*\}$ is the metric dual basis of $\{E,Z_1,Z_2,X,Y\}$.}
	\label{tabla_dim5_xi_no_central}
\end{table}
\end{tiny}

\begin{proof}[Sketch of proof]
	Let  $(\g,\pint)$ a $5$-dimensional metric Lie algebra with $\dim\z=2$ admitting an strict CKY tensor $T$.
	We can decompose $\g=\z\oplus\z^{\perp}$ with $\xi \in \z^{\perp}$, and $\xi^{\perp}=\z\oplus(\z^\perp\cap\xi^\perp)$.
	Consider the operators  $T | _{\xi^ {\perp}}$ and $\ad_\xi| _{\xi^ {\perp}}$ according to this decomposition. 
	Then, taking an orthonormal basis of $\xi^ {\perp}$ we have that the Lie brackets on $\g$ are encoded in a $3\times 5$ matrix (since $\dim\z=2$) and $T$ can be written as a skew-symmetric matrix determined by $6$ variables (since $T\xi=0$).
	
After solving the CKY equation we reduce the structure constants to only two parameters $r,s$, and $T$ is
uniquely determined by $r,s$. Equivalently, $(\g,\pint)$ is isometrically isomorphic to one and only one $(\g_{r,s} , \pint)$ with Lie brackets 
\begin{align}
	[\xi,x]&=rz_1+a_4y,&
	[\xi,y]&=rz_2-a_4x,&
	[x,y]&=s\xi,
\end{align}
where $r, s>0$, and  $a_4=\frac{s^2-r^2}{s}$. The $2$-form associated to $T$ is given by
\begin{equation*}
\omega=\frac{2(s^2+2r^2)}{r^2s}z^1\wedge z^2 + \frac{2}{r}(z^1\wedge x^* +z^2\wedge y^*) + \frac{4}{s}x^*\wedge y^*.
\end{equation*}

It can be shown that $\g_{r,s}$ are unimodular for all $r,s$ and 
$(\g_{r,s} , \pint)$ are pairwise non-isometrically isomorphic for $r,s>0$.
Finally, we consider three cases in order to get the final reduction. Indeed,  	if $r=s$ we obtain that $(\g_{r,s} , \pint)$ is isometrically isomorphic to $(L_{5,9},\pint_r)$. Simillarly, if $s>r>0$  we obtain $(\R^2 \times\mathfrak{su}(2),\pint_{r,s})$ and if  $r>s>0$ we obtain $(\R^2 \times\mathfrak{sl}(2,\R),\pint_{r,s})$.
\end{proof}

It is important to point out that these 5-dimensional metric Lie algebras exhibited in Theorem \ref{dim5-center2} represent the first explicit examples of Lie algebras carrying strict CKY 2-forms and not admitting any Sasakian structure. Note also that $L_{5,9}$ is a $3$-step nilpotent Lie algebra.

Moreover, the simply connected Lie groups associated to the  Lie algebras in Table \ref{tabla_dim5_xi_no_central} admit lattices, and therefore they can be used to
produce examples of compact manifolds admitting invariant CKY tensors. Indeed, lattices in the simply connected Lie group associated to $L_{5,9}$ were studied in \cite{Malcev}. Therefore, any nilmanifold obtained as a quotient of the simply Lie group connected Lie group with Lie algebra $L_{5,9}$
admits a CKY tensor induced by $T$, but  does not admit any Sasakian structure, since it is shown in \cite{CMdNMY}  that the only nilmanifolds admitting a Sasakian structure (not necessarily invariant) are quotients of the
Heisenberg group. 

For the compact manifold $\mathcal T_2 \times \mathcal S^3$ obtained from $\R^2 \times\mathfrak{su}(2,\R)$, where $\mathcal S^3$ is the 3-dimensional sphere and $T_2$ is the 2-dimensional torus, we know that this Lie
group does admit Sasakian structures (which are non-invariant), see \cite{BTF}. Finally, the Lie group $SL(2, \R) \times \R^2$ admits
lattices (see \cite{Mos75}), but it is not known in this case if the induced compact manifolds
admit non-invariant Sasakian structures.

On the other hand, for those  5-dimensional metric Lie algebras exhibited in Theorem \ref{dim5-center2}
we also analyzed the space of solutions of CKY $2$-forms $\mathcal{CK}^2(\g,\pint)$ and we obtained:

\begin{teo}\cite[Theorem 6.5]{HO}
	For any metric Lie algebras $(\g,\pint)$ in Theorem \ref{dim5-center2}, we have that $\mathcal{K}^2(\g,\pint)=0$ and $\mathcal{*K}^2(\g,\pint)=0$ whereas
	$\mathcal{CK}^2(\g,\pint)$ is $1$-dimensional.
\end{teo}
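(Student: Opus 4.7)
The plan is to reduce the theorem to two separate assertions: (A) the space $\mathcal{CK}^2(\g,\pint)$ is exactly $1$-dimensional; (B) the form $\omega$ in Table \ref{tabla_dim5_xi_no_central} is not closed, i.e.\ $d\omega \neq 0$. Since $\omega$ is strict CKY by construction, its codifferential is nonzero; hence, once (A) is granted, every nonzero element of $\mathcal{CK}^2$ is a scalar multiple of $\omega$ and is therefore strict, giving $\mathcal{K}^2(\g,\pint)=0$ immediately. Combining (A) with (B) and the inclusion $\mathcal{*K}^2\subset\mathcal{CK}^2$ then yields $\mathcal{*K}^2(\g,\pint)=0$. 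Assertion (B) is a short Chevalley--Eilenberg computation: applying $d$ to the explicit expression of $\omega$ in Table \ref{tabla_dim5_xi_no_central} produces a $3$-form whose coefficients are read off directly from the non-abelian brackets among $\{E,Z_1,Z_2,X,Y\}$, and at least one of them is manifestly nonzero.

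For (A), Theorem \ref{dim5-center2} already guarantees that every strict CKY tensor on each such $(\g,\pint)$ is a scalar multiple of the distinguished $T$ recorded in the table. Hence what remains is to rule out nonzero Killing $2$-forms on the three metric Lie algebras of the theorem. My approach is direct: pass to an orthonormal basis via Gram--Schmidt on the basis in Table \ref{tabla_dim5_xi_no_central}, compute all covariant derivatives $\nabla_a b$ from the Koszul formula \eqref{koszul} using the rescaled brackets, and impose the Killing--Yano condition, namely that $\la(\nabla_x T)y,z\ra$ be totally skew-symmetric in the three arguments. Parametrizing $T$ by its $10$ independent entries as a skew-symmetric $5\times 5$ matrix, this becomes a homogeneous linear system whose unique solution, by direct verification, is $T=0$.

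The main obstacle is the bookkeeping, especially for $\R^2\times\mathfrak{su}(2)$ and $\R^2\times\mathfrak{sl}(2,\R)$, where the metric is not diagonal in the natural basis $\{E,Z_1,Z_2,X,Y\}$ and couples the central directions with $X$ and $Y$. A useful simplification exploits that for any central $Z\in\z$ one has $\nabla_a Z=0$ by \eqref{koszul}, so the covariant derivative inherits a block structure adapted to $\g=\z\oplus\z^\perp$. The Killing--Yano equations with one argument in $\z$ and two in $\z^\perp$ (or vice versa) form an independent subsystem that constrains the off-diagonal block of $T$ first; once this block is shown to vanish, the remaining equations on triples inside $\z^\perp$ involve only the brackets of the $3$-dimensional complement $\z^\perp$ and force the remaining entries of $T$ to vanish, completing the verification in each of the three cases.
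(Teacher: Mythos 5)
Your logical skeleton coincides with the paper's: Theorem \ref{dim5-center2} gives the strict CKY tensor uniquely up to scaling, a direct Chevalley--Eilenberg computation shows $d\omega\neq0$ (hence $\mathcal{*K}^2=0$), and the whole theorem reduces to proving $\mathcal{K}^2(\g,\pint)=0$. For that last step -- the only nontrivial one -- you diverge from the paper. We do not solve the $10$-variable linear system; instead we use that a KY tensor $T$ preserves $\z$ and $\z^\perp$ and therefore induces a KY tensor on the $3$-dimensional quotient $\g/\z$, which is isomorphic to $\h_3$, $\mathfrak{su}(2)$ or $\mathfrak{sl}(2,\R)$; by the classification in Table \ref{tabla_dim3} none of these admits a nonzero KY $2$-form, and from there $T=0$. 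Your brute-force route is admissible in principle, but as written it is not a proof: the computation is only announced, and the structural claim meant to organize it is false.

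Concretely, it is \emph{not} true that $\nabla_a Z=0$ for central $Z$. Koszul's formula \eqref{koszul} gives $2\la\nabla_a Z,c\ra=\la[a,Z],c\ra-\la[Z,c],a\ra+\la[c,a],Z\ra=\la[c,a],Z\ra$ for $Z\in\z$, and this vanishes for all $a,c$ only when $\z\perp\g'$, which fails in all three cases of Table \ref{tabla_dim5_xi_no_central}: in $L_{5,9}$ one has $\z\subset\g'$, so for instance $2\la\nabla_E Z_1,X\ra=\la[X,E],Z_1\ra=|Z_1|^2\neq0$; in $\R^2\times\mathfrak{su}(2)$ and $\R^2\times\mathfrak{sl}(2,\R)$ the off-diagonal metric entries couple $Z_1,Z_2$ with $X,Y$, so e.g. $\la[Y,E],Z_1\ra=\la X,Z_1\ra\neq0$. (Compare \eqref{LC-j}: already in the $2$-step nilpotent case $\nabla_xz=-\frac12 j(z)x\neq0$.) Hence $\nabla_a$ does not preserve the splitting $\g=\z\oplus\z^\perp$, the KY equations with mixed arguments do not form an independent subsystem constraining only the off-diagonal block of $T$, and your proposed reduction collapses. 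You must either carry out the full computation honestly, without this shortcut, or replace the step by the quotient argument: the induced tensor on $\g/\z\cong\h_3,\mathfrak{su}(2),\mathfrak{sl}(2,\R)$ must vanish by Table \ref{tabla_dim3}, and the remaining entries of $T$ are then killed by the mixed KY equations.
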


\begin{proof}[Sketch of the proof]
	Let $(\g,\pint)$ be any metric Lie algebra from Theorem \ref{dim5-center2}.
	We proved in that theorem  that there is only one, up to scaling, strict CKY $2$-form for $(\g,\pint)$. Moreover, it is not hard to see that this strict CKY $2$-form  is never closed, thus it is not $*$-Killing. To complete the proof we need to see that $(\g,\pint)$ does not admit any KY tensor.

	Assume $T$ is a KY tensor on $(\g,\pint)$ thus $T$ preserves $\z$ and $\z^\perp$ and therefore $T$ induces a KY tensor on $\mathfrak h=\g_{r,s}/\z$ which is isomorphic to $\h_3,\mathfrak{su}(2)$ or $\mathfrak{sl}(2,\R)$. 
	This contradicts the fact that any $3$-dimensional metric Lie algebra admitting a KY $2$-form is isomorphic to $\mathbb{R}\times \aff(\R)$ or $\mathfrak{e}(2)$ according to Table \ref{tabla_dim3}. 
\end{proof}

\section{Nilpotent Lie groups}

If we restrict ourselves to nilpotent Lie groups, we have the following important result about the geometry of nilpotent Lie groups.
It states that the de Rham decomposition of $(N,g)$ corresponds to the decomposition of $(\n,\pint)$ into irreducible orthogonal ideals.
It was shown first in \cite[Corollary A.4]{dBM20sym} for $2$-step nilpotent Lie groups, and it was generalized later in \cite{dBM20} to nilpotent Lie groups of arbitrary nilpotency degree.
\begin{teo}\cite[Theorem 2.4]{dBM20}\label{deRham}
	Let $(N, g)$ be a connected and simply connected nilpotent Lie group endowed with a left-invariant Riemannian metric, and consider its de Rham decomposition
	\begin{equation*}
		(N,g)=(N_0,g_0)\times(N_1,g_1)\times\cdots\times(N_r,g_r),
	\end{equation*}
	where $(N_0, g_0)$ is the Euclidean space and $(N_i, g_i)$ are irreducible Riemannian manifolds. Then each $(N_i, g_i)$, with $i =1, \dots, r$, is (isometric to) a connected, simply connected irreducible nilpotent Lie group endowed with a left-invariant metric. In particular, the Lie algebra $\n$ of $N$ is a direct sum of orthogonal ideals
	\begin{equation}
		(\n,\pint)=(\mathfrak a,\pint_0)\oplus \bigoplus_{i=1}^m(\n_i,\pint_i),
	\end{equation}
	where $\mathfrak a=T_eN_0$ is abelian and $\n_i=T_eN_i$ is nilpotent, non-abelian and irreducible for $i =1, \dots, r$.
\end{teo}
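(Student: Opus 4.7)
The plan is to start from the purely Riemannian de Rham decomposition and then upgrade it to a Lie-theoretic decomposition using the left-invariance of the metric. Let $(N,g)$ be the simply connected nilpotent Lie group with left-invariant metric, and let $(N,g)=\prod_{i=0}^{r}(N_i,g_i)$ be its de Rham decomposition, with $N_0$ Euclidean and $N_i$ irreducible for $i\geq 1$. At the identity $e\in N$ this gives an orthogonal decomposition $T_eN=V_0\oplus V_1\oplus\cdots\oplus V_r$ into holonomy-invariant subspaces, and correspondingly a decomposition of $TN$ into $r+1$ parallel complementary distributions $\mathcal D_0,\ldots,\mathcal D_r$.

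The first step is to show that each $\mathcal D_i$ is left-invariant, so that it is determined by a subspace $\n_i\subseteq \n$ under the usual identification $\n\cong T_eN$. For this I would appeal to (or sketch) the fact that left translations $L_a$ act on $N$ as isometries preserving $e$ at most up to the $N$-orbit, and that the parallel distributions are preserved by any isometry (permuted among each other when irreducible factors have matching type). Since $N$ is connected and acts transitively by left translations, and since a connected family of isometries cannot permute the finitely many de Rham factors nontrivially, each $L_a$ preserves each $\mathcal D_i$; equivalently, $\mathcal D_i$ is a left-invariant distribution determined by $V_i=\n_i\subseteq\n$. This yields the orthogonal vector-space splitting $\n=\n_0\oplus\n_1\oplus\cdots\oplus\n_r$ with $\n_0=\mathfrak a$.

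Next I would prove that each $\n_i$ is an \emph{ideal} of $\n$. Being parallel, $\mathcal D_i$ satisfies $\nabla_X Y\in\n_i$ whenever $Y\in\n_i$ and $X\in\n$ is arbitrary. Using the Koszul formula \eqref{koszul}, for $Y\in\n_i$, $X\in\n$, and $Z\in\n_j$ with $j\neq i$,
\[
2\langle\nabla_X Y,Z\rangle=\langle[X,Y],Z\rangle-\langle[Y,Z],X\rangle+\langle[Z,X],Y\rangle=0,
\]
and by exploiting this for $X$ in $\n_i$ and in $\n_j$ separately, together with the orthogonality of the $\n_k$, one deduces that $\langle[\n,\n_i],\n_j\rangle=0$ for all $j\neq i$, i.e.\ $[\n,\n_i]\subseteq\n_i$. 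Thus each $\n_i$ is an ideal, and in particular a Lie subalgebra whose associated connected Lie subgroup $N_i'\subseteq N$ is normal, simply connected (as $N$ is), and inherits a left-invariant metric equal to the restriction of $g$. Identifying $N_i'$ with the de Rham factor $(N_i,g_i)$ through the totally geodesic submanifold tangent to $\mathcal D_i$ at $e$ finishes the structural description. Nilpotency of each $\n_i$ is immediate because it is an ideal of the nilpotent Lie algebra $\n$, and the fact that $\mathfrak a$ is abelian follows from $(N_0,g_0)$ being Euclidean together with the standard characterization of flat left-invariant metrics.

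The main obstacle I expect is the first step: \emph{upgrading} the Riemannian de Rham decomposition to a left-invariant one, i.e.\ arguing that the parallel distributions come from subspaces of $\n$. This is where one needs either a direct argument using that $N$ acts by isometries preserving each factor (not merely permuting them), or an appeal to the Gordon--Wilson-type structure theorem $I(N)^0=N\rtimes(\mathrm{Aut}(\n)\cap O(\n,\pint))$, which ensures that the isotropy at $e$ acts by orthogonal automorphisms of $\n$ and that the holonomy-invariant splitting of $T_eN$ is automatically $\mathrm{Aut}$-compatible. Once this step is in hand, the translation from parallelism to the ideal condition via \eqref{koszul} is a short computation.
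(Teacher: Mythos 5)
Your overall strategy is the right one, and it is essentially the route taken in the cited source \cite{dBM20} (the survey itself states this theorem without proof, so the comparison is with the argument there): first show the de Rham parallel distributions are left-invariant using the connectedness of $N$ inside the isometry group, then convert parallelism into the ideal condition. The step you flag as the main obstacle is in fact fine as you sketch it: isometries of a complete simply connected manifold permute the finitely many canonical distributions of the de Rham splitting, this permutation is locally constant on $\mathrm{Isom}(N,g)$, and $\{L_a\}_{a\in N}$ is connected and contains the identity, so each distribution is left-invariant and comes from a subspace $\n_i\subseteq\n$.

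The genuine gap is in the step you call ``a short computation''. The Koszul identities $2\la\nabla_XY,Z\ra=0$, used for all admissible placements of $X,Y,Z$ among the summands, do \emph{not} yield $[\n,\n_i]\subseteq\n_i$. What they give is: (a) $[\n_i,\n_i]\subseteq\n_i$; (b) $[\n_i,\n_j]\subseteq\n_i\oplus\n_j$ for $i\neq j$; and (c) for $y\in\n_i$ and $j\neq i$ the compression $A_j:=\mathrm{pr}_{\n_j}\circ\ad_y|_{\n_j}$ is skew-symmetric. Indeed, for $x,z\in\n_j$ and $y\in\n_i$, writing $a=\la[x,y],z\ra$, $b=\la[y,z],x\ra$, $c=\la[z,x],y\ra$, the only independent relations produced by parallelism are $a-b+c=0$ and $c-a+b=0$; these force $c=0$ and $a=b$ (i.e.\ skew-symmetry of $A_j$), but not $a=0$. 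To kill the $\n_j$-component of $[\n_i,\n_j]$ you must invoke nilpotency: by (a) and (b), $\ad_y$ is block triangular with the $A_j$ as diagonal blocks modulo $\n_i$, so each $A_j$ is nilpotent; a nilpotent skew-symmetric endomorphism vanishes; hence $[\n_i,\n_j]\subseteq\n_i$ and, by symmetry in $i$ and $j$, $[\n_i,\n_j]=0$. This is exactly where the hypothesis ``nilpotent'' does its work, whereas your proof uses nilpotency only in the harmless closing remarks --- which cannot be right, since the statement fails for non-nilpotent groups (e.g.\ the universal cover of $E(2)$ with its flat left-invariant metric has trivial de Rham decomposition, yet its Lie algebra is non-abelian). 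With this correction the argument closes and agrees with the proof in \cite{dBM20}.
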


We also recall two results regarding left-invariant KY $p$-forms on Lie groups.
The first proposition is a Lie group analogue of a result proved in \cite{Moroianu-semmelmann-08} about the decomposition of Killing forms on a product of compact Riemannian manifolds. Let $(N_1, g_1)$, $(N_2, g_2)$ be Lie groups endowed with a left-invariant metric and consider $N=N_1\times N_2$ endowed with the product metric. Then $\n =\n_1\oplus \n_2$ is an orthogonal direct sum of ideals, where $\n_i=T_eN$. Let $\alpha=\sum_l \alpha_l$ with $\alpha_l\in\Lambda^l\n_1^*\otimes\Lambda^{k-l}\n_2^*$ for $l=0,\dots, k$, then we have:

\begin{prop}\cite[Proposition 3.2]{dBM20}
	A left-invariant k-form $\alpha$ is a Killing form on $(N, g)$ if and only if $\alpha_0$ and $\alpha_k$ are Killing forms on $N_2$ and $N_1$ respectively, and $\alpha_l$ is a parallel form on $N$ for each $l=1,\dots, k-1$.
\end{prop}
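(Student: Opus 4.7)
The plan is to work at the Lie algebra level and exploit the product structure of the metric and Lie bracket. Since $[\n_1,\n_2]=0$ and the metric is a product, the Koszul formula implies that the Levi--Civita connection on $\n$ satisfies $\nabla_{x_i}y_j=0$ for $i\neq j$, while $\nabla_{x_i}y_i\in\n_i$ coincides with $\nabla^{N_i}_{x_i}y_i$. Consequently, the bigrading $\Lambda^k\n^*=\bigoplus_{l=0}^k \Lambda^l\n_1^*\otimes\Lambda^{k-l}\n_2^*$ is preserved by $\nabla_x$, and for $x\in\n_i$ the operator $\nabla_x$ acts solely on the $\n_i^*$-factor. Similarly, the Chevalley--Eilenberg differential splits as $d=d_1+d_2$, with $d_i$ raising only the $i$-th bidegree.

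For the forward direction, I would use the equivalent characterization that $\alpha$ is a Killing $k$-form iff $\nabla\alpha=\tfrac{1}{k+1}\,d\alpha$ (equivalently, $\nabla\alpha$ is totally antisymmetric). Write $\alpha=\sum_l\alpha_l$ according to the bigrading. The first step is to apply the Yano equation with both the derivation vector and the swapped argument in $\n_1$ (and the remaining arguments split arbitrarily): this shows that every slice of $\alpha_m$ obtained by contracting with a fixed element of $\Lambda^{k-m}\n_2$ is a Killing $m$-form on $N_1$, and symmetrically in the $\n_2$-direction. Denoting by $\nabla^{(i)}\alpha_m$ the part of $\nabla\alpha_m$ with derivation vector in $\n_i$, these translate into the slice-level identities
\[
\nabla^{(1)}\alpha_m=\tfrac{1}{m+1}\,d_1\alpha_m,\qquad \nabla^{(2)}\alpha_m=\tfrac{1}{k-m+1}\,d_2\alpha_m.
\]

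The second step is to project the full Killing equation $\nabla\alpha=\tfrac{1}{k+1}\,d\alpha$ onto bidegree $(a,b)$ with $a+b=k+1$, which yields $\nabla^{(1)}\alpha_{a-1}+\nabla^{(2)}\alpha_a=\tfrac{1}{k+1}(d_1\alpha_{a-1}+d_2\alpha_a)$. Combined with the proportionality $b\,d_1\alpha_{a-1}=a\,d_2\alpha_a$, itself a bigraded consequence of the Killing equation obtained by substituting the slice-level identity for $\nabla_{x_1}\alpha_{a-1}$ into the $x_1\iprod$-projection at bidegree $(a-1,b)$, a short algebraic manipulation will force $d_1\alpha_l=0$ for $0\le l\le k-1$ and $d_2\alpha_l=0$ for $1\le l\le k$. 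In the middle range $0<l<k$ both vanish, so every slice of $\alpha_l$ is simultaneously Killing and closed on its factor, hence parallel; therefore $\alpha_l$ is parallel on $N$. The extreme cases $l=0$ and $l=k$ reduce directly to the Killing conditions for $\alpha_0$ on $N_2$ and $\alpha_k$ on $N_1$.

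The converse is a routine verification: parallel middle summands contribute zero to both $\nabla\alpha$ and $d^*\alpha$, while the extremal Killing pieces extend trivially to $N$ (since $\nabla_{x_1}\alpha_0=0$ and $x_1\iprod d\alpha_0=0$ for $x_1\in\n_1$, and symmetrically for $\alpha_k$), so the sum $\alpha=\sum_l\alpha_l$ satisfies the Killing equation on $N$. The main obstacle will be the bigraded bookkeeping in the forward direction: establishing the slice-Killing property rigorously from the Yano equation, and then combining the normalization mismatch $\tfrac{1}{m+1}\neq\tfrac{1}{k+1}$ with the proportionality to force $d_1\alpha_l=d_2\alpha_l=0$ across the middle range.
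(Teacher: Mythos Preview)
The survey itself contains no proof of this proposition; it merely quotes the statement from \cite{dBM20}. So there is no in-paper argument to compare against, and I evaluate your outline on its own.

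Your setup is fine: on a metric product of Lie groups the Levi--Civita connection and the differential split compatibly with the bigrading, and the Yano equation with both swapped slots in the same factor gives the slice-Killing identities $\nabla^{(1)}\alpha_m=\tfrac{1}{m+1}d_1\alpha_m$ and $\nabla^{(2)}\alpha_m=\tfrac{1}{k-m+1}d_2\alpha_m$. The mixed Yano equation then yields the proportionality $(k-l)\,d_1\alpha_l=(l+1)\,d_2\alpha_{l+1}$. The gap is in your second step. The system ``slice-Killing for every $\alpha_l$ in each factor, plus the proportionality relations'' is \emph{exactly equivalent} to the full Killing condition on $\alpha$: substituting the slice identities into the bigraded projection of $\nabla\alpha=\tfrac{1}{k+1}d\alpha$ returns a tautology, not a new constraint. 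Hence no ``short algebraic manipulation'' of these relations alone can force $d_1\alpha_l=0$ and $d_2\alpha_l=0$ in the middle range. Concretely, for $k=3$ your chain gives only $d_2\alpha_1=0$, $d_1\alpha_2=0$, and $d_1\alpha_1=d_2\alpha_2$; the vanishing $d_1\alpha_1=0$ never follows from the ingredients you list.

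What is missing is an adjointness/norm argument using the codifferentials. Since each slice of $\alpha_l$ is Killing, one has $d_1^*\alpha_l=d_2^*\alpha_l=0$; moreover $d_i^*$ commutes with $d_j$ for $i\ne j$ on the product. Applying $d_2^*$ to $d_1\alpha_1=d_2\alpha_2$ then gives $d_2^*d_2\alpha_2=d_1d_2^*\alpha_1=0$, whence $|d_2\alpha_2|^2=\langle d_2^*d_2\alpha_2,\alpha_2\rangle=0$, so $d_2\alpha_2=0$ and thus $d_1\alpha_1=0$; the general case is an induction along the chain. This positivity step uses that $d_i^*$ is the metric adjoint of $d_i$ on left-invariant forms, which holds when $\n_i$ is unimodular (in particular in the nilpotent setting of \cite{dBM20}). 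Your sketch never invokes $d^*$, and without this extra input the forward implication does not close.
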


The last result was proved first  in \cite{dBM19} for left-invariant Killing 2-and 3-forms on $2$-step nilpotent Lie groups and extended later in \cite{dBM20} to forms of arbitrary degree and to arbitrary Lie groups.

The second result is about parallel forms and shows that:
\begin{prop}\cite[Proposition 3.3]{dBM20}
	Let $(N, g)$ be a de Rham irreducible connected and simply connected (non-abelian) nilpotent Lie group endowed with a left-invariant metric. Then the only parallel differential forms on $(N, g)$ are the constants and the constant multiples of the volume form. In particular, (N, g) does not admit K\"ahler structures.
\end{prop}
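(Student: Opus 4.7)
The plan is to translate the problem into one about the holonomy representation of $(N,g)$, then use the de Rham decomposition to reduce to non-degenerate parallel forms, and finally invoke curvature and holonomy obstructions specific to nilpotent Lie groups. Since $(N,g)$ is connected and simply connected, a parallel $p$-form $\omega$ is determined by its value $\omega_e \in \Lambda^p \n^*$, and this value must be fixed by the restricted holonomy group $\mathrm{Hol}_e \subset SO(\n)$. The hypothesis combined with the de Rham theorem says that $\mathrm{Hol}_e$ acts irreducibly on $\n$. The extreme cases $p=0$ (constants) and $p=n$ (multiples of the volume form) are clear, so the real task is to rule out nontrivial $\mathrm{Hol}_e$-invariants in $\Lambda^p\n^*$ for $1\le p\le n-1$.

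I would first exploit the parallel distribution
\begin{equation*}
\mathcal K_x := \{X\in T_xN : X\iprod\omega_x = 0\},
\end{equation*}
which has constant rank and is parallel whenever $\omega$ is parallel, since parallel transport commutes with contraction. By de Rham, $\mathcal K$ must be either $0$ or $TN$; the second alternative forces $\omega\equiv 0$, so we may assume $\mathcal K=0$, i.e.\ $\omega$ is non-degenerate. For $p=1$ this already produces a contradiction: a nowhere-vanishing parallel $1$-form has the hyperplane field $\{X:\omega(X)=0\}$ as a parallel distribution, with a parallel $1$-dimensional orthogonal complement, so $(N,g)$ would split off a Euclidean line, forbidden by irreducibility and non-abelianness. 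By Hodge duality the case $p=n-1$ reduces to $p=1$.

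For the remaining middle-degree cases $2\le p\le n-2$, non-degeneracy forces a reduction of $\mathrm{Hol}_e$ to a proper subgroup of $SO(\n)$ on Berger's list. In the case $p=2$, the skew-symmetric endomorphism $J$ defined by $\la JX,Y\ra = \omega(X,Y)$ is parallel; its square $J^2$ is a parallel symmetric endomorphism commuting with the irreducible $\mathrm{Hol}_e$-representation, so $J^2=-c^2 I$ for some $c>0$ by Schur, and rescaling $J$ produces a parallel orthogonal almost complex structure, hence a K\"ahler structure on $(N,g)$. However a non-abelian nilpotent Lie group with a left-invariant metric does not admit any K\"ahler structure, by Hano's theorem on unimodular Lie groups together with the nilmanifold obstructions of Benson--Gordon and Hasegawa. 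For $p\ge 3$, the analogous reductions of holonomy single out the special holonomy groups $SU(m)$, $Sp(k)$, $Sp(k)\cdot Sp(1)$, $G_2$ and $\mathrm{Spin}(7)$; each of these forces the metric to be Einstein (Ricci-flat in all but the quaternion-K\"ahler case), which by Milnor's theorem and the work of Lauret on Einstein nilsolvmanifolds cannot occur on a non-abelian nilpotent Lie group.

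The main obstacle that I expect is the handling of the higher-degree cases $p\ge 3$: the K\"ahler obstruction at $p=2$ is classical and relatively clean, but ruling out every special holonomy individually requires rather specific curvature and Ricci obstructions for nilpotent Lie groups, and producing a clean uniform argument at this level of generality is what makes the overall proof substantive. The ``in particular'' statement on K\"ahler structures is then an immediate consequence, since a K\"ahler structure on $(N,g)$ would provide a parallel $2$-form (the K\"ahler form) of the type excluded above.
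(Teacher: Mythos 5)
Your overall strategy --- identify parallel forms with holonomy-invariant elements of $\Lambda^p\mathfrak{n}^*$, use de Rham irreducibility to force $\mathrm{Hol}\subsetneq SO(n)$ whenever a non-trivial parallel form of intermediate degree exists, and then exclude each possibility via curvature obstructions specific to nilpotent Lie groups --- is essentially the strategy of del Barco and Moroianu's original proof (the survey only cites the result), and the key external inputs you name (Milnor's theorem that a non-abelian nilpotent Lie group with a left-invariant metric is never Einstein, and the non-existence of K\"ahler structures) are the right ones. There are, however, concrete gaps. First, Berger's theorem has a locally symmetric alternative which you never address; it is disposed of by the same obstruction, since an irreducible simply connected symmetric space is Einstein, but the case must be mentioned. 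Second, your degree-by-degree case analysis contains a false step: it is not true that a parallel $p$-form with $p\ge 3$ forces one of the Einstein holonomies $SU(m)$, $Sp(k)$, $Sp(k)Sp(1)$, $G_2$, $\mathrm{Spin}(7)$. For example, the square $\Omega\wedge\Omega$ of a K\"ahler form is a non-zero parallel $4$-form present whenever $\mathrm{Hol}\subseteq U(m)$, with no further holonomy reduction, so your $p\ge 3$ argument misses the K\"ahler case entirely for even $p\ge 4$. The correct logic is degree-independent: any non-zero parallel form of degree $1\le p\le n-1$ already gives $\mathrm{Hol}\subsetneq SO(n)$, because $\Lambda^p(\mathbb{R}^n)$ has no non-zero $SO(n)$-invariants in that range; one then runs through Berger's list (plus the symmetric case) exactly once, excluding $U(m)$ by the K\"ahler obstruction and everything else by the Einstein obstruction.

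Third, and most seriously, your exclusion of the K\"ahler case rests on citations that do not apply as stated. Hano's theorem concerns left-invariant K\"ahler structures, in which both $g$ and $J$ are invariant, whereas the parallel complex structure $J$ you produce from a general parallel $2$-form is a priori not left-invariant; Benson--Gordon and Hasegawa concern compact nilmanifolds, while $N$ here is non-compact and need not admit a lattice. The gap can be closed: in the irreducible case the space of parallel $2$-forms is the space of skew-symmetric elements of the commutant of the holonomy representation, hence is at most $1$-dimensional unless $\mathrm{Hol}\subseteq Sp(k)$, in which case the metric is Ricci-flat and Milnor's theorem already gives a contradiction. In the $1$-dimensional case, left translations preserve the Levi-Civita connection and the (constant) pointwise norm, so they act on the line $\mathbb{R}J$ by $\pm1$, hence trivially by connectedness; thus $J$ is left-invariant after all, and Hano's theorem together with Milnor's characterization of flat left-invariant metrics forces $N$ to be abelian. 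Without some argument of this kind, the $U(m)$ case --- the only entry of Berger's list not killed by the Einstein obstruction --- is not actually ruled out, and the ``in particular'' clause about K\"ahler structures is precisely the point left unproved.
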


As a consequence, it is shown the following decomposition result for left-invariant Killing forms on nilpotent Lie groups.
\begin{cor}\cite[Corollary 3.5]{dBM20}
	Every left-invariant Killing form on a connected and simply connected nilpotent Lie group is the sum of Killing forms on its de Rham factors, and a left-invariant parallel form. The latter is a linear combination of wedge products of volume forms of some of the irreducible de Rham factors and of any left-invariant form on the flat factor.
\end{cor}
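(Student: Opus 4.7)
The plan is to combine Theorem~\ref{deRham} with an iterated application of the previous proposition on Killing forms on a product (Proposition~3.2 of \cite{dBM20}) and then use the characterization of parallel forms on irreducible nilpotent groups (Proposition~3.3 of \cite{dBM20}) to classify the parallel piece.

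First, by Theorem~\ref{deRham} I decompose $(\n,\pint)=(\mathfrak a,\pint_0)\oplus\bigoplus_{i=1}^r(\n_i,\pint_i)$ with $\mathfrak a=T_eN_0$ abelian and every $\n_i$ ($i\geq 1$) irreducible and non-abelian. Any left-invariant $k$-form $\alpha\in\Lambda^k\n^*$ splits accordingly as a sum of components $\alpha_{(l_0,\ldots,l_r)}\in\Lambda^{l_0}\mathfrak a^*\otimes\Lambda^{l_1}\n_1^*\otimes\cdots\otimes\Lambda^{l_r}\n_r^*$ with $l_0+\cdots+l_r=k$. Then I peel off one de~Rham factor at a time using Proposition~3.2 of \cite{dBM20}: writing $N=N_r\times N'$ with $N'=N_0\times\cdots\times N_{r-1}$, that proposition forces the purely $N_r$ component of a left-invariant Killing $k$-form to be Killing on $N_r$, the purely $N'$ component to be Killing on $N'$, and every mixed component to be parallel on $N$. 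Iterating on $N'$ and then on its remaining factors, I conclude that $\alpha$ decomposes as a sum of Killing forms on the factors $N_0,N_1,\ldots,N_r$ (the summands supported on a single $\n_i^*$) plus a single left-invariant parallel form on $N$ collecting all the mixed contributions from the successive splittings.

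Next, I identify the parallel piece using that the Levi-Civita connection of the Riemannian product is the direct sum of the factor connections, so every left-invariant parallel form on $N$ is a linear combination of wedges $\omega_0\wedge\omega_1\wedge\cdots\wedge\omega_r$ in which each $\omega_i$ is a left-invariant parallel form on $N_i$. By Proposition~3.3 of \cite{dBM20}, for $i\geq 1$ the only such $\omega_i$ are the constants and the scalar multiples of the volume form $\operatorname{vol}_{N_i}$, whereas on the flat factor $N_0$ every left-invariant form is automatically parallel. Hence each summand is a linear combination of wedge products $\omega_0\wedge\operatorname{vol}_{N_{i_1}}\wedge\cdots\wedge\operatorname{vol}_{N_{i_s}}$ with $\omega_0$ an arbitrary left-invariant form on $N_0$ and $\{i_1,\ldots,i_s\}\subseteq\{1,\ldots,r\}$, which is exactly the claimed description.

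The main obstacle I anticipate is the bookkeeping in the iteration: at every step one must cleanly isolate the ``purely on a single factor'' piece (which retains the Killing property) from the mixed piece (which becomes parallel) and then aggregate the successive mixed contributions into one global parallel form without double counting across the iteration tree. Once this is organized, the final normal form is forced by Proposition~3.3 of \cite{dBM20} applied on each irreducible factor together with the straightforward fact that left-invariance equals parallelism on the flat factor.
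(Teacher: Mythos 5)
Your argument is correct and follows exactly the route the paper indicates: the survey presents this corollary as a direct consequence of Theorem~\ref{deRham} together with Propositions~3.2 and~3.3 of \cite{dBM20}, which is precisely the combination you carry out (iterated splitting of the Killing form along the de Rham factors, then identification of the parallel piece factor by factor). The details you supply — the multidegree bookkeeping and the fact that the space of left-invariant parallel forms on a product is the wedge product of the parallel forms on the factors — are the standard ones implicit in the cited reference, so there is nothing to add.
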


\medskip

\subsection{$2$-step nilpotent metric Lie algebras}
A $2$-step nilpotent Lie algebra $\n$ is a non-abelian Lie algebra such that $\ad_x^2=0$ for all $x\in\n$. Equivalently, $\n$ is $2$-step nilpotent if its commutator $\n'=[\n,\n]$ is non-trivial and is contained in the center $\z$ of $\n$ ($[\n,[\n,\n]]=0$).
Let consider an inner product $\pint$ on $\n$, then $\n$ can be written as $\n=\mathfrak{z}\oplus \mathfrak{v}$, where $\mathfrak{v}$ is the orthogonal complement of the center.

Let's consider an inner product $\pint$ in $\n$, then each $z\in\z$ defines a skew-symmetric endomorphism $j(z) : \v \to \v$ given by
\begin{equation}\label{jz}
	\langle j(z)x,y\rangle=\langle z, [x,y]\rangle,
\end{equation}
for all $x,y\in\v$. Since $\n'\subset\z$, the Lie algebra structure of $\n$ is completely determined by the map $j : \z \to \mathfrak{so}(\v)$.

Let $N$ be a simply connected Lie group corresponding to a $2$-step nilpotent Lie algebra $\n$, and $g$ the left-invariant metric of $N$ induced by $\pint$. The Levi-Civita connection on $(N,g)$ defines a linear map
$\nabla : \n \to \mathfrak{so}(\n)$ which by Koszul's formula \eqref{koszul} satisfies
$$\nabla_xy=\frac12([x,y]-\ad_x^*y -\ad_y^* x),$$
for all $x,y\in\n$, where $\ad_x^*$ denotes the adjoint of $\ad_x$ with respect to $g$. Now, using the endomorphism $j$ in \eqref{jz}, we have that the covariant derivative satisfies 
\begin{equation}\label{LC-j}
\Biggl\{
\begin{array}{l l}
	\nabla_xy=\frac12[x,y], & \quad\text{for} \quad x, y \in\v \\
	\nabla_xz=\nabla_zx=-\frac12 j(z)x, &\quad\text{for} \quad x \in\v, \quad z\in\z\\
	\nabla_zz'=0, &\quad\text{for} \quad z,z' \in\z .
\end{array} 
\end{equation}
In \cite{dBM19} the authors proved that $\v = \sum_{z\in\z} \Im j(z)$.
The linear map $j : \z \to \mathfrak{so}(\v)$  is injective if and only if the commutator of $\n$
coincides with its center, $\n'=\z$. On the other hand, if $\mathfrak a=\ker j$ and $\mathfrak a^\perp$ is its orthogonal complement in $\z$, then $\mathfrak a $ is an abelian ideal of $\n$ and $\n_0:=\v\oplus\mathfrak a ^\perp$ is a $2$-step nilpotent ideal of $\n$, such that $\n_0'=\n'=\mathfrak a^\perp$. 
Since the Levi-Civita connection is zero on elements from $\mathfrak a$, see \eqref{LC-j}, then $\mathfrak a $ corresponds to the flat Riemannian factor in the de Rham decomposition on $N$.

\

\subsubsection{Space of $p$-forms}
Note, that the orthogonal decomposition  of a $2$-step Lie algebra $\n=\mathfrak{z}\oplus \mathfrak{v}$ also induces a decomposition of the space of $p$-forms on $\n$
\begin{equation}
	\Lambda^p\n^*=\bigoplus_{k=0}^p \Lambda^k\v^*\otimes\Lambda^{p-k}\z^*.
\end{equation}
Moreover, we have a further direct sum decomposition $\Lambda^p\n^*=	\Lambda^p_{ev}\n^* \oplus	\Lambda^p_{odd}\n^*$ where
\begin{equation}\label{v-degree}
	\Lambda^p_{ev}\n^*=\bigoplus_{k\; \text{even}} \Lambda^k\v^*\otimes\Lambda^{p-k}\z^* \quad \text{and} \quad
	\Lambda^p_{odd}\n^*=\bigoplus_{k\; \text{odd}} \Lambda^k\v^*\otimes\Lambda^{p-k}\z^*.
\end{equation}
If a $p$-form $\alpha\in\Lambda^p_{ev}\n^*$, then $\alpha$ is of even $\v$-degree, and if $\alpha\in\Lambda^p_{odd}\n^*$, then $\alpha$ is of odd $\v$-degree. 
Correspondingly, we have the space of CKY (KY) $p$-forms of even or odd $\v$-degree, that is, 
\begin{equation}
	\mathcal K^p_{ev}(\n,\pint)\subset\mathcal{CK}^p_{ev}(\n,\pint) \subset\Lambda^p_{ev}\n^* \quad \text{and} \quad
	\mathcal K^p_{odd}(\n,\pint)\subset\mathcal{CK}^p_{odd}(\n,\pint) \subset\Lambda^p_{odd}\n^*.
\end{equation}
It can be shown that the even and odd components of every CKY form are again CKY, therefore we have the space of CKY $p$-forms decomposed into its even and odd parts, that is,
\begin{equation}
	\mathcal{CK}^p(\n,\pint) = \mathcal{CK}^p_{ev}(\n,\pint)\oplus\mathcal{CK}^p_{odd}(\n,\pint).
\end{equation}
Similarly, we have a decomposition of the space of KY $p$-forms
\begin{equation}
	\mathcal{K}^p(\n,\pint) = \mathcal{K}^p_{ev}(\n,\pint)\oplus\mathcal{K}^p_{odd}(\n,\pint).
\end{equation}

\medskip

\section{Killing $2$-forms on $2$-step nilpotent Lie algebras}

The work of KY $2$-forms on $2$-step nilpotent Lie algebras began in  \cite{BDS}, a few years later it continued with the works of Andrada-Dotti in \cite{AD20} and del Barco-Moroianu in \cite{dBM20}.
We start by recalling the main result in \cite{BDS}:
\begin{teo}\label{teoKY2nilpotente}
	\cite[Theorem 3.1]{BDS} Let $T$ be a skew-symmetric endomorphism on a $2$-step nilpotent Lie algebra $\n$ endowed with an inner product $\langle \cdot,\cdot\rangle$. Then $T$ is a KY tensor if and only if $T$ preserves the center and for all $x,y \in \mathfrak{v}$, it holds:
	\begin{align}
	\label{Ky2pasos}
	[Tx,y]=[x,Ty] \\
	T[x,y]=3[Tx,y]
	\end{align}
	or equivalently
	\begin{align*}
		\ad_x\circ T=\ad _{Tx}=\frac{1}{2}[T,\ad_{x}]
	\end{align*}
\end{teo}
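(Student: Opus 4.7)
The plan is to expand the Killing--Yano equation $(\nabla_x T)y + (\nabla_y T)x = 0$ using the Levi-Civita formulas \eqref{LC-j} on $\n = \v \oplus \z$, and case-split according to whether each of $x, y$ lies in $\v$ or in $\z$. Write $T$ in block form as $T = A + C + a + c$, with $A\colon\v\to\v$ and $C\colon\z\to\z$ skew-symmetric, and off-diagonal parts $a\colon\v\to\z$, $c\colon\z\to\v$ related by $c = -a^*$ from skew-symmetry of $T$. The target is to show that the Killing--Yano condition is equivalent to $a=0$ together with the two identities $[Ax,y]=[x,Ay]$ and $C[x,y]=3[Ax,y]$ for all $x, y \in \v$.

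\medskip

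For $x, y \in \v$ we have $\nabla_x y + \nabla_y x = 0$, so the equation reduces to $\nabla_x(Ty) + \nabla_y(Tx) = 0$. Decomposing $Tx$ and $Ty$ into $\v$- and $\z$-parts and applying \eqref{LC-j} splits this into
\[
[Ax,y] = [x,Ay] \quad (\z\text{-part}), \qquad j(ax)\,y + j(ay)\,x = 0 \quad (\v\text{-part}).
\]
The first relation is exactly $[Tx,y]=[x,Ty]$ restricted to $\v$, since the summands $ax, ay \in \z$ are central and contribute nothing to the bracket.

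\medskip

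Next take $x \in \v$ and $y \in \z$. Using $\nabla_x y = \nabla_y x = -\tfrac12 j(y)x$, the $\v$-component of the Killing--Yano equation becomes
\[
2A\,j(y) - j(y)\,A = j(Cy) \qquad \text{on } \v,
\]
and the $\z$-component becomes $[x, cy] = -2\,a\,j(y)x$. Pairing the first display against $x_2 \in \v$ and using the skew-symmetries of $A$, $j(y)$, $C$ together with the already-derived relation $[Ax_1,x_2]=[x_1,Ax_2]$, one computes $\langle Cy,[x_1,x_2]\rangle = -3\langle y,[Ax_1,x_2]\rangle$, which rearranges into the second identity $C[x_1,x_2] = 3[Ax_1,x_2]$.

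\medskip

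The step I expect to be the main obstacle is ruling out the off-diagonal piece $a$. Pairing $[x, cy] = -2a\,j(y)x$ against $z \in \z$ and using $c = -a^*$ gives $\langle a\,j(z)x,\,y\rangle = 2\langle a\,j(y)x,\,z\rangle$; swapping $y\leftrightarrow z$ and substituting yields $\langle a\,j(z)x,\,y\rangle = 4\langle a\,j(z)x,\,y\rangle$, so $a\circ j(z) = 0$ for every $z \in \z$, and the generation property $\v = \sum_{z\in\z}\Im j(z)$ recalled above from \cite{dBM19} forces $a \equiv 0$. The converse is a direct check: with $a=c=0$ and the two identities in force, each block of the Killing--Yano equation vanishes by reversing the computations above. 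Finally, the equivalent reformulation is immediate: $[Tx,y]=[x,Ty]$ reads $\ad_{Tx} = \ad_x\circ T$, and substituting this into $T[x,y]=3[Tx,y]$ rewrites as $\ad_{Tx} = \tfrac12(T\ad_x - \ad_x T) = \tfrac12[T,\ad_x]$.
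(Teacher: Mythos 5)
The survey does not reproduce a proof of this theorem; it is quoted from [BDS, Theorem 3.1] without argument, so there is nothing in the paper to compare your write-up against line by line. Your proof is correct and self-contained. The block decomposition $T=A+C+a+c$ with $c=-a^*$, the case analysis via \eqref{LC-j}, and each of the displayed identities check out: the $(\v,\v)$ case gives $[Ax,y]=[x,Ay]$ and $j(ax)y+j(ay)x=0$; the $(\v,\z)$ case gives $2Aj(z)-j(z)A=j(Cz)$ on the $\v$-component and $[x,cz]=-2aj(z)x$ on the $\z$-component; your pairing argument correctly yields $\langle a j(z)x,y\rangle=4\langle a j(z)x,y\rangle$, hence $a\circ j(z)=0$, and the generation property $\v=\sum_{z\in\z}\Im j(z)$ (which the survey does recall) kills $a$, i.e.\ forces $T$ to preserve the center; and the computation $\langle Cz,[x_1,x_2]\rangle=-3\langle z,[Ax_1,x_2]\rangle$ rearranges via skew-symmetry of $C$ to $C[x_1,x_2]=3[Ax_1,x_2]$. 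Two small loose ends worth writing out explicitly: the $(\z,\z)$ block of the Killing--Yano equation, which reads $j(z)cz'+j(z')cz=0$ and is trivially satisfied once $c=0$ but should be listed for the ``if and only if''; and, in the operator reformulation $\ad_x\circ T=\ad_{Tx}=\tfrac12[T,\ad_x]$, the verification on central arguments $y\in\z$, where all three operators vanish because $T$ preserves $\z$. Neither affects the validity of the argument.
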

Notice that if  $T$ preserves the center, it will also preserves $\mathfrak{v}$. The conditions in Theorem \ref{teoKY2nilpotente} can be described in terms of the maps $j_z$ introduced in \eqref{jz}, that is, a skew-symmetric endomorphism $T$ on a $2$-step nilpotent metric Lie algebra $(\n,\pint)$ is Killing if and only if $\frac13j_{Tz}=T\circ j_z=-j_z\circ T$, for all $z\in\z.$ Now, using Theorem \ref{teoKY2nilpotente},  the next result follows.
\begin{cor}\cite[Corollary 3.3]{BDS}\label{KY en heisenberg} If $T$ is a KY tensor on the $(2n+1)$-dimensional Heisenberg Lie algebra, then
	$T$ is trivial.
\end{cor}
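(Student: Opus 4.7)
The plan is to apply Theorem \ref{teoKY2nilpotente} in its $j$-map formulation, exploiting the fact that the Heisenberg Lie algebra $\h_{2n+1}$ has a one-dimensional center and a nondegenerate bracket on $\v$.

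First I would set up the relevant data. Decompose $\h_{2n+1}=\z\oplus\v$ with $\dim\z=1$, and fix a generator $z_0$ of $\z$. Since the commutator of $\h_{2n+1}$ equals its center, the map $j\colon\z\to\mathfrak{so}(\v)$ is injective, so $j(z_0)\neq 0$. Moreover, the general identity $\v=\sum_{z\in\z}\Im j(z)$ recalled after \eqref{LC-j} specializes here to $\v=\Im j(z_0)$, which forces $j(z_0)\colon\v\to\v$ to be surjective and hence, by dimension, invertible.

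Next I would exploit that $T$ preserves $\z$. By Theorem \ref{teoKY2nilpotente}, any KY tensor satisfies $T\z\subset\z$, so $Tz_0=\lambda z_0$ for some scalar $\lambda$. Skew-symmetry of $T$ gives $\lambda\la z_0,z_0\ra=\la Tz_0,z_0\ra=0$, hence $\lambda=0$ and $T|_{\z}=0$. Consequently $T$ also preserves $\v=\z^\perp$, which lets me apply the reformulation stated right after Theorem \ref{teoKY2nilpotente}: the Killing condition becomes
\begin{equation*}
\tfrac{1}{3}j(Tz)=T\circ j(z)=-j(z)\circ T\qquad\text{for all }z\in\z.
\end{equation*}
Plugging in $z=z_0$ and using $Tz_0=0$ gives $T\circ j(z_0)=0$. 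Since $j(z_0)$ is invertible on $\v$, this forces $T|_{\v}=0$, and combined with $T|_{\z}=0$ we conclude $T=0$.

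There is no real obstacle here: the entire argument is a two-line specialization of the general KY characterization on $2$-step nilpotent metric Lie algebras once one observes (i) skew-symmetry collapses $T$ on the one-dimensional center, and (ii) $j(z_0)$ is invertible for the Heisenberg algebra. The only point worth double-checking is the invertibility of $j(z_0)$, which I would verify either from $\v=\Im j(z_0)$ or directly from the fact that the bracket induces a symplectic form $\omega$ on $\v$ via $[x,y]=\omega(x,y)z_0$, which is nondegenerate by the defining property of $\h_{2n+1}$.
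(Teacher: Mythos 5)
Your argument is correct and is essentially the proof the paper intends: the survey derives this corollary directly from Theorem \ref{teoKY2nilpotente}, and your steps (skew-symmetry kills $T$ on the one-dimensional center, then $T\circ j(z_0)=0$ together with the invertibility of $j(z_0)$ coming from the nondegeneracy of the Heisenberg bracket forces $T|_{\v}=0$) are exactly that derivation, phrased via the $j$-map reformulation rather than the equivalent condition $T[x,y]=3[Tx,y]$. No gaps.
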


For complex Heisenberg Lie groups we have the next example in complex dimension $3$ which appears in \cite{BDS}.
\begin{ejemplo}\label{complex-2step}
	Consider the 3-dimensional complex Heisenberg group $N$ given by upper triangular complex $3\times 3$ matrices with $1's$ in the diagonal. Taking the standard left-invariant  
	Riemannian metric $g$, then $\n$ admits an orthonormal basis $\{e_1,e_2,e_3,e_4,e_5,e_6\}$ such that 
	\begin{align*}
		[e_1,e_3]&=-[e_2,e_4]=e_5, & [e_1,e_4]=[e_2,e_3]=e_6,
	\end{align*}
	the standard complex structure is defined by $J e_{2i-1}=e_{2i}$ for $i=1,2,3$ and $J^2=-I$.
	In \cite[Corollary 3.4]{BDS} it is shown that the following non-degenerate tensor $T$ is a Killing-Yano tensor 
	$$T=\left( \begin{array}{cccccc}0&-3a&&&&\\3a&0&&&&\\
		&&0&-a&&\\&&a&0&&\\&&&&0&-a\\&&&&a&0\end{array}\right),\, a\in \R.$$	
	Moreover, it is the only left-invariant KY tensor on $(N,g,J)$, up to scaling.
\end{ejemplo}
\begin{rem}
The KY tensor above is not parallel. Indeed, any left-invariant invertible KY tensor on a $2$-nilpotent Lie
group is not parallel. This is a consequence of \cite[Theorem 5.1]{AD}, which states that there are no skew-symmetric parallel tensor on a Lie algebra $\g$ satisfying $\g'\cap\z\neq\{0\}$.
\end{rem}

In \cite{BDS}, the authors also considered a lattice $\Gamma$ in $N$ (discrete and co-compact subgroup) given by upper triangular complex $3\times 3$ matrices with integer coefficients and $1's$ in the diagonal. The compact manifold $\Gamma\setminus N$ is known as the Iwasawa manifold, endowed with the induced left-invariant metric admits a KY tensor.
Finally, they exhibited a natural extension of the complex Heisenberg group to higher dimensions and proved that the corresponding $2$-step nilpotent Lie algebras admit KY tensors.
	
\medskip

Motivated by this example of KY tensors on the complex $3$-dimesional Heisenberg Lie group, Andrada and Dotti 
studied KY tensors on $2$-step nilpotent Lie algebras in \cite{AD20}. First, as a consequence of Theorem \ref{teoKY2nilpotente} it is proved in \cite[Corollary 3.2]{AD20} that there are no nearly K\"ahler structures on $2$-step nilpotent Lie algebras. Then, they showed that it is enough to study invertible KY tensors on $2$-step nilpotent Lie groups, this is a consequence of the following result.

\begin{teo}\cite[Theorem 3.3]{AD20}
	Let $\left(\mathfrak{n},\langle \cdot,\cdot \rangle\right)$ be a $2$-step nilpotent metric Lie algebra. If $T$ is a KY tensor on $\mathfrak{n}$ then:
	\begin{enumerate}
		\item $\mathfrak{n}$ is isometrically isomorphic to a direct product of ideals $\mathfrak{n}=\mathfrak{n}_1\times \mathfrak{n}_2$, where $T|_{\mathfrak{n}_1}=0$, $\mathfrak{n}_2$ is $T$-invariant and $T|_{\mathfrak{n}_2}$ is a KY tensor on $\mathfrak{n}_2$.
		\item $T$ is parallel if and only if  $\mathfrak{n}_2$ is abelian. Moreover if $\mathfrak{z}=\mathfrak{n}'$ then $T=0$.
	\end{enumerate}
\end{teo}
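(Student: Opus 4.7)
The strategy is to split $\mathfrak{n}$ along the kernel and image of $T$. Since $T$ is skew-symmetric and, by Theorem \ref{teoKY2nilpotente}, preserves the centre $\z$, it also preserves $\v=\z^\perp$; on each of $\v$ and $\z$ one thus has an orthogonal decomposition into $\Ker T$ and $\Im T$. I would set
\[
\mathfrak{n}_1:=\Ker T,\qquad \mathfrak{n}_2:=\Im T,
\]
so that $\mathfrak{n}=\mathfrak{n}_1\oplus \mathfrak{n}_2$ is an orthogonal direct sum, $T|_{\mathfrak{n}_1}=0$, and $T|_{\mathfrak{n}_2}$ is invertible (for a skew operator, kernel and image intersect trivially).

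For part (1), the core task is to show that $\mathfrak{n}_1$ and $\mathfrak{n}_2$ are commuting ideals of $\mathfrak{n}$; since $\z$ is in the kernel of every $\ad$, only the $\v$-components are in play. The identities $T[x,y]=3[Tx,y]$ and $[Tx,y]=[x,Ty]$ from Theorem \ref{teoKY2nilpotente} do all the work. The first identity yields $T[x,y]=0$ whenever $Tx=0$, so $[\Ker T|_\v,\v]\subseteq \Ker T|_\z$; writing any element of $\Im T|_\v$ as $Tx'$ with $x'\in\Im T|_\v$, the same identity gives $[Tx',y]=\tfrac13 T[x',y]\in\Im T|_\z$, so $[\Im T|_\v,\v]\subseteq \Im T|_\z$. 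The second identity, applied with $x\in\Ker T|_\v$, forces $[\Ker T|_\v,\Im T|_\v]=0$, so $\mathfrak{n}_1$ and $\mathfrak{n}_2$ commute. The KY identities restrict verbatim to $\mathfrak{n}_2$, so Theorem \ref{teoKY2nilpotente} applied to the metric Lie ideal $\mathfrak{n}_2$ shows that $T|_{\mathfrak{n}_2}$ is a KY tensor there.

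For part (2), the orthogonal splitting of ideals realises $(\mathfrak{n},\pint)$ as a Riemannian product, so parallelism of $T$ is equivalent to parallelism of $T|_{\mathfrak{n}_2}$. If $\mathfrak{n}_2$ is abelian then \eqref{LC-j} shows the Levi-Civita connection on $\mathfrak{n}_2$ vanishes, making $T|_{\mathfrak{n}_2}$ trivially parallel. Conversely, assume $T$ parallel; for $x,y\in\Im T|_\v$ the formulas \eqref{LC-j} combined with the KY identities give
\[
(\nabla_x T)y=\tfrac12[x,Ty]-\tfrac12 T[x,y]=\tfrac12[x,Ty]-\tfrac32[x,Ty]=-[x,Ty],
\]
so parallelism forces $[x,Ty]=0$. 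Invertibility of $T|_{\Im T|_\v}$ then yields $[\Im T|_\v,\Im T|_\v]=0$, whence $\mathfrak{n}_2$ is abelian (its $\z$-part is central in $\mathfrak{n}$, so it contributes no brackets). The main point where care is required is this derivative computation, where the factor $\tfrac12$ coming from \eqref{LC-j} has to be reconciled with the factor $3$ coming from the KY identity; every other step is bookkeeping with the orthogonal decomposition forced by the skew-symmetry of $T$.

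For the final assertion, suppose $T$ is parallel and $\z=\mathfrak{n}'$. By the previous step $\mathfrak{n}_2$ is abelian, and the commutation $[\mathfrak{n}_1,\mathfrak{n}_2]=0$ shows $\mathfrak{n}_2\subseteq Z(\mathfrak{n})=\z$. Hence $\Im T|_\v\subseteq \v\cap\z=0$, so $T|_\v=0$, and thus $\mathfrak{n}'=[\v,\v]\subseteq\Ker T|_\z$; the hypothesis $\z=\mathfrak{n}'$ then gives $T|_\z=0$, so $T=0$.
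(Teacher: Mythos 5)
Your proof is correct and follows the same route the paper points to: the splitting $\mathfrak{n}_1=\Ker T$, $\mathfrak{n}_2=\Im T$ into orthogonal commuting ideals (cf.\ the remark following the theorem, that $\Ker T$ and $\Im T$ are ideals), with the identities of Theorem \ref{teoKY2nilpotente} forcing $[\Ker T|_{\v},\v]\subseteq\Ker T$, $[\Im T|_{\v},\v]\subseteq\Im T$ and $[\Ker T|_{\v},\Im T|_{\v}]=0$, and with \eqref{LC-j} giving the key computation $(\nabla_xT)y=-[x,Ty]$ so that parallelism is equivalent to $[\Im T|_{\v},\Im T|_{\v}]=0$. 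All steps, including the final reduction $T=0$ when $\z=\mathfrak{n}'$ and $T$ is parallel, check out.
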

\begin{rem}In a $2$-step nilpotent metric Lie algebra carrying a KY tensor, Ker $T$ and Im $T$ are ideals of $\mathfrak{n}$. 
\end{rem}

In  \cite[Proposition 3.5]{AD20} the authors showed that any complex $2$-step nilpotent Lie group $N$ endowed with a left-invariant Hermitian metric admits a non-parallel invertible KY tensor. 
Moreover, such a tensor is constructed using the bi-invariant complex structures on $N$. Indeed, if we decompose orthogonally $\mathfrak{z}=\mathfrak{a}\oplus \mathfrak{n}'$, then one can define a KY tensor by 
$T|_\v=J|_\v$, $T|_{\n'}=3J|_{\n'}$, $T|_{\mathfrak a}=T_0,$
where $T_0:\mathfrak a \to \mathfrak a$ is any skew-symmetric isomorphism of $\mathfrak a$.

Conversely, they proved that the only $2$-step nilpotent Lie groups with a left-invariant metric carrying a non-parallel invertible left-invariant KY $2$-form are the complex Lie groups.

\begin{teo}\label{decomposicion12nilpotente}
	\cite[Theorem 3.7]{AD20} If a $2$-step nilpotent Lie group N equipped with a left-invariant metric g admits an invertible left-invariant KY tensor then N is a complex Lie group, that is, there exists a bi-invariant complex structure. Moreover $g$ is Hermitian with respect to this complex structure. 
\end{teo}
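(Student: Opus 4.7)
My plan is to construct a bi-invariant orthogonal complex structure $J$ on $\n$ explicitly from $T$, by rescaling $T$ along a canonical spectral decomposition of $\n$ relative to $T^2$. The key observation is that the constant $3$ appearing in the KY identity $T[x,y]=3[Tx,y]$ is precisely what will force the rescaling factor needed to turn an appropriate multiple of $T$ into a Lie algebra endomorphism.

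First, I would reformulate the KY conditions of Theorem \ref{teoKY2nilpotente} via the maps $j_z$ defined in \eqref{jz}. A direct computation from $\la j_z x,y\ra=\la z,[x,y]\ra$ shows that $T\z\subset\z$, $[Tx,y]=[x,Ty]$ and $T[x,y]=3[Tx,y]$ on $\v$ are equivalent to
\[
T j_z=-j_z T, \qquad j_{Tz}=3\,T j_z \qquad \text{for all } z\in\z.
\]
Iterating these yields $T^2 j_z=j_z T^2$ and $j_{T^2 z}=9\,j_z T^2$. Since $T$ is skew-symmetric and invertible, $T^2$ is symmetric and negative definite, and the subspaces $\v,\z,\mathfrak a=\Ker j,\n'$ are all $T$-invariant (for $\mathfrak a$, because $j_z=0$ forces $j_{Tz}=3Tj_z=0$). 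Diagonalizing $T^2$ produces orthogonal decompositions $\v=\bigoplus_\lambda V_\lambda$ and $\z=\bigoplus_\mu N_\mu$ with $\lambda,\mu<0$. From $T[x,y]=3[Tx,y]=3[x,Ty]$ one deduces $T^2[x,y]=9[T^2 x,y]=9[x,T^2 y]$, so $[V_\lambda,V_\mu]=0$ whenever $\lambda\neq\mu$ and $[V_\lambda,V_\lambda]\subset N_{9\lambda}$. Since $\n'=[\v,\v]$ is spanned by such brackets, this gives an orthogonal, $\ad$-invariant decomposition
\[
\n = \mathfrak a \,\oplus\, \bigoplus_\lambda \bigl(V_\lambda\oplus N'_\lambda\bigr), \qquad N'_\lambda := N_{9\lambda}\cap\n'.
\]

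Next, I would define $J$ piecewise by $J|_{V_\lambda}=(-\lambda)^{-1/2}\,T$ and $J|_{N'_\lambda}=\tfrac{1}{3}(-\lambda)^{-1/2}\,T$; a direct check gives $(J|_{V_\lambda})^2=-I$ and $(J|_{N'_\lambda})^2=\tfrac{1}{9}(-\lambda)^{-1}\cdot 9\lambda\,I=-I$. On the flat summand $\mathfrak a$, I take $J|_{\mathfrak a}$ to be any orthogonal complex structure (which exists since $T|_{\mathfrak a}$ is skew-symmetric and invertible, forcing $\dim\mathfrak a$ even). Each block of $J$ is a scalar multiple of a skew-symmetric operator, so $J$ is skew-symmetric with $J^2=-I$, hence orthogonal; this is exactly the assertion that $g$ is Hermitian with respect to $J$.

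Finally, bi-invariance amounts to $J[x,y]=[Jx,y]$ for all $x,y\in\n$, which by antisymmetry of the bracket automatically implies $J[x,y]=[x,Jy]$. Brackets involving an element of $\z$ or $\mathfrak a$ vanish on both sides by centrality, and the cross-brackets $[V_\lambda,V_\mu]$ with $\lambda\neq\mu$ vanish too. The only nontrivial case is $x,y\in V_\lambda$, where $[x,y]\in N'_\lambda$ and
\[
J[x,y]=\tfrac{1}{3}(-\lambda)^{-1/2}\,T[x,y]=(-\lambda)^{-1/2}\,[Tx,y]=[Jx,y],
\]
using precisely the KY identity $T[x,y]=3[Tx,y]$. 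I expect the main obstacle to be setting up the spectral decomposition cleanly and verifying that $j_{T^2 z}=9\,j_z T^2$ genuinely pairs the $T^2$-eigenspaces of $\z$ meeting $\n'$ with those of $\v$ according to the shift $\lambda\mapsto 9\lambda$; without this matching the rescaling recipe above would be inconsistent, and it is exactly this pairing that makes $J$ a Lie algebra homomorphism and hence induces a bi-invariant complex structure on $N$.
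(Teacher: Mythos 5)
Your argument is correct and follows essentially the same route as the paper: a spectral decomposition of $\v$ and $\n'$ adapted to $T$, followed by rescaling $T$ by $1/a$ on the $\v$-blocks and by $1/(3a)$ on the corresponding commutator blocks in $\n'$ to produce the bi-invariant orthogonal $J$. The only cosmetic difference is that you organize the decomposition by eigenspaces of $T^2$ rather than by the $2$-dimensional singular blocks $\mathfrak{v}_i=\operatorname{span}\{e_i,f_i\}$ grouped according to equal singular values, so you can bypass the finer structural facts (such as $[\mathfrak{v}_i,\mathfrak{v}_i]=0$ and the resulting ideals $\mathfrak{n}_j$) that the original proof records along the way.
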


They also showed that for $2$-step nilpotent complex Lie groups $\mathbb{G}$ arising from graphs the space of KY tensors is  $1$-dimensional, see \cite[Theorem 4.1]{AD20}. Furthermore, if the complex Lie group is connected then any non-zero KY tensor is invertible, thus non-parallel.
In \cite[Proposition 4.9]{dBM19} this was generalized to any irreducible $2$-step nilpotent metric Lie algebra. Indeed, it is proved that an irreducible $2$-step nilpotent metric Lie algebra admits (up to sign) at most one bi-invariant orthogonal complex structure.

\medskip

Note that \cite[Proposition 3.5]{AD20}  and Theorem \ref{decomposicion12nilpotente} give a correspondence between invertible left-invariant KY tensor and bi-invariant complex structures on $2$-step nilpotent Lie algebras.
This result was also proved by del Barco and Moroianu in a different way. 

\begin{prop}\cite[Proposition 4.7]{dBM19}\label{J bi-inv dB-M}
	On an irreducible $2$-step nilpotent metric Lie algebra, there is a one to
	one correspondence between non-zero KY 2-forms (up to scaling), and orthogonal bi-invariant complex structures.
\end{prop}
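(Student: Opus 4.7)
The plan is to combine the two constructions already recalled from \cite{AD20} with the irreducibility hypothesis, which will rule out trivial pieces and force any non-zero Killing tensor to be invertible. First, I will reduce to the invertible case. Let $T$ be the skew-symmetric endomorphism associated to a non-zero KY $2$-form $\omega$. Applying \cite[Theorem 3.3]{AD20} decomposes $(\n,\pint)$ as an orthogonal direct product of ideals $\n_1\times\n_2$ with $T|_{\n_1}=0$ and $T|_{\n_2}$ an invertible KY tensor on $\n_2$. Since $\n$ is irreducible and $T\neq 0$, the factor $\n_1$ must be zero, and thus $T$ is invertible on all of $\n$. Irreducibility also forces the flat summand $\mathfrak{a}=\ker j\subset\z$ to vanish, since $\mathfrak{a}$ would otherwise give rise to a non-trivial flat de Rham factor; hence $\z=\n'$. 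Theorem \ref{decomposicion12nilpotente} then delivers a bi-invariant orthogonal complex structure $J_\omega$ on $\n$, Hermitian with respect to $\pint$. In the proof of that theorem, $J_\omega$ is recovered from $T$ by the rule that, up to a single overall scalar, $T$ acts as $J_\omega$ on $\v$ and as $3J_\omega$ on $\n'$.

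Conversely, given a bi-invariant orthogonal complex structure $J$, the construction in \cite[Proposition 3.5]{AD20} yields a KY tensor $T_J$ with $T_J|_\v=J|_\v$ and $T_J|_{\n'}=3J|_{\n'}$; the additional summand on $\mathfrak{a}$ appearing in the general recipe is absent here because $\mathfrak{a}=0$. The $2$-form $\omega_J(x,y)=\langle T_J x,y\rangle$ is then a non-zero KY $2$-form. These two assignments are mutually inverse up to scaling on the KY side and sign on the complex-structure side: the round trip $\omega\mapsto J_\omega\mapsto \omega_{J_\omega}$ recovers $\omega$ up to a scalar, while $J\mapsto\omega_J\mapsto J_{\omega_J}$ recovers $J$ up to sign. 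The uniqueness of the bi-invariant orthogonal complex structure up to sign on an irreducible $2$-step nilpotent metric Lie algebra is exactly \cite[Proposition 4.9]{dBM19}, which pins down the remaining ambiguity and closes the bijection.

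The main obstacle I expect is verifying rigorously the rigidity behind the ratio $3$ between the action of $T$ on $\v$ and on $\z$ dictated by the KY equation $T[x,y]=3[Tx,y]$ of Theorem \ref{teoKY2nilpotente}. One must check that this forces $T$ to be a uniform scalar multiple of $J_\omega|_\v\oplus 3J_\omega|_{\n'}$, precluding any independent rescaling on $\v$ and on $\n'$; only then does a single overall scalar suffice to witness the equivalence, so that the correspondence is genuinely one-to-one rather than one-to-many. The remainder of the argument is then a bookkeeping check that the maps $J\mapsto \omega_J$ and $\omega\mapsto J_\omega$ are well-defined on equivalence classes and strictly inverse to each other in the irreducible setting.
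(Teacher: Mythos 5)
Your argument is correct, but it follows the Andrada--Dotti route rather than the proof of \cite[Proposition 4.7]{dBM19} that the paper actually sketches for this statement. You reduce to an invertible $T$ via \cite[Theorem 3.3]{AD20}, use irreducibility to kill both the kernel factor and the flat summand $\mathfrak a=\Ker j$, and then rely on the spectral decomposition of $T|_{\v}$ into $2$-planes together with the bracket relations of Theorem \ref{teoKY2nilpotente} to force all singular values $a_i$ to coincide --- the point you rightly single out as the crux: several eigenvalue groups would produce a nontrivial orthogonal splitting into ideals, contradicting irreducibility, so $r=1$ and $T$ is an overall scalar times $J|_{\v}\oplus 3J|_{\n'}$. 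The paper's proof of this proposition instead forms the symmetric endomorphism $S$ with $S|_{\z}=\tfrac19\alpha_0^2$ and $S|_{\v}=\alpha_2^2$ and uses irreducibility as a Schur-type argument to get $S=\lambda\operatorname{Id}$ with $\lambda<0$, after which $J$ is obtained by normalizing $\alpha_2$ and $\tfrac13\alpha_0$; that route is shorter and avoids the block-by-block case analysis, while yours produces the explicit matrix form of $T$ and yields directly that the space of KY $2$-forms is one-dimensional (the survey itself records, right after the sketch, that the Andrada--Dotti argument specializes in the irreducible case exactly as you describe). One caveat: your appeal to \cite[Proposition 4.9]{dBM19} to ``close the bijection'' should be dropped --- in \cite{dBM19} that uniqueness statement is downstream of the correspondence you are proving, so invoking it risks circularity, and it is in any case unnecessary: your explicit formulas already show that $\omega\mapsto J_\omega\mapsto\omega_{J_\omega}$ returns a positive multiple of $\omega$ and that $J\mapsto\omega_J\mapsto J_{\omega_J}$ returns $J$ on the nose, which is all the bijection requires.
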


We will explain the main ideas of both approaches to prove  Theorem \ref{decomposicion12nilpotente} and Proposition \ref{J bi-inv dB-M}.

\begin{proof}[Sketch of proof] 
In Theorem \ref{decomposicion12nilpotente} the strategy is to use the spectral decomposition associated to a given Killing tensor $T$. Indeed, taking into the account the decomposition $\n=\mathfrak{z}\oplus \mathfrak{v}=\mathfrak a\oplus\n'\oplus\v$, and the fact that $T$ preserves them, there exist an orthonormal basis $\{e_i,f_i\}$ for $i=1,\dots, n$ with $2n=\dim\v$
such that 
$Te_i=a_if_i$, $Tf_i=-a_if_i$, with $ 0 <a_1 \leq a_2 \leq\dots\leq a_n$. 

Then, they defined $\mathfrak{v}_i = \operatorname{span}\{e_i , f_i \}$, thus $\mathfrak{v}=\oplus_{i=1}^n \mathfrak{v}_i$. Using, \eqref{teoKY2nilpotente} it can be shown that for all $i$, $[\mathfrak{v}_i,\mathfrak{v}_i]=0$;  for each $i$ there exists $j$ such that $[\mathfrak{v}_i,\mathfrak{v}_j]\neq 0$, thus $a_i=a_j$; $[\mathfrak{v}_i , \mathfrak{v}_j ]$ is $2$-dimensional and $T$-invariant. 

It suggests to define the set of integers
$\{i_j : j = 1, \dots , r\} $ such that
$i_1 = 1, i_j = \text{min}\{k:a_k  \neq a _{i_{j-1} }\}$, and  $W_j=\bigoplus_{i:a_i=a_{i_j}}\mathfrak{v}_i\subset\mathfrak{v}$. Moreover, $\mathfrak{v} = \bigoplus_{ j} W _j$ and $[W_j , W_k ] = 0$ if $j\neq k$. Then, $\mathfrak{n}_j = W_j \oplus [W_j , W_j ]$ are ideal ideals of $\n$,   $\mathfrak{n}'=\bigoplus_j [W_j , W_j ]$, and $\mathfrak{n}'\oplus \mathfrak{v}=\oplus_{j}\mathfrak{n}_j$.
Therefore, $\mathfrak{n}$ is decomposed into a direct sum of orthogonal $T$-invariant ideals
	$$\mathfrak{n}=\mathfrak{a}\oplus\mathfrak{n}_1\oplus \cdots \oplus \mathfrak{n}_r.$$
Now, it can be define \color{black}a bi-invariant complex structure on $\n$ such that:
$ J|_{\mathfrak{a}}$ is any almost complex structure compatible with $\pint$, 
$J|_{W_ j} = \frac{1}{a_{ i_j} }T|_{W_j}$ and $ J|_{[W _j ,W_j ]} =\frac{1}{3a_{i_j}}T|_{[W_j ,W_j ]}$.

On the other hand, the main idea in Proposition \ref{J bi-inv dB-M} is to use the de Rham decomposition in Theorem \ref{deRham} to reduce the problem to the irreducible case. 
Using \cite[Proposici\'on 4.1]{dBM19} a Killing 2-form $\alpha$ on $(\mathfrak{n},\pint)$ can be decomposed as  $\alpha=\alpha_0+\alpha_2$,  where $\alpha_0$, $\alpha_2$ are 2-forms on $\mathfrak{z}$ and $\mathfrak{v}$, respectively.

Let $S$ be a symmetric endomorphism $S$ associated to the Killing 2-form $\alpha$ defined such that $S$ preserves $\mathfrak{z}$ and its ortogonal complement $\mathfrak{v}$, and 
$S|_{\mathfrak{z}}=\frac{1}{9}{\alpha_0}^2$ and $S|_{\mathfrak{v}}={\alpha_2}^2$.
The irreducibility condition on $\mathfrak{n}$, 
implies $S=\lambda Id_{\mathfrak{n}}$. Moreover, $\lambda<0$ if $\alpha\neq0$. Then, the endomorphism $J$ defined by $J|_{\mathfrak{v}}= \frac{1}{\sqrt{-\lambda}}\alpha_2$ and $J|_{\mathfrak{z}}= \frac{1}{3\sqrt{-\lambda}}\alpha_0$ turns out to be a bi-invariant complex structure on $\mathfrak{n}$. 
 
Conversely,  let $J$ be an orthogonal bi-invariant complex structure on $\mathfrak{n}$. The
bi-invariance of  $J$ implies that it preserves the center and its ortogonal complement.  Defining  $\alpha_2:= J|_{\mathfrak{v}}$ and $\alpha_0:= 3J|_{\mathfrak{z}}$, $\alpha$ turns out to be  a Killing 2-form. 
\end{proof}

Note that, in the argument of Andrada and Dotti, if we assume that $\n$ is irreducible, then $\mathfrak a=0$, $r=1$ in the notation above, and the endomorphism $T$ (and $J$) can be written as
$$T=\begin{pmatrix}
	0&-3aI_p&&\\
	3aI_p&0&&\\
	&&0&-aI_q\\
	&&aI_q&0
\end{pmatrix},$$
where $p=\dim\z$ and $q=\dim\v$, and therefore  there is only one KY tensor, up to scaling.
This result was generalized by del Barco and Moroianu. Indeed, exploiting the de Rham decomposition of $(N,g)$ as in Theorem \ref{deRham}, they showed their main result concerning to KY $2$-forms.

\begin{teo}\cite[Theorem 4.11]{dBM19}
	Let $(N, g)$ be a simply connected $2$-step nilpotent Lie group endowed
	with a left-invariant Riemannian metric. Then any invariant Killing 2-form is the sum
	of left-invariant Killing 2-forms on its de Rham factors. Moreover, the dimension of 
	the space of left-invariant Killing 2-forms on $(N, g)$, is $$\dim\mathcal K^2(N,g)=\frac{d(d-1)}{2}+r$$
	where $d$ is the dimension of the Euclidean factor in the de Rham decomposition of
	$(N, g)$, and $r$ is the number of irreducible de Rham factors admitting bi-invariant
	orthogonal complex structures.
\end{teo}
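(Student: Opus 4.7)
The plan is to reduce the computation of $\dim\mathcal K^2(N,g)$ to the ingredients already recalled in the excerpt: the de Rham decomposition of $(N,g)$, the behaviour of Killing forms under Riemannian products, the classification of parallel forms on irreducible non-abelian nilpotent factors, and the correspondence between non-zero Killing $2$-forms on such factors and orthogonal bi-invariant complex structures. First, I would apply Theorem~\ref{deRham} to split the Lie algebra orthogonally as $\n = \mathfrak a \oplus \n_1 \oplus \cdots \oplus \n_m$, where $\mathfrak a$ is abelian of dimension $d$ and each $\n_i$ (for $i = 1, \dots, m$) is irreducible non-abelian nilpotent. Correspondingly, $(N,g)$ is the Riemannian product of the Euclidean factor $\R^d$ and the irreducible non-abelian factors $(N_i, g_i)$.

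Next, I would iterate the product decomposition result (Proposition 3.2 of \cite{dBM20} as recalled above) applied to a left-invariant Killing $2$-form $\alpha$ on $N$. Since $\alpha$ has degree $2$, on any two-factor splitting $N = N' \times N''$ the bi-degree decomposition is $\alpha = \alpha^{(2,0)} + \alpha^{(1,1)} + \alpha^{(0,2)}$; the proposition asserts that the pure components are Killing on each factor, and that the mixed $(1,1)$-component is parallel on the product. Iterating through all the de Rham factors, $\alpha$ decomposes as a sum of pure terms $\alpha^{(0)} \in \Lambda^2 \mathfrak a^*$ and $\alpha^{(i)} \in \Lambda^2 \n_i^*$ (Killing on their respective factors) plus mixed terms $\alpha^{(i,j)} \in \n_i^* \otimes \n_j^*$ for $i \neq j$, each parallel on the ambient product. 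To rule out the mixed terms, choose a basis $\{f_l^*\}$ of $\n_j^*$ and write $\alpha^{(i,j)} = \sum_l \omega_l \wedge f_l^*$ with $\omega_l \in \n_i^*$. Because the Levi-Civita connection of a Riemannian product splits block-diagonally with respect to the factors, the parallel condition $\nabla_x \alpha^{(i,j)} = 0$ for $x \in \n_i$ forces $\nabla_x \omega_l = 0$ for every $l$, and symmetrically for the other factor. By the proposition recalled from \cite[Proposition 3.3]{dBM20}, on an irreducible non-abelian nilpotent factor $(\n_k, \pint_k)$ the only parallel forms are constants and multiples of the volume form; in particular, parallel $1$-forms vanish since $\dim \n_k \geq 3$. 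Hence every mixed $\alpha^{(i,j)}$ involving at least one non-abelian factor is zero, and the cross-terms among directions inside $\mathfrak a$ are already contained in $\alpha^{(0)}$.

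It only remains to count. Every $2$-form $\alpha^{(0)} \in \Lambda^2 \mathfrak a^*$ is parallel on $\mathfrak a$, hence Killing on $N$, contributing a subspace of dimension $\binom{d}{2} = \frac{d(d-1)}{2}$. On each irreducible non-abelian factor $\n_i$, Proposition~\ref{J bi-inv dB-M} identifies the space of non-zero Killing $2$-forms up to scaling with the set of orthogonal bi-invariant complex structures, so $\dim \mathcal K^2(\n_i, \pint_i) = 1$ exactly for the $r$ factors admitting such a structure and $0$ otherwise. Adding the contributions yields $\dim \mathcal K^2(N,g) = \frac{d(d-1)}{2} + r$, as required. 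The most delicate step is the third one: one must verify cleanly that the parallelism of a general (not decomposable) mixed $(1,1)$-form on the product splits into independent parallelism conditions on each factor, so that the classification of parallel forms on irreducible factors can be invoked componentwise. Once this bookkeeping is done, the rest follows immediately from the previously established propositions.
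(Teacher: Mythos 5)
Your proposal is correct and follows essentially the route indicated in the survey (and used in \cite{dBM19}): reduce via the de Rham decomposition of Theorem~\ref{deRham}, kill the mixed components using the product decomposition of Killing forms together with the fact that irreducible non-abelian nilpotent factors carry no parallel forms in intermediate degrees, and then count $\binom{d}{2}$ parallel $2$-forms on the flat factor plus one Killing $2$-form per factor admitting an orthogonal bi-invariant complex structure via Proposition~\ref{J bi-inv dB-M}. The bookkeeping step you flag (splitting the parallelism of a mixed $(1,1)$-form into componentwise conditions) is handled exactly as you describe, since the Levi-Civita connection of the product is block-diagonal.
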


As a consequence of this result a classification in low dimensions is obtained.

\begin{teo}\cite[Theorem 4.14]{dBM19}
	There exist exactly 14 isomorphism classes of (non-abelian) $2$-step
	nilpotent Lie algebras of dimension $p \leq 8$ admitting an inner product for which
	the corresponding simply connected Riemannian Lie group carries non-zero KY
	2-forms: $\R^2\oplus\h_3$,  $\R^3\oplus\h_3$, $\h_3^\C$, $\R\oplus\h_3^\C$, $\R^2\oplus\h$ with $\h\in\mathcal{N}_5$ or $\h\in\mathcal{N}_6$.	
\end{teo}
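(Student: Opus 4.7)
The plan is to apply Theorem 4.11 (the dimension formula stated immediately above) and reduce the classification to an algebraic enumeration problem in low dimensions. By Theorem 4.11, a metric $2$-step nilpotent Lie algebra $(\n,\pint)$ carries a non-zero left-invariant Killing $2$-form if and only if $\tfrac{d(d-1)}{2}+r>0$, that is, either $d\geq 2$ or $r\geq 1$. By Theorem \ref{deRham}, the integers $d$ and $r$ are read off an orthogonal decomposition of $(\n,\pint)$ into ideals, so the existence of \emph{some} inner product on $\n$ with this property is equivalent to an abstract Lie algebra decomposition $\n=\R^d\oplus\n_1\oplus\cdots\oplus\n_k$ where either $d\geq 2$, or at least one summand $\n_j$ admits a bi-invariant orthogonal complex structure for a suitable inner product.

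Next I would use Proposition \ref{J bi-inv dB-M} to identify which irreducible non-abelian summands can contribute to $r$. A complex $2$-step nilpotent Lie algebra of real dimension $p\leq 8$ must have complex dimension $p/2\in\{3,4\}$; in complex dimension $3$ the only non-abelian $2$-step nilpotent algebra is $\h_3^\C$, and in complex dimension $4$ the only one is $\C\oplus\h_3^\C$ (since a surjective skew form on $\C^2$ takes values in a $1$-dimensional space, forcing $\dim_\C\n'=1$ and hence a complex-abelian direct factor). Consequently, only $\h_3^\C$ can appear as an irreducible complex summand in our range, and any ``hidden'' complex algebra of real dim $8$ reduces to $\R^2\oplus\h_3^\C$, which already falls under the second condition.

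Third, I would invoke the known classification of non-abelian $2$-step nilpotent Lie algebras of real dimension up to $8$, writing $\mathcal N_p$ for the set of isomorphism classes in dimension $p$, and go through the cases. Dimensions $3$ and $4$ yield no examples, since $\h_3$ and $\R\oplus\h_3$ satisfy neither $d\geq 2$ nor the complex condition. Dimension $5$ yields only $\R^2\oplus\h_3$. Dimension $6$ yields $\R^3\oplus\h_3$ (via $d=3$) and $\h_3^\C$ (via $r=1$). Dimension $7$ yields $\R\oplus\h_3^\C$ together with all algebras of the form $\R^2\oplus\h$ for $\h\in\mathcal N_5$ (here $d\geq 2$ automatically). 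Dimension $8$ yields exactly the algebras $\R^2\oplus\h$ for $\h\in\mathcal N_6$ (this family absorbs $\R^2\oplus\h_3^\C$). Adding the resulting counts using the known cardinalities $|\mathcal N_5|$ and $|\mathcal N_6|$ produces the stated total of $14$ isomorphism classes.

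The main obstacle is essentially bookkeeping rather than conceptual: one must invoke the explicit classification of non-abelian $2$-step nilpotent Lie algebras in dimensions $5$ and $6$, and must check pairwise non-isomorphism of the listed families (for example, showing that $\R^4\oplus\h_3$ appearing as $\R^2\oplus(\R^2\oplus\h_3)$ is genuinely counted once, or that $\R^2\oplus\h_3^\C$ from $\mathcal N_6$ is not confused with the dim $7$ item $\R\oplus\h_3^\C$). One must also confirm that no further complex Lie algebra appears beyond $\h_3^\C$ in real dimension $\leq 8$, which is handled by the dimension-counting argument above.
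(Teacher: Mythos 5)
Your proposal is correct and follows essentially the same route the survey indicates for this result: it is presented there precisely as a consequence of the dimension formula $\dim\mathcal K^2(N,g)=\tfrac{d(d-1)}{2}+r$, combined with the correspondence between non-zero KY $2$-forms on irreducible factors and bi-invariant orthogonal complex structures, which singles out $\h_3^\C$ as the only such factor in real dimension $\leq 8$. The one point to tighten is your complex-dimension-$4$ step, which as written only treats $\dim_\C\v=2$ and should also dispose of $\dim_\C\v=3$, $\dim_\C\z=1$ (a skew-symmetric form on an odd-dimensional space is degenerate, so $\v$ would meet the center); with that, your case-by-case count $1+2+4+7=14$ goes through.
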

In the above theorem $\mathcal{N}_i$ with $i=5,6$ denotes the set of isomorphism classes of real  $2$-step nilpotent Lie algebras of real dimension $i$. Note that $\dim\mathcal{N}_5=3$ and $\dim\mathcal{N}_6=7$.

\section{Higher degree Killing forms on $2$-step nilpotent Lie algebras}

We discuss here KY $p$-forms on $2$-step nilpotent Lie algebras. We split into two cases, namely $p=3$ and then $p>3$.
The case $p=3$ was considered in \cite{dBM19}, and its approach is the same as for $2$-forms.
They first reduced the study of  KY $3$-forms on $2$-step nilpotent Lie algebras to the case where $\n$ has no abelian factor. Indeed, \cite[Proposition 5.3]{dBM19} shows that a KY $3$-form on $\n$ is a sum of a Killing form on $\n_0$ and a $3$-parallel form on $\mathfrak a$.
Then, they  considered only irreducible Lie algebras, since \cite[Proposition 5.6]{dBM19} shows that a KY $3$-form on $\n$ is a sum of Killing forms on each irreducible ideal. After that, the authors obtained the following characterization of irreducible $2$-step nilpotent Lie algebras admitting KY $3$-forms. Recall the definition of the skew-symmetric endomorphism $j(z) : \v \to \v$ associated to a $2$-step nilpotent metric Lie algebras given in \eqref{jz}.

\begin{prop}\label{3-form_conditions}\cite[Proposition 5.8]{dBM19}
An irreducible $2$-step nilpotent Lie algebra $\mathfrak{n}$ admits a non-zero Killing
3-form if and only if the following conditions hold:
\begin{enumerate}
	\item $j(\z)$ is a subalgebra of $\mathfrak{so}(\v)$,
	\item for each $z\in\z$, the map $z'\to j^{-1}[j(z),j(z')]$ is in $\mathfrak{so}(\z)$.
\end{enumerate}
In this case the space of KY $3$-forms is $1$-dimensional.
\end{prop}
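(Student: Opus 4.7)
The plan is to set up the KY 3-form equation in coordinates adapted to the splitting $\n = \z \oplus \v$, and to show that the resulting algebraic constraints on the components of $\alpha$ are equivalent to the two conditions on $j$. Any $\alpha \in \Lambda^3\n^*$ decomposes as $\alpha = \sum_{k=0}^{3} \alpha^{(k)}$ with $\alpha^{(k)} \in \Lambda^k\v^* \otimes \Lambda^{3-k}\z^*$, and by \eqref{v-degree} the even and odd $\v$-degree parts are separately KY. On a 2-step nilpotent Lie algebra the differential $d$ kills $\v^*$ and maps $\z^*$ into $\Lambda^2\v^*$ via $d\xi(x,y) = -\langle j(z_\xi)x, y\rangle$, so $d\alpha^{(k)}$ sits in $\Lambda^{k+2}\v^* \otimes \Lambda^{2-k}\z^*$ and is encoded entirely by $j$.

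Next I would unpack $\nabla_X \alpha = \tfrac{1}{4}\, X \iprod d\alpha$ using \eqref{LC-j}, separately for $X \in \v$ and $X \in \z$, and read off algebraic equations relating the components $\alpha^{(k)}$. For $X \in \z$ the connection acts only on $\v$ via $-\tfrac{1}{2}j(X)$, which produces a direct compatibility between each $\alpha^{(k)}$ and the $j$-action. For $X \in \v$ the connection mixes the $\z$- and $\v$-parts, producing cross-conditions coupling consecutive components $\alpha^{(k)}$ and $\alpha^{(k \pm 1)}$.

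At this stage, irreducibility of $(\n,\pint)$ has to be used to trim the solution space. Mimicking the argument used for KY 2-forms in Proposition \ref{J bi-inv dB-M}, I would associate to $\alpha$ a $j$-equivariant symmetric endomorphism on $\v$ and on $\z$, and invoke irreducibility to conclude that each is a scalar multiple of the identity. This should force $\alpha^{(3)}\in\Lambda^3\v^*$ to take the form $\alpha^{(3)}(x,y,z) = c\,\langle j(\phi(x,y)), z\rangle$ for some skew $\phi\colon \v \times \v \to \z$, and analogously pin down the remaining components in terms of $j$ and a single global scale.

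The surviving KY equations should then assert precisely that $[j(z), j(z')] \in j(\z)$ for all $z, z' \in \z$ (condition (1), which lets one define a bracket $[\cdot,\cdot]_\z := j^{-1}[j(\cdot), j(\cdot)]$ on $\z$) and that $z' \mapsto [z, z']_\z$ is skew-symmetric with respect to $\pint$ (condition (2)). For the converse, assuming (1) and (2), $\z$ becomes a Lie algebra with an ad-invariant inner product, and one can explicitly construct a KY 3-form by combining its Cartan 3-form with the $j$-twisted 3-form on $\v$, verifying the equation by a direct computation from \eqref{LC-j}. The $1$-dimensionality claim then follows because the construction carries a single free parameter. The hard part will be the irreducibility reduction: unlike the 2-form case, a KY 3-form has three potentially nonzero mixed components $\alpha^{(1)},\alpha^{(2)},\alpha^{(3)}$ and the KY system cross-couples them, so disentangling these couplings by spectral bookkeeping is where the real work lies.
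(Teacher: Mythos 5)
The survey itself does not reproduce a proof of this proposition; it only quotes it from [dBM19] and sketches the surrounding strategy (decomposition by $\v$-degree, de Rham reduction to the irreducible case, characterization of the surviving components in terms of $j$). Your skeleton is consistent with that strategy: the decomposition $\alpha=\sum_k\alpha^{(k)}$, the observation that $d$ annihilates $\v^*$ and sends $z^*$ to $-\langle j(z)\cdot,\cdot\rangle\in\Lambda^2\v^*$, and the case split $X\in\v$ versus $X\in\z$ in $\nabla_X\alpha=\tfrac14\,X\iprod d\alpha$ using \eqref{LC-j} are all the right opening moves.

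What you have written, however, is a programme rather than a proof, and the steps you defer are precisely the content of the proposition. Concretely: (i) you never establish that the odd $\v$-degree part $\alpha^{(1)}+\alpha^{(3)}$ of a Killing $3$-form on an irreducible non-abelian $\n$ must vanish; since the generator of $\mathcal K^3$ is the bracket form sitting in $\Lambda^2\v^*\otimes\z^*$ (of $\v$-degree $2$), the odd components have to be killed outright, and your ``single global scale'' cannot absorb four a priori independent components without an argument. (ii) The pivotal irreducibility step is asserted only by analogy with the $2$-form case: for a $3$-form the component $\alpha^{(2)}$ is a linear map $\z\to\mathfrak{so}(\v)$ rather than a single skew endomorphism, so it is not clear which $j$-equivariant symmetric operator you intend to square or trace in order to invoke Schur-type rigidity; identifying and controlling that operator is where the actual work of the proof lies, and you acknowledge but do not resolve it. (iii) The proposed normal form $\alpha^{(3)}(x,y,z)=c\,\langle j(\phi(x,y)),z\rangle$ is not well-typed: $j(\phi(x,y))$ is an endomorphism of $\v$ and cannot be paired with the vector $z$ by the inner product. (iv) Both the identification of the residual equations with conditions (1)--(2) and the converse verification that the resulting $3$-form is Killing under those conditions are left as ``should'' statements with no computation behind them. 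In short, the framework is correct but the theorem remains unproved in your write-up.
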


The latter two conditions  are related to the property of the
corresponding simply connected Riemannian Lie group to be naturally reductive. 
Recall that a homogeneous Riemannian manifold $(M, g)$ is naturally reductive if there is a
transitive group $G \subset Iso(M, g)$ and a reductive decomposition
$\g = \h \oplus\mathfrak m$ of the Lie algebra $\g$ of $G$, such that
$g([x, y]_{\mathfrak m}, z) + g(y, [x, z]_{\mathfrak m}) = 0$, for all $x, y, z \in\mathfrak m$.

The next result characterizes $2$-step nilpotent
Lie groups which are naturally reductive in terms of the conditions in the last proposition. For more details of this relation see \cite{Gordon}.

\begin{teo}\cite[Theorem 5.10]{dBM19}
Let $(N, g)$ be a simply connected $2$-step nilpotent Lie group
without Euclidean factor (in the de Rham sense). Let $\n$ denotes its Lie algebra and consider the orthogonal
decomposition $\n = \z\oplus\v$. Then $N$ is naturally reductive if and only if $\n$ satisfies the
conditions in Proposition \ref{3-form_conditions}.
\end{teo}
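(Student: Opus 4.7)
The plan is to produce, in each direction, an explicit naturally-reductive presentation $G/K$ with $G = N \rtimes K$ and $\mathfrak{k}$ a subalgebra of skew-symmetric derivations of $\n$. Since $(N,g)$ has no Euclidean factor, Theorem \ref{deRham} forces $\n' = \z$ and hence the injectivity of $j\colon \z \to \mathfrak{so}(\v)$. Under this hypothesis, condition (1) lets me define $D_z \in \operatorname{End}(\z)$ by $j(D_z z') = [j(z), j(z')]$, and condition (2) asserts $D_z \in \mathfrak{so}(\z)$.

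For the converse direction, I would set $A_z := j(z) \oplus D_z \in \mathfrak{so}(\v) \oplus \mathfrak{so}(\z) \subset \mathfrak{so}(\n)$ and check that $A_z$ is a derivation of $\n$. The only non-trivial case is $x, y \in \v$: the identity $A_z[x,y] = [A_z x, y] + [x, A_z y]$ has both sides in $\z$, so it suffices to pair with an arbitrary $z'' \in \z$, at which point $\langle j(w)x, y\rangle = \langle w, [x,y]\rangle$ converts the identity into the Jacobi relation for $j(z), j(z'), j(z'')$ in $\mathfrak{so}(\v)$. The same Jacobi relation shows $[A_z, A_{z'}] = A_{D_z z'}$, so $\mathfrak{k} := \{A_z : z \in \z\}$ is a Lie subalgebra of $\operatorname{Der}(\n) \cap \mathfrak{so}(\n)$. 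Let $K$ be its connected Lie group acting on $N$ by automorphisms, form $G = N \rtimes K$ with Lie algebra $\g = \n \rtimes \mathfrak{k}$, and take the reductive complement $\mathfrak{m} := \{(x, A_{\pi_\z x}) : x \in \n\}$ (with $\pi_\z$ the orthogonal projection onto $\z$), equipped with the metric transported from $\n$ via the first projection. Expanding $[X, Y]$ for $X, Y \in \mathfrak{m}$ and using skew-symmetry of each $A_w$ to cancel the cross-terms involving $A_{\pi_\z x}$, the naturally-reductive condition reduces to
\[
\langle [x,y], z\rangle + \langle y, [x,z]\rangle = \langle A_{\pi_\z y} x, z\rangle + \langle y, A_{\pi_\z z} x\rangle
\]
for all $x, y, z \in \n$. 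A short case analysis on the $\v$/$\z$-components shows every case is trivial except $x, y, z \in \z$, which reduces to $D_y z + D_z y = 0$, immediate from the skew-symmetry of the commutator in $j(\z)$.

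For the forward direction, I would invoke the classical description of isometries of nilmanifolds (Wilson): any transitive connected isometry group of $(N, g)$ has Lie algebra $\g = \n + \mathfrak{k}_0$ with $\mathfrak{k}_0 \subset \operatorname{Der}(\n) \cap \mathfrak{so}(\n)$, and any reductive complement is a graph $\mathfrak{m} = \{(x, \psi(x)) : x \in \n\}$ over some linear map $\psi\colon \n \to \mathfrak{k}_0$. Running the same computation as in the converse produces the identity displayed above, with $A_{\pi_\z(\cdot)}$ replaced by $\psi$. Specializing to $x, y \in \v$ and $z \in \z$ forces $\psi(z)|_\v = j(z)$; the derivation property of $\psi(z)$ on $\v$-elements then forces $[j(z), j(z')] \in j(\z)$ for all $z, z' \in \z$, which is condition (1), and identifies $\psi(z)|_\z$ with $D_z$; finally, specializing to $x, y, z \in \z$ yields the skew-symmetry of $D_z$, which is condition (2).

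The main obstacle is the forward direction: the choice of $\psi$ is not canonical, and in particular $\psi|_\v$ is under-determined by the naturally-reductive equation in the $x, y, z \in \v$ case. The argument has to show that any admissible $\psi$ necessarily satisfies $\psi(z)|_\v = j(z)$ on the central directions and that the derivation property on the $\z$-part produces (1) and (2) irrespective of the free part. Here the injectivity of $j$, guaranteed by the no-Euclidean-factor hypothesis, is what allows one to pass from the bilinear identities to the pointwise statements about $j(\z) \subset \mathfrak{so}(\v)$.
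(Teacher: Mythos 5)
Your argument is correct in substance. Note first that the survey states this theorem without proof, deferring to \cite{dBM19} (which in turn obtains it by matching conditions (1)--(2) of Proposition \ref{3-form_conditions} against Gordon's classical characterization of naturally reductive $2$-step nilpotent Lie groups \cite{Gordon}); what you give is essentially Gordon's direct construction, which is a perfectly good self-contained alternative: the converse builds the presentation $G=N\rtimes K$ with $\k=\{A_z\}$ and $\mathfrak{m}$ the graph of $x\mapsto A_{\pi_{\z}x}$, and the forward direction rests on Wilson's description of $\operatorname{Isom}(N,g)$ as $N\rtimes(\operatorname{Aut}(N)\cap O(\n))$. Three small points deserve tightening. (a) The derivation check for $A_z$ on $\v$ is not literally the Jacobi identity: pairing with $z''$ one needs $\la D_z[x,y],z''\ra=\la [j(z''),j(z)]x,y\ra$, which follows from condition (2) (skew-symmetry of $D_z$) together with $D_zz''=-D_{z''}z$; Jacobi is what gives $[D_z,D_{z'}]=D_{D_zz'}$, i.e.\ that $\k$ closes under brackets --- and this same identity also shows $[\k,\mathfrak{m}]\subset\mathfrak{m}$, which you should record explicitly since a reductive complement must be $\ad(\k)$-invariant. (b) In the forward direction $\psi$ takes values in $\operatorname{Der}(\n)\cap\mathfrak{so}(\n)$, not necessarily in $\k_0$; this is harmless, since all you use is that each $\psi(w)$ is a skew-symmetric derivation, hence preserves $\n'=\z$ (this is where the no-Euclidean-factor hypothesis enters) and therefore $\v=\z^{\perp}$. (c) The ``under-determination of $\psi|_{\v}$'' that you flag as the main obstacle is not actually an obstruction, and your own computation already disposes of it: the case $x,y\in\v$, $z\in\z$ of your identity pins down $\psi(z)|_{\v}=j(z)$ uniquely, and conditions (1) and (2) are then read off from the derivation property of $\psi(z)$ applied to $[\v,\v]=\z$ together with the skew-symmetry of $\psi(z)|_{\z}$, none of which involves the free part $\psi|_{\v}$.
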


With all these ingredients we can state the main result about KY $3$-forms on $2$-step nilpotent Lie group.

\begin{teo}\cite[Theorem 5.11]{dBM19}
	Let $(N, g)$ be a simply connected $2$-step nilpotent Lie group endowed
	with a left-invariant Riemannian metric. Then any invariant Killing 3-form is the sum
	of left-invariant Killing 3-forms on its de Rham factors. Moreover, the dimension of
	 the space of left-invariant Killing 3-forms on $(N, g)$, is
	 $$\dim\mathcal K^3(N,g)=\frac{d(d-1)(d-2)}{6}+r$$
	 where $d$ and $r$ denote the dimension of the Euclidean factor and the number of naturally
	 reductive factors in the de Rham decomposition of $(N, g)$, respectively.
\end{teo}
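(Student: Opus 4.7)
The plan is to combine the de Rham decomposition (Theorem \ref{deRham}) with the product splitting of left-invariant Killing forms (Proposition 3.2 of \cite{dBM20}) and the rigidity of parallel forms on irreducible non-abelian nilpotent Lie groups (Proposition 3.3 of \cite{dBM20}), and then to reduce the problem on each irreducible factor to the characterization given by Proposition \ref{3-form_conditions}.

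First, write $(N,g) = (\R^d,g_0) \times (N_1,g_1) \times \cdots \times (N_m,g_m)$ with each $(N_i,g_i)$ irreducible and non-abelian. Any left-invariant $3$-form $\alpha$ decomposes along the multi-grading coming from the factors as $\alpha = \sum_{(l_0,\dots,l_m)} \alpha_{l_0,\dots,l_m}$ with $\alpha_{l_0,\dots,l_m} \in \Lambda^{l_0}(\R^d)^* \otimes \Lambda^{l_1}\n_1^* \otimes \cdots \otimes \Lambda^{l_m}\n_m^*$ and $\sum l_i = 3$. Iterating Proposition 3.2 of \cite{dBM20} on the two-factor splittings $(N_0\times\cdots\times N_{i-1})\times (N_i\times\cdots\times N_m)$, one sees that $\alpha$ is Killing if and only if each component supported on a single factor is Killing on that factor, and every genuinely mixed component (involving two or more factors with positive degree) is parallel on $N$.

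Second, control these mixed parallel components. Because the Levi-Civita connection of $g$ respects the multi-grading, a parallel form in $\Lambda^{l_0}(\R^d)^* \otimes \Lambda^{l_1}\n_1^* \otimes \cdots \otimes \Lambda^{l_m}\n_m^*$ must, fixing any basis, have its ``slices'' in each tensor factor parallel as forms on that factor. On $\R^d$ every left-invariant form is parallel, but on an irreducible non-abelian factor only scalar multiples of $1$ and of the volume form are parallel, by Proposition 3.3 of \cite{dBM20}. Hence a nonzero mixed parallel component forces $l_j \in \{0,\dim N_j\}$ for every $j\geq 1$, with at least two of the $l_j$ positive. Since $\dim N_j \geq 3$ for $j\geq 1$, this would require total degree $\geq 6$, contradicting $\sum l_i = 3$. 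Therefore all mixed components vanish, and every left-invariant Killing $3$-form on $N$ is a direct sum
\[
\alpha \;=\; \alpha^{(0)} + \sum_{i=1}^{m} \alpha^{(i)},
\]
where $\alpha^{(0)} \in \mathcal{K}^3(\R^d,g_0)$ and $\alpha^{(i)}\in \mathcal{K}^3(N_i,g_i)$.

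Finally, count each summand. On $\R^d$ the Levi-Civita connection of left-invariant fields vanishes, so every $3$-form is parallel and hence Killing, giving $\dim\mathcal{K}^3(\R^d,g_0) = \binom{d}{3} = \tfrac{d(d-1)(d-2)}{6}$. On each irreducible factor, Proposition \ref{3-form_conditions} (together with the characterization of naturally reductive $2$-step nilpotent Lie groups via the same conditions) yields $\dim\mathcal{K}^3(N_i,g_i)=1$ if $N_i$ is naturally reductive and $0$ otherwise. Summing gives the announced formula. The main obstacle is the second step: carefully verifying that the mixed parallel components are forced to vanish, and in particular handling $3$-dimensional irreducible factors whose volume form is parallel and Killing but appears as a \emph{pure} summand $\alpha^{(i)}$ (already counted by Proposition \ref{3-form_conditions}, since $\h_3$ is naturally reductive) rather than as a mixed component, so that no double counting occurs.
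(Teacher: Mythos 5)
Your proposal is correct and follows essentially the same route as the source: the de Rham decomposition, the product splitting of Killing forms with mixed components forced to be parallel, the rigidity of parallel forms on irreducible non-abelian nilpotent factors, and the characterization of the irreducible factors carrying nonzero Killing $3$-forms via the naturally reductive condition. One small arithmetic slip: a mixed component involving the flat factor and a single irreducible factor has total degree at least $4$ rather than $6$, but this still exceeds $3$, so your conclusion that all mixed components vanish stands.
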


Using the relation between $2$-step nilpotent Lie algebras of naturally reductive
type and representations of compact Lie algebras shown in \cite{Lauret}, the authors concluded with a classification of $2$-step nilpotent Lie algebras of naturally reductive type of dimension $\leq 6$.

\begin{teo}
	There exist exactly 8 isomorphism classes of (non-abelian) $2$-step nilpotent
	Lie algebras of dimension $\leq 6$ admitting an inner product for which the
	corresponding simply connected Riemannian Lie group carries non-zero Killing $3$-forms: 
	$\h_3$,  $\R\oplus\h_3$, $\R^2\oplus\h_3$, $\h_5$, $\R^3\oplus\h_3$, $\h_3\oplus\h_3$, $\R\oplus\h_5$, $\n_{3,2}$ (the free $2$-step nilpotent Lie algebra in dimension $5$).	
\end{teo}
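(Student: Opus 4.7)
The plan is to combine the dimension formula from the preceding theorem with the classification of irreducible naturally reductive $2$-step nilpotent metric Lie algebras of small dimension drawn from \cite{Lauret}.

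First, I would write the de Rham decomposition $(\n,\pint) = \mathfrak a\oplus \n_1\oplus\cdots\oplus\n_m$, with $\mathfrak a$ the Euclidean factor of dimension $d$ and each $\n_i$ irreducible non-abelian $2$-step nilpotent. By the previous theorem,
\[
\dim\mathcal K^3(N,g)=\binom{d}{3}+r,
\]
where $r$ counts the naturally reductive factors among the $\n_i$. Hence $\n$ carries a non-zero left-invariant Killing $3$-form if and only if $d\geq 3$ or $r\geq 1$.

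Next, I would extract from \cite{Lauret} the list of irreducible naturally reductive $2$-step nilpotent Lie algebras of dimension $\leq 6$ via the correspondence with faithful orthogonal representations of compact Lie algebras (equivalently, the finitely many pairs $(\k,V)$ together with an invariant inner product on the center for which the resulting metric Lie algebra satisfies the two conditions of Proposition \ref{3-form_conditions} and is de Rham irreducible). A concrete case analysis shows this list consists precisely of $\h_3$ (dim $3$), $\h_5$ (dim $5$), and $\n_{3,2}$ (dim $6$). I expect this to be the main technical obstacle: verifying completeness requires going through the finitely many low-dimensional compact Lie algebras and their representations and ruling out those which either fail irreducibility or exceed the dimension bound.

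Finally, I would enumerate all non-abelian $\n$ of dimension $\leq 6$ satisfying $d\geq 3$ or $r\geq 1$. If $d\geq 3$, since any non-abelian $2$-step nilpotent summand has dimension at least $3$, the bound $\dim\n\leq 6$ forces $d=3$ and the non-abelian part to be $\h_3$, giving $\R^3\oplus\h_3$. If $d<3$ and $r\geq 1$, I split according to $d\in\{0,1,2\}$ and list all direct sums of irreducible factors---at least one of which lies in $\{\h_3,\h_5,\n_{3,2}\}$---with total dimension $\leq 6-d$. A non-naturally-reductive irreducible factor would have dimension $\geq 3$, but the only irreducible $2$-step nilpotent Lie algebra of dimension $3$ is $\h_3$ (which is naturally reductive), so no such factor fits within the remaining dimension budget; this automatically rules out mixed configurations. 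The enumeration then produces exactly the seven algebras $\h_3$, $\R\oplus\h_3$, $\R^2\oplus\h_3$, $\h_5$, $\R\oplus\h_5$, $\h_3\oplus\h_3$, $\n_{3,2}$, which together with $\R^3\oplus\h_3$ yields the eight claimed isomorphism classes.
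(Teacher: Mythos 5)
Your proposal is correct and follows essentially the same route as the paper (and the source \cite{dBM19}): the dimension formula $\dim\mathcal K^3=\tbinom{d}{3}+r$ reduces the problem to having $d\geq 3$ or a naturally reductive irreducible de Rham factor, the latter being classified in dimension $\leq 6$ as $\h_3$, $\h_5$, $\n_{3,2}$ via Lauret's correspondence with representations of compact Lie algebras, after which the enumeration of direct sums yields exactly the eight classes. The only part left as a black box --- the completeness of the list $\{\h_3,\h_5,\n_{3,2}\}$ --- is precisely the part the paper also delegates to \cite{Lauret} and Proposition \ref{3-form_conditions}, so no gap relative to the paper's argument.
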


\begin{rem}
	By comparing the results for KY $2$ and $3$-forms on $2$-step nilpotent 
	Lie groups $\n$ we have that a $2$-step nilpotent Lie group $N$ endowed with a left-invariant metric
	$g$ which is de Rham irreducible cannot admit non-zero Killing 2-forms and non-zero
	Killing 3-forms simultaneously \cite[Proposition 5.15]{dBM19}. In other words, a naturally reductive $2$-step nilpotent Lie group endowed with a left-invariant
	metric does not admit orthogonal bi-invariant complex structures \cite[Corollary 5.16]{dBM19}.
\end{rem}

\medskip

Let us focus now on the case $p>3$. In \cite{dBM20}, the authors investigated left-invariant Killing $p$-forms of arbitrary degree on
simply connected $2$-step nilpotent Lie groups endowed with left-invariant Riemannian metrics. They also classified them when the center has dimension at most $2$.

Let us to consider first the case of a non-abelian $2$-step nilpotent Lie algebra $\n$ with a center of dimension one. It is easy to prove that in this situation, $\n$ is isomorphic to the Heisenberg Lie algebra, and the metric is encoded in a matrix $A$ such that $A :=j(\xi) \in\mathfrak{so}(\v)$, so that $j(z) =g(z, \xi)A$ for every $z\in\z$ where $\xi$ is a  unit vector in $\z$. 
The main result for this case is:

\begin{teo}\cite[Theorem 5.3]{dBM20}
	The space $\mathcal K^k(\n, \pint)$ of Killing k-forms on a $2$-step nilpotent metric Lie algebra $(\n, \pint)$ with $1$-dimensional center is zero for $k$ even and is $1$-dimensional and generated by $\xi\wedge A \wedge\dots\wedge A$ for any $k$ odd with $k\leq\dim(\n)$, where $A$ denotes the $2$-form associated to the skew-symmetric endomorphism $A$.
\end{teo}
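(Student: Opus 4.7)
The plan is to exploit the Heisenberg-type structure imposed by $\dim\z=1$: writing $\z=\R\xi$ with $|\xi|=1$ and setting $A:=j(\xi)\in\mathfrak{so}(\v)$ (invertible, as a non-trivial kernel would enlarge $\z$), every $k$-form on $\n$ decomposes uniquely as $\alpha=\alpha_0+\xi^*\wedge\alpha_1$ with $\alpha_0\in\Lambda^k\v^*$ and $\alpha_1\in\Lambda^{k-1}\v^*$. A first observation is that the Lie algebra differential vanishes on $\Lambda^*\v^*$ (since $[\v,\v]\subset\z$ and $[\z,\n]=0$) while $d\xi^*=-A$, using $A$ also for the associated $2$-form; hence $d\alpha=-A\wedge\alpha_1$. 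Using the Levi-Civita formulas \eqref{LC-j} and letting $D_A$ denote the derivation extension of $A$ to $\Lambda^*\v^*$, a direct computation gives
\[
\nabla_\xi\alpha = \tfrac{1}{2}D_A\alpha_0 + \tfrac{1}{2}\xi^*\wedge D_A\alpha_1, \qquad \nabla_x\alpha = \tfrac{1}{2}\xi^*\wedge(Ax\iprod\alpha_0) - \tfrac{1}{2}(Ax)^*\wedge\alpha_1 \quad (x\in\v).
\]

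Since $\xi\iprod d\alpha=0$, the Killing equation at $\xi$ forces $D_A\alpha_0=D_A\alpha_1=0$, so both pieces are $A$-invariant. Evaluating at $x\in\v$ and matching the $\xi^*$-components of both sides immediately yields $Ax\iprod\alpha_0=0$ for every $x$, and hence $\alpha_0=0$ by invertibility of $A$. Matching the $\v^*$-components produces the algebraic identity
\[
(k-1)(Ax)^*\wedge\alpha_1 \;=\; 2\,A\wedge(x\iprod\alpha_1), \qquad x\in\v. \qquad (\dagger)
\]
Choosing an orthonormal basis $\{e_i,f_i\}_{i=1}^n$ of $\v$ in which $A$ acts by rotation of angle $\mu_i>0$ on each plane $V_i=\mathrm{span}(e_i,f_i)$, the $A$-invariants in $\Lambda^*\v^*$ form the exterior subalgebra generated by the $2$-forms $F_i:=e^i\wedge f^i$; in particular every $A$-invariant form has even degree. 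This settles the case $k$ even: $\alpha_1=0$ and so $\alpha=0$.

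For $k=2m+1$ odd, expand $\alpha_1=\sum_{|S|=m}c_S F_S$ with $F_S=F_{i_1}\wedge\cdots\wedge F_{i_m}$; substituting $x=e_j$ in $(\dagger)$ and comparing coefficients reduces to the scalar relations
\[
m\,\mu_j\,c_T \;=\; \sum_{k\in T}\mu_k\,c_{(T\setminus\{k\})\cup\{j\}}, \qquad j\notin T.
\]
The substitution $c'_T:=c_T/\prod_{i\in T}\mu_i$ turns these into $m\,c'_T=\sum_{k\in T}c'_{(T\setminus\{k\})\cup\{j\}}$, and summing over $j\notin T$ exhibits $c'$ as a harmonic function on the (connected) Johnson graph $J(n,m)$, forcing it to be constant. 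Hence $c_T=c\prod_{i\in T}\mu_i$ and $\alpha_1\in\R\cdot A^{\wedge m}$. A short check using $\nabla_x\xi^*=-\tfrac{1}{2}(Ax)^*$ and $\xi^*\wedge\xi^*=0$ then confirms that $\xi^*\wedge A^{\wedge m}$ genuinely satisfies the Killing equation for every odd $k\leq\dim\n$ (note that invertibility of $A$ keeps $A^{\wedge m}$ non-zero precisely when $2m\leq\dim\v$, i.e.\ $k\leq\dim\n$).

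I expect the most delicate step to be the uniqueness argument for odd $k$: the identity $(\dagger)$ combined with the $A$-invariance leaves a lattice of relations among the coefficients $c_S$, and collapsing them to a one-parameter family demands either the Johnson-graph harmonic argument sketched above or, alternatively, an $\mathfrak{sl}_2$-Lefschetz argument for the symplectic form $A$ on $\v$ that identifies $A^{\wedge m}$ as the unique (up to scaling) generator of the line of $A$-invariants satisfying $(\dagger)$.
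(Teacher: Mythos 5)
Your reduction is sound: with $\dim\z=1$ the map $A=j(\xi)$ is indeed invertible (a kernel vector of $A$ would be central), the splitting $\alpha=\alpha_0+\xi^*\wedge\alpha_1$ together with $d\alpha=-A\wedge\alpha_1$ and the covariant derivative formulas from \eqref{LC-j} is correct, and the Killing equation does collapse to $D_A\alpha_0=D_A\alpha_1=0$, $\alpha_0=0$, and the identity $(\dagger)$. The existence half is also complete: $\xi^*\wedge A^{\wedge m}$ satisfies $(\dagger)$ and is non-zero exactly for odd $k\le\dim\n$. (This survey only cites \cite{dBM20} and does not reproduce the proof, but this is the same reduction the original argument rests on.)

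The gap is in how you solve the algebraic system. The claim that the $A$-invariant elements of $\Lambda^*\v^*$ form the exterior subalgebra generated by the $F_i=e^i\wedge f^i$ — and in particular that they all have even degree — is only true for a resonance-free spectrum $\{\mu_i\}$. If $\mu_i=\mu_j$ for $i\neq j$, then $e^i\wedge e^j+f^i\wedge f^j$ and $e^i\wedge f^j-f^i\wedge e^j$ are extra $A$-invariant $2$-forms outside your span; and a relation such as $\mu_1=\mu_2+\mu_3$ produces $A$-invariant forms of \emph{odd} degree (the real and imaginary parts of $u_1\wedge\bar u_2\wedge\bar u_3$, where $u_j=e^j+\sqrt{-1}\,f^j$ spans a weight space of the complexified $D_A$-action). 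This breaks the argument at both of its endpoints: for $k$ even you conclude $\alpha_1=0$ from $A$-invariance alone, which fails whenever odd-degree invariants exist; and for $k$ odd the ansatz $\alpha_1=\sum_S c_S F_S$ is not exhaustive, so the (otherwise correct) Johnson-graph harmonicity argument only gives uniqueness inside a proper subspace of $\ker D_A$. The resonant invariants must be excluded using $(\dagger)$ itself rather than $D_A$-invariance; for instance, contracting $(\dagger)$ with $y$ and symmetrizing in $x,y$ gives $(Ax)^*\wedge(y\iprod\alpha_1)+(Ay)^*\wedge(x\iprod\alpha_1)=0$ for all $x,y\in\v$, which already kills the resonant $2$-forms above and is the kind of identity needed to close the argument (your suggested $\mathfrak{sl}_2$/Lefschetz alternative for the nondegenerate $2$-form $A$ is another viable route). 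As written, neither is carried out, so the uniqueness/vanishing half of the proof is incomplete for non-generic $A$.
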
	

\

Consider now the case when the center of the $2$-step nilpotent Lie algebra $\n$ $2$-dimensional. Let ${z_1, z_2}$ be an orthonormal basis of $\z$ and denote by $A_i:=j(z_i) \in\mathfrak{so}(\v)$, for $i=1,2$. The main result here states that:

\begin{teo}\cite[Theorem 6.2]{dBM20}
	The space $\mathcal K^k(\n, \pint)$ of Killing k-forms on a $2$-step nilpotent metric Lie algebra $(\n, \pint)$ with 2-dimensional center is zero for $4 \leq k\leq \dim(\n)-1$.
\end{teo}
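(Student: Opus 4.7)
The plan is to exploit the constraint $\dim\z=2$ to reduce any putative Killing $k$-form to at most three components by $\v$-degree, and then show each component must vanish.

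First, every $k$-form decomposes as $\alpha=\alpha_{k-2}+\alpha_{k-1}+\alpha_k$, where $\alpha_l\in\Lambda^l\v^*\otimes\Lambda^{k-l}\z^*$, with components of $\v$-degree $<k-2$ vanishing because $\Lambda^p\z^*=0$ for $p>2$. Since the even and odd $\v$-degree parts of a CKY form are themselves CKY, and co-closedness respects this parity decomposition, the even and odd parts of a Killing form are again Killing. As $\alpha_{k-2}$ and $\alpha_k$ share a $\v$-parity opposite to that of $\alpha_{k-1}$, one may analyze $\eta:=\alpha_{k-1}$ and $\zeta:=\alpha_{k-2}+\alpha_k$ separately as Killing forms.

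The next step is to expand the KY equation $\nabla_X\alpha=\frac{1}{k+1}X\iprod d\alpha$ component by component. By the Levi--Civita formulas in \eqref{LC-j}, the covariant derivative involves only the maps $j(z_i)=A_i$, and the Chevalley--Eilenberg differential $d$ sends $\Lambda^l\v^*\otimes\Lambda^{k-l}\z^*$ into $\Lambda^{l-2}\v^*\otimes\Lambda^{k-l+1}\z^*$ (since $d$ is dual to the bracket, which maps $\v\wedge\v\to\z$ and vanishes on $\z$). Taking $X\in\v$ and $X\in\z$ separately and equating terms of matching bidegree should produce a system of algebraic identities involving $A_1$, $A_2$, contractions, and wedge products. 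Writing $\eta=\beta_1\wedge z_1^*+\beta_2\wedge z_2^*$ with $\beta_i\in\Lambda^{k-1}\v^*$, the equations for $\eta$ should force the $\beta_i$ to satisfy strong equivariance conditions under $A_1,A_2$ that, together with $k-1\geq 3$, collapse to $\beta_1=\beta_2=0$. The analogous analysis for $\zeta$ couples $\alpha_{k-2}=\gamma\wedge z_1^*\wedge z_2^*$ (with $\gamma\in\Lambda^{k-2}\v^*$) to $\alpha_k\in\Lambda^k\v^*$ through operators built from $A_1,A_2$; the coupling together with the purely algebraic constraints extracted from the $X\in\z$ case should force both $\gamma$ and $\alpha_k$ to vanish.

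The main obstacle is identifying this algebraic system precisely and showing that it has no nonzero solution uniformly across the range $4\leq k\leq\dim\n-1$. The boundary cases deserve individual attention: for $k=\dim\n-1$ one has $\alpha_k=0$ automatically since $\dim\v=\dim\n-2$, while for $k=4$ the sole component of one parity is $\eta\in\Lambda^3\v^*\otimes\z^*$, the smallest configuration for which the argument applies. A likely auxiliary step is a lemma describing the kernel of the coupling operator between $\alpha_{k-2}$ and $\alpha_k$ induced by the bracket, which should be the technical heart of the proof; once that kernel is controlled, the vanishing of all three $\v$-degree components of $\alpha$ follows by inserting sufficiently many independent vectors into the KY identity and playing the central and $\v$-directional components of the KY equation off each other.
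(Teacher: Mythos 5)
Your setup is the right one and matches the framework used by del Barco--Moroianu: with $\dim\z=2$ only the components $\alpha_{k-2},\alpha_{k-1},\alpha_k$ survive, the splitting by parity of $\v$-degree is legitimate (the survey records that $\mathcal K^p=\mathcal K^p_{ev}\oplus\mathcal K^p_{odd}$), and the Killing equation is to be expanded against the explicit Levi--Civita formulas \eqref{LC-j}. But two things keep this from being a proof. First, a bookkeeping error: the Chevalley--Eilenberg differential \emph{raises} $\v$-degree by $2$ and lowers $\z$-degree by $1$ --- it is dual to the bracket $\Lambda^2\v\to\z$, so it sends $\z^*$ into $\Lambda^2\v^*$ and kills $\Lambda^*\v^*$ (e.g.\ $de^3=-e^1\wedge e^2$ on $\h_3$) --- whereas you assert the opposite shift $\Lambda^l\v^*\otimes\Lambda^{k-l}\z^*\to\Lambda^{l-2}\v^*\otimes\Lambda^{k-l+1}\z^*$. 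With the wrong bidegree for $d$ the component-matching that is supposed to generate your algebraic system cannot be carried out correctly; your own parenthetical justification actually implies the correct direction.

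Second, and more seriously, the entire content of the theorem sits in the step you defer: you write that the resulting identities ``should force'' $\beta_1=\beta_2=0$ and $\gamma=\alpha_k=0$, and you yourself flag identifying and solving that system as ``the main obstacle.'' That obstacle \emph{is} the proof. The same system does admit nonzero solutions for $k=2,3$ (see the quoted \cite[Theorem 6.6]{dBM20}: $\mathcal K^2$ and $\mathcal K^3$ can be $1$- or $2$-dimensional for $2$-dimensional center) and for $k=\dim\n$ (the volume form is parallel), so any successful argument must use $4\le k\le\dim\n-1$ in an essential, quantitative way; your proposal never locates where this hypothesis enters beyond the remark that $k-1\ge 3$. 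In the original paper this gap is filled by writing the system explicitly in terms of $A_i=j(z_i)$ and their derivation extensions to $\Lambda^*\v^*$, proving auxiliary lemmas controlling the kernels of the resulting operators (using the nondegeneracy $\v=\operatorname{Im}A_1+\operatorname{Im}A_2$), and eliminating the components degree by degree; none of that is present here. As it stands, the proposal is a correct account of the strategy's first page, not a proof of the vanishing.
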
	

For KY $2$- and $3$-forms on a $2$-step nilpotent Lie algebra the authors obtained the following result:
\begin{teo}\cite[Theorem 6.6]{dBM20}
	Let $(\n, \pint)$ be a $2$-step nilpotent metric Lie algebra with $2$-dimensional center $\z$. If $(\n, \pint)$ is irreducible, then the space of Killing forms on $(\n, \pint)$ satisfies:
	\begin{itemize}
		\item $\mathcal K^1(\n, \pint) =\z$ is $2$-dimensional.
		\item $\mathcal K^2(\n, \pint)$ is 1-dimensional if $\n$ admit a bi-invariant $\pint$-orthogonal complex structure, and zero otherwise.
		\item $\mathcal K^3(\n, \pint)$ is 1-dimensional if $(\n,\pint)$ is naturally reductive, and zero otherwise.	
		\item $\mathcal K^k(\n, \pint)=0$ for $4 \leq k\leq \dim(\n)-1$.
		\item $\mathcal K^k(\n, \pint)=\Lambda^k\n$ is 1-dimensional for $k=\dim(\n)$.
	\end{itemize}
If $(\n, \pint)=(\n_1, \pint_1)\oplus(\n_2, \pint_2)$ is reducible, with $\dim\n_1\leq\dim\n_2$, then the space of KY forms on $(\n, \pint)$ satisfies:
	\begin{itemize}
	\item $\mathcal K^1(\n, \pint) =\z$ is $2$-dimensional.
	\item $\mathcal K^k(\n, \pint)=0$ if $k$ is even and  $2 \leq k\leq \dim(\n)$.
	\item If $k$ is odd and $3\leq k\leq \dim(\n)-1$, then $\mathcal K^k(\n, \pint)$ is 2-dimensional for $k\leq\dim\n_1$, 1-dimensional for $\dim\n_1< k\leq\dim\n_2$ and zero for $k>\dim\n_2$.
	\item $\mathcal K^k(\n, \pint)=\Lambda^k\n$ is 1-dimensional for $k=\dim(\n)$.
\end{itemize}
\end{teo}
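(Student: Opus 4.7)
The plan is to analyze $\mathcal K^k(\n,\pint)$ degree by degree, separately in the irreducible and the reducible cases, invoking the results collected earlier in the paper.

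In the irreducible case, the degrees $k=1,2,3$ and $k=\dim\n$ are handled directly. For $k=1$ one uses the standard fact that a Killing left-invariant $1$-form is the metric dual of a Killing vector, and on a $2$-step nilpotent metric Lie algebra $\ad_X$ skew-symmetric forces $\ad_X=0$ (being simultaneously nilpotent and skew-symmetric), so $\mathcal K^1(\n,\pint)=\z$ is $2$-dimensional by hypothesis. The case $k=2$ is exactly Proposition \ref{J bi-inv dB-M}, and $k=3$ is Proposition \ref{3-form_conditions} combined with the characterization of naturally reductive $2$-step nilpotent metric Lie algebras (Theorem 5.10 of \cite{dBM19}). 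The intermediate range $4\leq k\leq\dim\n-1$ is the content of the theorem of \cite{dBM20} stated just above the present theorem. Finally, $k=\dim\n$ is immediate since $\Lambda^{\dim\n}\n^*\cong\R$ is spanned by the volume form, which is parallel and hence Killing.

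For the reducible case I would apply Proposition 3.2 of \cite{dBM20} stated above: writing $\alpha=\sum_{l=0}^k\alpha_l$ with $\alpha_l\in\Lambda^l\n_1^*\otimes\Lambda^{k-l}\n_2^*$, the form $\alpha$ is Killing if and only if $\alpha_0\in\mathcal K^k(\n_2,\pint_2)$, $\alpha_k\in\mathcal K^k(\n_1,\pint_1)$, and each intermediate $\alpha_l$ with $1\leq l\leq k-1$ is parallel on $\n$. Because the two factors must split the $2$-dimensional center of $\n$, each $\n_i$ is either the abelian line $\R$ or a $2$-step nilpotent ideal with $1$-dimensional center, hence irreducible and Heisenberg-type. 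By Proposition 3.3 of \cite{dBM20} the only parallel forms on such an irreducible non-abelian factor are constants and the volume form, while parallel forms on $\R$ are constants and the generator of $\R^*$; consequently each intermediate $\alpha_l$ must vanish unless the bi-degree $(l,k-l)$ matches $(\dim\n_1,\dim\n_2)$, i.e.\ unless $k=\dim\n$.

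Combining this with the preceding theorem on the $1$-dimensional-center case (which gives $\mathcal K^k(\n_i,\pint_i)$ one-dimensional for odd $k\leq\dim\n_i$ and zero for even $k\geq 2$), the counting becomes straightforward: for $k$ even in $[2,\dim\n-1]$ every piece vanishes and $\mathcal K^k(\n,\pint)=0$; for $k$ odd in $[3,\dim\n-1]$ only $\alpha_0$ and $\alpha_k$ can contribute independently, giving dimension $\mathbf 1_{k\leq\dim\n_1}+\mathbf 1_{k\leq\dim\n_2}$; and the endpoints $k=1$ and $k=\dim\n$ recover the expected $2$-dimensional and $1$-dimensional spaces. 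The main obstacle in this program is the middle-degree vanishing in the irreducible case, i.e.\ the invocation of Theorem 6.2 of \cite{dBM20}: with only two independent maps $j(z_1),j(z_2)\in\mathfrak{so}(\v)$ available, excluding Killing $k$-forms for $4\leq k\leq\dim\n-1$ requires a delicate analysis of the CKY equation on each bi-degree component of $\Lambda^k\n^*$, and does not formally reduce to the $k=2,3$ arguments; everything else amounts to bookkeeping with the product decomposition and the scarcity of parallel forms on irreducible factors.
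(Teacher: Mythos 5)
Your assembly of the quoted ingredients is correct, and since the survey states this theorem without proof, the relevant comparison is with the logical structure it sets up --- which your sketch follows faithfully and which is also how the argument is organized in \cite{dBM20}: degrees $1,2,3$ and $\dim\n$ in the irreducible case via the Killing-vector argument, Proposition \ref{J bi-inv dB-M}, Proposition \ref{3-form_conditions} combined with the naturally-reductive characterization, and the parallel volume form; the middle degrees via the vanishing theorem quoted just above; and the reducible case via the de Rham splitting of Theorem \ref{deRham}, the product formula for Killing forms, the scarcity of parallel forms on irreducible factors, and the $1$-dimensional-center computation. Two remarks. First, as you honestly flag, all the analytic content for $4\le k\le\dim\n-1$ in the irreducible case is outsourced to the quoted Theorem 6.2 of \cite{dBM20}; that is acceptable in a sketch, but note that this theorem, read literally as printed in the survey, cannot hold for reducible algebras (for $\h_5\oplus\h_5$ the very statement you are proving gives a $2$-dimensional space of Killing $5$-forms), so it must implicitly carry an irreducibility restriction --- which is exactly the only place you invoke it, and your independent derivation of the reducible counts from the product decomposition is therefore not merely convenient but necessary. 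Second, your bookkeeping in the reducible case is sound: each de Rham factor is either $\R$ or an irreducible Heisenberg-type algebra with $1$-dimensional center, the intermediate bi-degree components must be wedge products of parallel forms on the factors and hence vanish below top degree, and Theorem 5.3 of \cite{dBM20} gives $\dim\mathcal K^k(\n_i,\pint_i)=1$ precisely for odd $k\le\dim\n_i$ and $0$ otherwise, uniformly for both types of factor. You also, correctly, treat $k=\dim\n$ separately; observe that the second bullet of the reducible case as printed overlaps inconsistently with the last one when $\dim\n$ is even, so its range should end at $\dim\n-1$, which is the reading your argument adopts.
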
	

\medskip

\section{Strict CKY forms on $2$-step nilpotent Lie algebras}

In this section, CKY $2$-forms on $2$-step nilpotent Lie algebras are described. The first result shows that the Heisenberg Lie algebra is the only $2$-step nilpotent one admitting strict CKY $2$-forms. It was proved  first in \cite[Theorem 4.1]{ABD}.

\begin{teo}\cite[Theorem 4.1]{ABD}
A $2$-step nilpotent Lie algebra $\mathfrak{g}$ admitting an inner product with a CKY tensor $T$ which is not KY is isomorphic to $\mathfrak{h}_{2n+1}$, and if $\xi$ generates the center of $\mathfrak{h}_{2n+1}$, then $T|_{\z}=0$ and $T|_{\mathfrak{v}}=\lambda j_{\xi}^{-1}$, for some $\lambda\neq 0$ and $\mathfrak{v}=\xi^{\perp}$
\end{teo}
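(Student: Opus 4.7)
The plan is to reduce the problem to the central extension machinery of Theorem \ref{teorema dotti-andrada} and then to use the $2$-step nilpotent hypothesis to force the associated cocycle to be symplectic.

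First I would check that the associated vector $\xi$ lies in the center of $\g$. Since $\g$ is nilpotent it is unimodular, so by the discussion following \eqref{xi} we have $\xi\in\g'$. But $\g$ is $2$-step nilpotent, hence $\g'\subset\z(\g)$, and consequently $\xi\in\z(\g)$.

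Next I would apply the converse part of Theorem \ref{teorema dotti-andrada}. This produces a decomposition $\g=\h\oplus_\mu \R\xi$ with $\h=\xi^\perp$, identifies $S:=T|_\h$ as an invertible KY tensor on $\h$ satisfying $\mu(x,y)=-2\la S^{-1}x,y\ra$, and moreover pins the center of $\g$ down to $\R\xi$.

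The key step is to observe that $\h$ must be abelian. The algebra $\g$ is non-abelian (otherwise there would be no strict CKY tensor), so $\g'$ is non-trivial; combined with $\g'\subset\z(\g)=\R\xi$ this forces $\g'=\R\xi$. The bracket on $\h$ is, by construction, the projection to $\xi^\perp$ of the $\g$-bracket, and since $[\h,\h]_\g\subset\g'=\R\xi\perp\h$ this projection vanishes, so $\h$ is abelian. With $\h$ abelian, the full bracket on $\g$ reduces to $[x,y]_\mu=\mu(x,y)\xi$ for $x,y\in\h$, with $\xi$ central. Because $S$ is invertible and skew-symmetric, $\mu$ is a non-degenerate skew-symmetric $2$-form on $\h$. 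Hence $\dim\h=:2n$ is even and $\g$ is the central extension of $\R^{2n}$ by a symplectic form, which is precisely $\h_{2n+1}$.

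Finally, to identify $T|_\v$ I would compute $j(\xi)$ directly from \eqref{jz}: for $x,y\in\v=\h$,
\[
\la j(\xi)x,y\ra=\la \xi,[x,y]_\mu\ra=\mu(x,y)|\xi|^2=-2|\xi|^2\la S^{-1}x,y\ra,
\]
so $j(\xi)=-2|\xi|^2 S^{-1}$, and hence $T|_\v=S=\lambda\, j_\xi^{-1}$ with $\lambda=-2|\xi|^2\neq 0$. The condition $T|_\z=0$ is automatic since $\z=\R\xi$ and $T\xi=0$ by \cite[Corollary 4.2]{AD}. I expect the only real subtlety to be the collapse to the abelian case: recognizing that $2$-step nilpotency, combined with Theorem \ref{teorema dotti-andrada} pinning the center down to $\R\xi$, forces $\h$ itself to be abelian, so that the entire non-abelian structure of $\g$ is encoded in the single cocycle $\mu$, which must then be symplectic.
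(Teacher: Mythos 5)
Your proof is correct, but it follows a genuinely different route from the original argument in \cite{ABD} (which this survey only cites, without reproducing the proof). The 2015 proof in \cite{ABD} is a direct analysis of the CKY equation \eqref{ckyequationliealgebra} on a $2$-step nilpotent metric Lie algebra, using the explicit Levi-Civita connection \eqref{LC-j} and the maps $j(z)$ to show by hand that the commutator collapses to $\R\xi$ and to identify $T$; it could not have used the central extension theorem, which appeared only in \cite{AD} (2018). You instead deduce the statement as a short corollary of the converse part of Theorem \ref{teorema dotti-andrada}, and the logic is sound and non-circular: that structure theorem is proved for arbitrary metric Lie algebras with $\xi\in\z$ and does not rely on the $2$-step nilpotent classification. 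Each step checks out: unimodularity gives $\xi\in\g'\subset\z$, so the converse applies; the ``moreover'' clause $\z(\g)=\R\xi$ combined with $\g'\subset\z(\g)$ forces $\g'=\R\xi$, hence the $\h$-component of $[\h,\h]$ vanishes and $\h$ is abelian; invertibility of $S$ makes $\mu$ symplectic, so $\g\cong\h_{2n+1}$; and the computation $\la j(\xi)x,y\ra=\mu(x,y)|\xi|^2=-2|\xi|^2\la S^{-1}x,y\ra$ correctly yields $T|_{\v}=\lambda j_\xi^{-1}$ with $\lambda=-2|\xi|^2$. What your approach buys is brevity and a conceptual explanation (the Heisenberg algebra is the unique $2$-step nilpotent central extension compatible with the machinery); what it costs is dependence on the heavier Theorem \ref{teorema dotti-andrada}, whereas the original proof is self-contained at the level of the $j$-map formalism.
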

Moreover, it was showed in Corollary \ref{KY en heisenberg} that there are no non-trivial KY tensors on $\h_{2n+1}$.
Recently, in  \cite{dBM21} the authors obtained the following similar result.
\begin{teo}\cite[Theorem 4.1]{dBM21}
 Every CKY $2$-form on a $2$-step nilpotent metric Lie algebra $(\mathfrak g,\pint)$ is a Killing form, except when $\mathfrak g$ is the Heisenberg Lie algebra, for every metric $\pint$. In this case  $\mathcal{ CK}^2(\mathfrak g,\pint)$ is $1$-dimensional and $\mathcal{K}^2(\mathfrak g,\pint)=0$
\end{teo}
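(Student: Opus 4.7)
The plan is to deduce this theorem almost directly from Theorem 4.1 of \cite{ABD} and Corollary \ref{KY en heisenberg}, without undertaking a fresh CKY computation on general $2$-step nilpotent Lie algebras. The two halves of the statement correspond to the two alternatives offered by the ABD classification.

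First I would address the case when $\g$ is not isomorphic to a Heisenberg algebra $\h_{2n+1}$. The contrapositive of Theorem 4.1 of \cite{ABD} says that a $2$-step nilpotent metric Lie algebra carrying a strict CKY tensor must be Heisenberg, so no such tensor exists on $(\g,\pint)$. Hence every element of $\mathcal{CK}^2(\g,\pint)$ is Killing, which is the first assertion.

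Next I would treat the Heisenberg case $\g=\h_{2n+1}$, writing $\z=\R\xi$ and $\v=\xi^\perp$. Corollary \ref{KY en heisenberg} gives $\mathcal K^2(\h_{2n+1},\pint)=0$ for any left-invariant metric, handling the Killing part. For the dimension of $\mathcal{CK}^2(\h_{2n+1},\pint)$, Theorem 4.1 of \cite{ABD} pins down every strict CKY tensor $T$ by $T|_{\z}=0$ and $T|_{\v}=\lambda j_\xi^{-1}$ for some $\lambda\neq 0$. Here $j_\xi$ is invertible on $\v$ because $\ker j_\xi=\v\cap Z(\g)=0$ for the Heisenberg algebra, so $j_\xi^{-1}$ is a well-defined skew-symmetric endomorphism of $\v$, independently of the chosen metric. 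Therefore the strict CKY tensors, together with $0$, form the $1$-dimensional subspace $\R\cdot j_\xi^{-1}\subset\Lambda^2\g^*$, and combined with $\mathcal K^2=0$ we conclude that $\mathcal{CK}^2(\h_{2n+1},\pint)=\R\cdot j_\xi^{-1}$ is $1$-dimensional.

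Conceptually no step here is hard, since the substance is packaged in the two invoked results. If I wished to avoid reliance on the ABD classification and give an intrinsic derivation, the main obstacle would be to re-prove the constraint $\g\cong\h_{2n+1}$ directly. The cleanest independent route would be to decompose $\omega\in\mathcal{CK}^2$ along the $\v$-degree splitting $\Lambda^2\g^*=\Lambda^2\v^*\oplus(\v^*\otimes\z^*)\oplus\Lambda^2\z^*$, expand the CKY equation \eqref{ckyequationliealgebra} in terms of the Koszul identities \eqref{LC-j} and the maps $j_z$, and then check that the associated $1$-form $\theta$ can be nonzero only when $\dim\z=1$ and $j_\xi$ is invertible. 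That path requires substantial case-by-case analysis, so the short deduction above is preferable for the survey.
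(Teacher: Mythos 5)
Your overall route --- deducing the statement from \cite[Theorem 4.1]{ABD} together with Corollary \ref{KY en heisenberg} --- is exactly the deduction the survey's own exposition suggests (it introduces the dBM21 theorem as a companion to those two results), and both the non-Heisenberg half and the claim $\mathcal K^2(\h_{2n+1},\pint)=0$ are handled correctly.

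There is, however, a genuine gap in the dimension count for $\mathcal{CK}^2(\h_{2n+1},\pint)$. The version of \cite[Theorem 4.1]{ABD} you invoke is only a \emph{necessary} condition: if a strict CKY tensor exists, then it has the form $T|_{\z}=0$, $T|_{\v}=\lambda j_\xi^{-1}$. Combined with $\mathcal K^2=0$ this yields $\dim\mathcal{CK}^2(\h_{2n+1},\pint)\leq 1$, but to get equality you must still show that this candidate tensor actually satisfies the CKY equation \eqref{ckyequationliealgebra} for an \emph{arbitrary} inner product on $\h_{2n+1}$; the sentence ``the strict CKY tensors, together with $0$, form the $1$-dimensional subspace $\R\cdot j_\xi^{-1}$'' silently assumes this existence. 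The missing step can be closed with material already in the survey: for any metric, $\h:=\xi^\perp$ is abelian (since $[\g,\g]=\R\xi$), so $S:=-2|\xi|^2\, j(\xi)^{-1}$ is an invertible parallel --- hence KY --- tensor on $\h$ whose associated $2$-form $\mu(x,y)=-2\la S^{-1}x,y\ra$ is automatically closed, and $\h_{2n+1}=\h\oplus_\mu\R\xi$ recovers the given metric Lie algebra; Theorem \ref{teorema dotti-andrada} then produces the required nonzero strict CKY tensor. A minor additional point: both $j_\xi$ and $\v=\xi^\perp$ depend on the chosen metric (only the invertibility argument $\ker j_\xi=\v\cap\z=0$ is metric-independent), so the phrase ``independently of the chosen metric'' should be softened.
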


\begin{rem}
For any $2$-step nilpotent Lie algebra $\mathcal{ CK}^2(\mathfrak g,\pint)=\mathcal{ K}^2(\mathfrak g,\pint)$ except when $\mathfrak g$ is the Heisenberg Lie algebra $\h_{2n+1}$, and in this case  $\mathcal{ CK}^2(\mathfrak h_{2n+1},\pint)=\mathcal{ *K}^2(\mathfrak h_{2n+1},\pint)$.	Such a description of conformal Killing $2$-forms does not hold on arbitrary metric Lie algebras. Indeed, in \cite{HO} we show examples of metric Lie algebras carriying CKY $2$-forms which are not a linear combination of a KY and a $*$-KY $2$-forms (see the $3$-step nilpotent Lie algebra $L_{5,9}$ in Table \ref{tabla_dim5_xi_no_central}).
\end{rem}

\

The case of CKY $3$-forms is more involved, it depends on the $\mathfrak v$-degree of such $3$-form (see \eqref{v-degree}). Before stating the main result, we need to introduce some notation.
Let $\n_{3,2}$ be the 6-dimensional $2$-step nilpotent Lie algebra with Lie bracket given by
\begin{equation*}
	[x_1, x_2] = y_3, \, [x_2, x_3] = y_1,\, [x_3, x_1] = y_2.
\end{equation*}
Now, we consider the 1-parameter family of metrics $\pint_\lambda$, with $\lambda\in\R_{>0}$, on $\n_{3,2}$ such that $\{ x_1, x_2,x_3,\frac{y_1}{\lambda},\frac{y_2}{\lambda},\frac{y_3}{\lambda}\}$ is a $\pint_\lambda$-orthonormal basis.

\begin{teo}\cite[Theorem 5.6]{dBM21}
	Every CKY $3$-form on a $2$-step nilpotent metric Lie algebra $(\mathfrak g,\pint)$ is a Killing form, except when $\mathfrak g=\h_{2n+1}\times\R$ and $\pint$ is any metric, and when $(\mathfrak g,\pint)$ is $(\mathfrak n_{3,2},\pint_\lambda)$ for some $\lambda\in\R_{>0}$. In both cases, $\mathcal{ CK}^3(\mathfrak g,\pint)$ is $2$-dimensional and $\mathcal{K}^3(\mathfrak g,\pint)$ is $1$-dimensional.
\end{teo}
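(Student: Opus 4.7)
The strategy mirrors the $2$-form case of \cite[Theorem 4.1]{dBM21}: analyze the CKY equation according to the $\v$-degree decomposition \eqref{v-degree}. Any CKY $3$-form splits as $\alpha=\alpha_{ev}+\alpha_{odd}$, with $\alpha_{ev}\in\Lambda^3\z^*\oplus(\Lambda^2\v^*\otimes\z^*)$ and $\alpha_{odd}\in(\v^*\otimes\Lambda^2\z^*)\oplus\Lambda^3\v^*$, and since the CKY operator on a $2$-step nilpotent metric Lie algebra preserves the $\v$-parity, each summand is itself CKY and can be examined independently.

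For each parity component I would expand $\nabla_x\alpha$ using the Levi--Civita identities \eqref{LC-j}: on a mixed $(v,z)$ pair the covariant derivative acts through $\tfrac12[\cdot,\cdot]$ and $-\tfrac12 j(z)$, so it shifts the $\v$-degree by $\pm 1$. Substituting into
\[
\nabla_x\alpha=\tfrac{1}{4}\,x\iprod\mathrm{d}\alpha-\tfrac{1}{n-2}\,x^*\wedge\mathrm{d}^*\alpha
\]
and evaluating on tuples of the form $(z,v,v,v)$, $(v,v,v,v)$, $(z,z,z,v)$ and $(z,z,v,v)$ produces a finite system of algebraic identities relating the coefficients of $\alpha$ to the family $\{j(z)\}_{z\in\z}$.

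The next step is to reduce to the $2$-form classification already available. For the odd part, contracting $\alpha_3\in\Lambda^3\v^*$ with vectors in $\z$ yields a family of $2$-forms on $\v$ inheriting a CKY-type identity, which by Theorem \ref{teoKY2nilpotente} and \cite[Theorem 4.1]{dBM21} is compatible only with a Heisenberg-type structure; tracking the coupling with the $\v^*\otimes\Lambda^2\z^*$ piece upgrades this to the exception $\mathfrak g\cong\h_{2n+1}\times\R$ with arbitrary inner product. For the even part, the $\Lambda^2\v^*\otimes\z^*$ component produces skew endomorphisms of $\v$ parametrized by $\z$ that satisfy relations reminiscent of those in Proposition \ref{3-form_conditions}; after passing to the de Rham irreducible summands via Theorem \ref{deRham}, these compatibilities single out the free $2$-step nilpotent Lie algebra $\n_{3,2}$ together with the $1$-parameter family of metrics $\pint_\lambda$.

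The main obstacle is the combinatorial reduction: because $\mathrm{d}\alpha$ and $\mathrm{d}^*\alpha$ couple different $\v$-degree components, the CKY equation becomes a tightly interlocked system, and ruling out every other irreducible $2$-step nilpotent factor requires extracting non-obvious constraints on the $j$-maps. Once the two exceptional families are isolated, writing down an explicit basis of $\mathcal{CK}^3$ and checking that precisely one generator is co-closed yields the dimensions $\dim\mathcal{CK}^3=2$ and $\dim\mathcal{K}^3=1$.
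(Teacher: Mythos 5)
The survey states this result only as a citation of \cite[Theorem 5.6]{dBM21} and does not reproduce its proof (only the remark that follows it), so your sketch cannot be checked against an argument in the text; it can, however, be checked against that remark and the machinery the survey recalls. Your overall framework --- split a CKY $3$-form by $\v$-parity as in \eqref{v-degree}, expand the CKY equation $\nabla_x\alpha=\tfrac14\,x\iprod\mathrm{d}\alpha-\tfrac{1}{n-2}\,x^*\wedge\mathrm{d}^*\alpha$ through the Levi--Civita formulas \eqref{LC-j}, and reduce to irreducible de Rham factors via Theorem \ref{deRham} --- is the right one and is consistent with how the $2$-form case is handled.

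Two concrete steps, however, would fail. First, you propose to treat the odd part by ``contracting $\alpha_3\in\Lambda^3\v^*$ with vectors in $\z$''; that contraction is identically zero, since a form in $\Lambda^3\v^*$ annihilates $\z$. The component whose contractions with central vectors produce $2$-forms on $\v$ is $\Lambda^2\v^*\otimes\z^*$, which lies in the \emph{even} part. Second, your assignment of the two exceptional families to the two parity components contradicts the remark immediately after the theorem: there $\mathcal{CK}^3(\h_{2n+1}\times\R,\pint)=\mathcal{CK}^3_{ev}(\h_{2n+1}\times\R,\pint)$, so the Heisenberg exception is produced entirely by the even component and the odd component contributes nothing, whereas you derive it from the odd part; the displayed identity for $\n_{3,2}$ likewise expresses everything through even-degree Killing and $*$-Killing forms. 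Beyond these errors, the decisive steps --- ruling out every other irreducible $2$-step nilpotent factor and isolating the one-parameter family $\pint_\lambda$ on $\n_{3,2}$ --- are asserted (``tracking the coupling \dots upgrades this'', ``these compatibilities single out'') rather than argued, and that is exactly where the content of the theorem lies. As written, the proposal is a plausible outline of a strategy, not a proof.
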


\begin{rem}
	As a consequence of the proof of the last result, it can be shown that for any $2$-step nilpotent metric Lie algebra $\mathcal{ CK}^3(\mathfrak g,\pint)=\mathcal{ K}^3(\mathfrak g,\pint)$ except when $\mathfrak g=\h_{2n+1}\times\R$ and $\pint$ is any metric, and when $(\mathfrak g,\pint)$ is $(\mathfrak n_{3,2},\pint_\lambda)$ where  
	$$\mathcal{ CK}^3(\mathfrak h_{2n+1}\times\R,\pint)=\mathcal{CK}_{ev}^3(\mathfrak h_{2n+1}\times\R,\pint)=\mathcal{K}_{ev}^3(\mathfrak h_{2n+1}\times\R,\pint)\oplus\mathcal{*K}_{ev}^3(\mathfrak h_{2n+1}\times\R,\pint)$$	
	$$\mathcal{ CK}^3(\mathfrak n_{3,2},\pint)=\mathcal{CK}_{ev}^3(\mathfrak n_{3,2},\pint)\oplus\mathcal{CK}_{odd}^3(\mathfrak n_{3,2},\pint)=\mathcal{K}_{ev}^3(\mathfrak n_{3,2},\pint)\oplus\mathcal{*K}_{ev}^3(\mathfrak n_{3,2},\pint)$$	
\end{rem}

\medskip

\section{Open problems}
In this section we will discuss interesting open problems on left-invariant CKY forms.

\subsection{CKY forms on 5-dimensional metric Lie algebras}

According to \cite[Corollary 4.4]{AD} CKY $2$-forms which are not KY only occur on odd-dimensional Lie algebras. Since the $3$-dimensional case was classified in \cite{ABD}, then the next natural step is dimension $5$. As mentioned before, in \cite{HO} we started with this case. However, we only classified 5-dimensional metric Lie algebras admitting CKY $2$-forms in the cases when the vector $\xi\in \z$ and when $\xi \perp \z$.

\

\textbf{Open question:}
Determine the existence
of CKY 2-forms on an arbitrary $5$-dimensional metric Lie algebra with 1-dimensional center, which
is not necessarily generated by $\xi$. In particular, it would be interesting to study other
metrics on those 5-dimensional Lie algebras in Table \ref{NotacionNueva} and determine if they admit a CKY 2-form with $\xi\notin\z$. 

\

\textbf{Open question:}
Are there examples of 5-dimensional metric Lie algebras with trivial center admitting CKY $2$-forms?

After that we could have a complete picture about CKY tensors on $5$-dimension metric Lie algebras.

\

\textbf{Open question:} Study CKY $p$-forms on $5$-dimensional metric Lie algebras. Using the description of CKY $2$-forms, it would be possible to analyze CKY $3$-forms by talking its $*$ dual operator. After that it remains to analyze CKY $4$-forms or equivalently, CKY vector fields.

\

\subsection{$3$-step nilpotent metric Lie algebras}
As far as we know, the only example of a $3$-step nilpotent Lie algebra carrying a CKY 2-form is $(L_{5,9},\pint_r)$ in Table \ref{tabla_dim5_xi_no_central}, the only $3$-step nilpotent Lie algebra in dimension $5$.  Moreover, the CKY 2-form is not a linear combination of a KY $2$-form and a $*$-dual of a KY $2$-form. Therefore, it would be very interesting to study its properties and behaviour.

\

\textbf{Open question:} Is it possible to generalize this example to higher dimension?

\

\subsection{Construction of CKY 2-forms}
The main tool to construct examples of metric Lie algebras admitting strict CKY $2$-forms is Theorem \ref{teorema dotti-andrada}. That result uses an invertible KY tensor $S$ such that the  $2$-form $\mu(x,y)=-2\la S^{-1}x,y\ra$ is closed to produce a CKY tensor.
So far, all known examples of tensors $S$ satisfying these condition are parallel. Then, the next natural question arise.

\

\textbf{Open question:} Let $S$ be an invertible KY tensor such that $\mu(x,y)=-2\la S^{-1}x,y\ra$ is closed. Is $S$ parallel?

\

\subsection{The compact case}
We know from Theorem \ref{bi inv dim 3} that the existence of CKY tensors on compact Lie groups equipped with a bi-invariant metric is only possible in dimension $3$, and indeed the Lie algebra is isomorphic to $\mathfrak{su}(2)$.

\

\textbf{Open question:} Do there exist examples of compact Lie groups admitting a CKY tensor with general left-invariant metric?

\

\subsection{The almost abelian case}
As we mentioned before, CKY $2$-forms on almost abelian Lie groups were studied in \cite{AD}.
They proved that only in dimension $3$ is possible to find examples of strict CKY $2$-forms.
Moreover, if it is a KY $2$-form, then it is parallel, see Theorem \ref{almost-abelian-2-forms}.

\

\textbf{Open question:} Are there invariant CKY $p$-forms with $p>2$ on almost abelian Lie groups?

\

\subsection{Homogeneous spaces}
More generally, one can consider homogeneous spaces $G/K$ endowed with a $G$-invariant metric. It is possible to translate the definition of CKY forms to $(G/K,g)$ where $g$ is a $G$-invariant metric. In this context, study $G$-invariant CKY $2$-forms on $(G/K,g)$ is equivalent to study $G$-invariant and skew-symmetric  $(1,1)$-tensors $H : T (G/K) \to T (G/K)$ such that
\begin{equation*}
	\label{cky3}
	g\left(\left(\nabla_ X H\right)Y, Z\right) + g\left(\left(\nabla_ Y H\right)X, Z\right) = 2g(X, Y )\theta(Z)- g(X, Z)\theta(Y ) -g(Y, Z)\theta(X).
\end{equation*}
In \cite{Dotti-Herrera} full flag manifolds $SU(n)/ T$ for every $n\geq 4$ are analyzed. Despite this result, and in contrast with the results we mentioned of CKY $p$-forms on Lie groups, there are not many results of CKY $p$-forms on homogeneous spaces with $G$-invariant metrics. 

\

\textbf{Open question:} Look for examples of CKY $p$-forms on homogeneous spaces. In particular, it would be interesting  to analyze the existence of CKY $p$-forms on  flags manifolds and Grasmannian manifolds.

\

\

\noindent \textbf{Acknowledgements.} We are grateful to the organization of the problem sessions during the Workshop ``Geometric structures and Moduli spaces'' in C\'ordoba where this project emerged. We also would like to thank Adri\'an Andrada for his advises and interesting suggestions, and the anonymous referees for their comments. Both authors were partially supported by CONICET, ANPCyT and SECyT-UNC 
(Argentina).

\


\begin{thebibliography}{99}\frenchspacing

	\bibitem{ABD} {\sc A. Andrada, M. L. Barberis, I. Dotti}, {\it Invariant solutions to the conformal Killing–Yano equation on Lie groups}, J. Geom. Phys. \textbf{94} (2015), 199--208.
	
	
	\bibitem{ABM} {\sc A. Andrada, M. L. Barberis, A. Moroianu},
	{\it Conformal Killing $2$-forms on $4$-dimensional manifolds} Ann. Glob. Anal. Geom. {\bf 50} (2016), 381--394.
	
	\bibitem{AD} {\sc A. Andrada, I. Dotti}, {\it Conformal Killing-Yano $2$-forms}, Differential Geom. Appl. {\bf 58} (2018), 103--119. 
	
	\bibitem{AD20} {\sc A. Andrada, I. Dotti}, {\it Killing-Yano $2$-forms on $2$-step nilpotent Lie groups}, Geom. Dedicata {\bf 212} (2021), 415--424. 
	
	\bibitem{AFV} {\sc A. Andrada, A. Fino, L. Vezzoni}, {\it A class of Sasakian $5$ manifolds}, Transf. Groups  \textbf{14} (2009),  493--512.
	
	\bibitem{BDS} {\sc M. L. Barberis, I. Dotti, O. Santill\'an}, {\it The Killing-Yano equation on Lie groups},
	Class. Quantum Grav.  \textbf{29} (2012), 065004. 10pp.
	
	\bibitem{BMS}
	{\sc F. Belgun, A. Moroianu, U. Semmelmann}, {\it Killing forms on symmetric spaces}, Differential Geom.
	Appl. \textbf{24} (2006), 215--222.
	
	
	\bibitem{BTF} {\sc C. P. Boyer, C. W. T$\phi$nnesen-Friedman}, {\it Extremal Sasakian geometry on $T^2\times S^3$ and related manifolds}, Compos. Math.  \textbf{149} (2013),  1431--1456.
	
	\bibitem{CMdNMY}
	{\sc B. Cappelletti-Montano, A. De Nicola,	J. C. Marrero,  I. Yudin}, {\it Sasakian nilmanifolds}, IMRN \textbf{15} (2014), 6648--6660.

	
	\bibitem{dBM19}
	{\sc V. del Barco, A. Moroianu}, {\it Killing forms on $2$-step nilmanifolds}, J. Geom. Anal.  \textbf{31} (2019), 863--887.
	
	\bibitem{dBM20sym}
	{\sc V. del Barco, A. Moroianu}, {\it Symmetric Killing tensors on nilmanifolds}, Bull. Soc. Math. Fr.  \textbf{148} (2020), 411--438.
	
	\bibitem{dBM20}
	{\sc V. del Barco, A. Moroianu}, {\it Higher degree Killing forms on $2$-step nilmanifolds}, J. Algebra  \textbf{563} (2020), 251--273.
		
	\bibitem{dBM21}
	{\sc V. del Barco, A. Moroianu}, {\it Conformal Killing forms on $2$-step nilpotent Riemannian Lie groups}, Forum Math. \textbf{33} (2021), 1331--1347.  
	
	\bibitem{Dotti-Herrera} 
	{\sc I. Dotti, A. Herrera}, {\it Killing-Yano 2-forms on homogeneous spaces},  Sao Paulo J. Math. Sci. https://doi.org/10.1007/s40863-018-0101-4
		
	\bibitem{Freibert}
	{\sc M. Freibert},	{\it Cocalibrated structures on Lie algebras with a codimension one Abelian ideal},	Ann. Glob. Anal. Geom. \textbf{42} (2011)
	DOI: 10.1007/s10455-012-9326-0
	
	\bibitem{Gordon} {\sc Gordon, C.}, {\it Naturally reductive homogeneous Riemannian manifolds}, Can. J. Math. 37, 467–487
	(1985)
	
	\bibitem{Herrera} {\sc A. Herrera}, {\it Parallel skew-symmetric tensors on $4$-dimensional metric Lie algebras}, to appear in Rev. Un. Mat. Argentina doi.org/10.33044/revuma.2451.
	
	\bibitem{HO} {\sc A. Herrera, M. Origlia}, {\it Invariant Conformal Killing–Yano 2-Forms
	on Five-Dimensional Lie Groups},  J. Geom. Anal. \textbf{32} (2022), 209--74.
	 
	\bibitem{Kashiwada}
	{\sc T. Kashiwada}, {\it On conformal Killing tensor}, Natur. Sci. Rep. Ochanomizu
	Univ. \textbf{19} (1968), 67--74.
	
	\bibitem{Lauret} {\sc Lauret, J.}, {\it Homogeneous nilmanifolds attached to representations of compact Lie groups}, Manuscr.	Math. \textbf{99} (1999), 287–309 
	
	\bibitem{Malcev}
	{\sc A. Malcev}, {\em On solvable Lie algebras}, Bull. Acad. Sci. URSS. Sr. Math. [Izvestia Akad.
	Nauk SSSR] \textbf{9} (1945), 329--356. 

	\bibitem{Mi} {\sc J. Milnor}, {\it Curvature of left-invariant metrics
		on Lie groups}, Adv. Math. {\bf 21} (1976), 293--329.
	
	\bibitem{Moroianu-semmelmann} 
	{\sc A. Moroianu, U. Semmelmann}, {\it Killing forms on quaternion-K\"ahler manifolds}, Ann. Global Anal.
	Geom. \textbf{28} (2005), 319--335; erratum \textbf{34} (2008), 431--432.
	
	\bibitem{Moroianu-semmelmann-08} 
	{\sc A. Moroianu, U. Semmelmann}, {\it Twistor forms on Riemannian products}, J. Geom. Phys. \textbf{58} (2008), 1343--1345.
	
	
	\bibitem{Mos75} {\sc G. Mostow}, {\it Discrete subgroup of Lie groups}, Adv. Math. {\bf 15} (1975), 112--123.
	
	\bibitem{O.Santillan} {\sc O. Santillán}, {\it Hidden symmetries and supergravity solutions}, J. Math. Phys. \textbf{53},  043509 (2012).
		
	\bibitem{Semmelmann}
	{\sc U. Semmelmann}, {\it Conformal Killing forms on Riemannian manifolds}, Math. Z. \textbf{245} (2003), 503--527.
	
	\bibitem{Stepanov} {\sc S.E. Stepanov}, {\it The vector space of conformal Killing forms on a Riemannian manifold}, J. Math. Sci. \textbf{110} (2002), 2892--2906.
	
		\bibitem{Tachibana1} 
	{\sc S. Tachibana}, { \it On  Killing tensors in a Riemannian space}, Tohoku Math. J. \textbf{20} (1969), 257--264.
	
	\bibitem{Tachibana} 
	{\sc S. Tachibana}, { \it On conformal Killing tensors on Riemannian manifolds}, Tohoku Math. J. \textbf{21} (1969), 56--64.

	
	\bibitem{Yano1}
	{\sc K. Yano}, {\it Some remarks on tensor fields and curvature}, Ann. of Math. \textbf{55}(1952), 328--347.	
\end{thebibliography}
\end{document}